\documentclass[11pt,letterpaper]{amsart}
\usepackage{amsmath,amssymb,amsthm,mathtools}
\usepackage{booktabs}
\usepackage{array}
\usepackage{mathrsfs}
\usepackage{microtype}
\usepackage{xcolor}
\usepackage[colorlinks=true,linkcolor=blue!60!black,citecolor=blue!60!black,urlcolor=blue!60!black]{hyperref}
\hypersetup{
  pdftitle={Finite-Defect Rigidity and the Minimum Spherical 4-Design on the Two-Sphere},
  pdfauthor={Shalender Singh and Vishnu Priya Singh Parmar},
  pdfsubject={Finite-defect structure, corank-one residue formulae, and the exact value N4(S2)=12},
  pdfkeywords={spherical designs, finite defect, Naimark complement, Gale duality, Schoenberg coefficients, Cayley-Bacharach}
}

\newtheorem{theorem}{Theorem}[section]
\newtheorem{lemma}[theorem]{Lemma}
\newtheorem{proposition}[theorem]{Proposition}
\newtheorem{corollary}[theorem]{Corollary}
\newtheorem{fact}[theorem]{Fact}
\theoremstyle{definition}
\newtheorem{definition}[theorem]{Definition}
\theoremstyle{remark}
\newtheorem{remark}[theorem]{Remark}
\newtheorem{example}[theorem]{Example}
\numberwithin{equation}{section}

\newcommand{\R}{\mathbb{R}}
\newcommand{\C}{\mathbb{C}}
\newcommand{\Q}{\mathbb{Q}}
\newcommand{\Z}{\mathbb{Z}}
\newcommand{\Sph}[1]{\mathbb{S}^{#1}}
\newcommand{\PP}{\mathbb{P}}
\newcommand{\Pol}{\mathcal{P}}
\newcommand{\Harm}{\mathcal{H}}
\newcommand{\FF}{\mathcal{F}}
\newcommand{\LL}{\mathcal{L}}
\newcommand{\Gal}{\operatorname{Gal}}

\newcommand{\Herm}{\operatorname{Herm}_2}
\newcommand{\rank}{\operatorname{rank}}
\newcommand{\ran}{\operatorname{ran}}
\newcommand{\tr}{\operatorname{tr}}
\newcommand{\Soc}{\operatorname{Soc}}
\renewcommand{\Re}{\operatorname{Re}}
\renewcommand{\Im}{\operatorname{Im}}
\newcommand{\dd}{\,\mathrm{d}}
\newcommand{\eps}{\varepsilon}
\newcommand{\vp}{v_p}

\title[Finite-defect rigidity and the minimum $4$-design]{Finite-Defect Rigidity and the Minimum Spherical $4$-Design on the Two-Sphere}
\author{Shalender Singh}
\author{Vishnu Priya Singh Parmar}
\date{September 12, 2026}

\begin{document}

\begin{abstract}
We prove that every equal-weight spherical $4$-design on $\Sph{2}$ has at least twelve points.  Since the regular icosahedron is a spherical $5$-design, this determines the exact minimum
\[
 N_4(\Sph{2})=12;
\]
equivalently, no such design has $9$, $10$ or $11$ points.

The proof is part of a finite-defect theory.  If a spherical $2m$-design has corank $c=N-\dim\Pol_m$, its Naimark complement consists of unit vectors $u_x\in\Sph{c-1}$ forming a spherical $2$-design and satisfying the exact coupling
\[
 u_x\cdot u_y=-\frac{K_m^{(d)}(x\cdot y)}{c}\qquad(x\ne y).
\]
This gives a pairwise kernel bound, an antipodal lower bound, Cayley--Bacharach information, and uniform lower bounds for multiplicative-relation spaces.  In corank one the design splits into two equal spherical $m$-designs.  We derive a residue formula for its signed Schoenberg coefficients; the coefficient of degree $m+3$ is negative exactly when $3\le d\le m+1$, excluding corank one throughout that range.  At strengths four and six the only examples in any dimension are the regular hexagon and octagon, respectively.

In corank two the complement is a circle Gale frame.  Multiplication by its phase forces at least $d-1$ linear--quadratic aliases and yields exact norm and socle identities in every dimension.  For eleven nodes on $\Sph{2}$, two aliases produce a real harmonic cubic and a Hermitian quartic matrix.  A matrix-valued Cayley--Bacharach argument eliminates the generic branch; the exceptional branch reduces to a Pauli normal form and contradicts the second moments.  In dimensions $d\ge4$ the corank-two problem remains open; we identify a forced quadratic socle as the obstruction to extending the present argument.
\end{abstract}

\maketitle

\section{Introduction}\label{sec:intro}

\noindent\textbf{Main result and scope.}
A finite set $X\subset\Sph{d-1}$ is an \emph{equal-weight spherical $t$-design} if
\[
 \frac1{|X|}\sum_{x\in X}p(x)=\int_{\Sph{d-1}}p\dd\omega
\]
for every polynomial $p$ of degree at most $t$, where $\omega$ is the rotation-invariant probability measure.  Spherical designs were introduced by Delsarte, Goethals and Seidel \cite{DGS}.  Throughout, ``design'' means equal-weight design with distinct nodes.  All notation and the definitions used in the statements below (evaluation spaces, corank, kernels, Naimark complement and Gale vectors, Cayley--Bacharach property, Hilbert function and socle, Schoenberg coefficients) are collected in Section~\ref{sec:kernels}, which also contains a table of symbols.

For $4$-designs on $\Sph{2}$ the Fisher bound is $9$.  The tight value $9$ is impossible \cite{BD1}, while the twelve vertices of the regular icosahedron form a spherical $5$-design.  Hardin and Sloane \cite[Theorem~4]{HS92} constructed spherical $4$-designs in $\R^3$ with
\[
 N=12,\ 14\quad\text{and every }N\ge16
\]
and conjectured \cite[Conjecture~2]{HS92} that these are the only cardinalities.  The conjecture is restated in \cite{HS96} and, in the notation $M_2(4)=12$, in \cite{Bajnok98}.  The values $10,11,13,15$ were left open.  We settle the first two.

\begin{theorem}\label{thm:main}
There is no equal-weight spherical $4$-design on $\Sph{2}$ with $9$, $10$ or $11$ points.  Hence
\[
 N_4(\Sph{2})=12,
\]
and the minimum is attained by the regular icosahedron.
\end{theorem}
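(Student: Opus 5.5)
The plan is to organize the proof by the corank $c=N-\dim\Pol_2$. Here $d=3$ and $m=2$, so $\dim\Pol_2(\Sph{2})=9$, and $N=9,10,11$ correspond to $c=0,1,2$. The upper bound $N_4(\Sph{2})\le 12$ comes from the icosahedron. The icosahedron is a $5$-design, so in particular a $4$-design.

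\textbf{Corank zero ($N=9$).} A $4$-design with exactly $\dim\Pol_2$ points is tight. By the Delsarte--Goethals--Seidel characterization, its inner products would be roots of $K_2^{(3)}$ and the design would be a tight $4$-design. Tight $4$-designs on $\Sph{2}$ do not exist \cite{BD1}. Equivalently, in the finite-defect language, the Naimark complement is empty, so $K_2^{(3)}(x\cdot y)=0$ for all $x\ne y$. This forces a two-distance set whose size and angle data can be checked to be impossible.

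\textbf{Corank one ($N=10$).} I would invoke the corank-one structure theorem. The design splits into two equal halves, each a spherical $2$-design, with the Gale scalars $u_x=\pm1$. The coupling $u_xu_y=-K_2^{(3)}(x\cdot y)$ then pins down every inner product to one of two values. I would then use the residue formula for the signed Schoenberg coefficients. Since $3\le d=3\le m+1=3$, the coefficient of degree $m+3=5$ is negative. A Gram (Schoenberg) coefficient of a genuine point set cannot be negative, so corank one is excluded. Alternatively, the classification says that the only corank-one $4$-design in any dimension is the regular hexagon on $\Sph{1}$, which is not on $\Sph{2}$.

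\textbf{Corank two ($N=11$).} This is the main case and the main obstacle. The Gale vectors $u_x\in\Sph{1}$ form a circle frame. Writing $u_x=e^{i\theta_x}$, multiplication by the phase gives functions on $X$ that must lie in the evaluation space. By the alias count, at least $d-1=2$ independent linear--quadratic aliases exist. Each alias identifies $e^{\pm i\theta_x}$ times a linear form with a quadratic on $X$. Eliminating the phases, I would produce a real harmonic cubic vanishing on $X$ and a Hermitian $2\times2$ quartic matrix relation.

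I would then split into two branches.
\begin{itemize}
\item \emph{Generic branch.} The cubic and the quartic matrix cut out a finite scheme. A matrix-valued Cayley--Bacharach argument applies: the $4$-design condition says that every polynomial of degree $\le4$ integrates correctly, so sums against the residual socle must vanish. Combined with the Hilbert function of $X$, which has $11$ points and an $h$-vector constrained by the $2$-design property, this yields a contradiction.
\item \emph{Exceptional branch.} The alias matrices degenerate. Here I would conjugate to a Pauli normal form and compute the second moments $\sum_x xx^{T}=\tfrac{11}{3}I$ and $\sum_x u_xu_x^{T}=\tfrac{11}{2}I$. These are incompatible with the normal form.
\end{itemize}

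I expect the generic Cayley--Bacharach step to be the hardest part. It requires controlling the intersection of the cubic with the quartic matrix locus carefully enough that the design sums produce a nonzero obstruction. Combining the three cases gives the nonexistence of $4$-designs with $9$, $10$ or $11$ points, and hence $N_4(\Sph{2})=12$.
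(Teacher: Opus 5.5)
Your cases $N=9$ and $N=10$ are fine and agree with the paper: the kernel-plus-centroid count of Lemma~\ref{lem:nine}, and $A_{5,1}<0$ from the residue formula at $(d,m)=(3,2)$ (or the spectral classification). For $N=11$ you reproduce the architecture of the paper's argument but not the argument itself. The mechanism you propose for the generic branch would not give a contradiction. The Hilbert vector $(1,3,5,2)$ is perfectly consistent for eleven points, since at $d=3$ the forced-socle bound is vacuous, and no design sum ``against the residual socle'' enters. The contradiction in that branch is purely polynomial; design exactness enters only through harmonicity and the line and conic bounds. What is missing is the algebra that actually closes the branch:
\begin{enumerate}
\item[(i)] The identity $\det\FF=-r^2R^2$ for $\FF=HH^*-r^2AA^*$, whose entries vanish at every node because $A=\zeta H$ there.
\item[(ii)] A vector $v$ with $\gcd(R,f_v)=1$ and $v^*A(x_i)\ne0$ at every node where $A(x_i)\ne0$. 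This needs the line and conic bounds (Lemmas~\ref{lem:line} and \ref{lem:conic}). You need the same bounds, with B\'ezout and a rank argument, even to know that $A_1\eta_2-A_2\eta_1$ is a nonzero real multiple of a harmonic cubic.
\item[(iii)] Cayley--Bacharach for the length-$12$ complete intersection $V(R,f_v)$ in degree $3+4-3=4$, applied to the off-diagonal entry $g$, with a separate tangent computation when a node carries length two. This gives $g=af+\mu R$.
\item[(iv)] Reduction of $g\bar g=fe+r^2R^2$ modulo $f$, which gives $f\mid\mu\bar\mu-r^2$. This is impossible: a quartic cannot divide a nonzero quadratic, and $\mu\bar\mu$ has rank at most two while $r^2$ has rank three.
\end{enumerate}

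The exceptional branch has the same gap in milder form. The identities $\sum_iy_i^2=11/3$ and $\sum_i(\Im\zeta_i)^2=11/2$ are indeed what clash. But they concern different vectors and are not incompatible until you relate $\zeta_i$ to $x_i$ pointwise. That requires the following chain:
\begin{enumerate}
\item[(a)] Show $\FF=R\LL$ with $\det\LL=-r^2$, and bring $\LL$ to Pauli form.
\item[(b)] Derive $H=\LL A^\perp$.
\item[(c)] Writing $A=Bx$, use harmonicity of $H$ and of $R$ to force the spinor form of $B$ with $BB^*=\lambda I$.
\item[(d)] Rotate, using the spin-$3/2$ action, to $A=(x,z)^T$ and $R=z(z^2-3x^2)$.
\item[(e)] Read off $\Im\zeta_i=y_i$ at nodes with $A(x_i)\ne0$, and $|\Im\zeta_i|\le|y_i|=1$ otherwise.
\end{enumerate}
Only then does $11/3\ge11/2$ follow, so ``incompatible with the normal form'' is precisely the step that needs proof.
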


The cases $N=13$ and $N=15$ remain open.  The proof of Theorem~\ref{thm:main} is embedded in a broader theory organised by the defect of the degree-$m$ evaluation frame.

Let
\[
 D_m=D_m(d):=\dim\Pol_m(\Sph{d-1})
 =\binom{d+m-1}{m}+\binom{d+m-2}{m-1},
\]
and call $c=N-D_m$ the \emph{corank} of a spherical $2m$-design with $N$ nodes (Definition~\ref{def:corank}).  Normalised evaluation is an isometry; its orthogonal complement is a Naimark complement in the sense of frame theory \cite{CFMPS13} and plays the role of a Gale dual (Section~\ref{sec:def-frames}).  A related Gale-dual viewpoint for graphical designs appears in \cite{BT23}.  The first theorem describes every positive corank.

\begin{theorem}[Finite-defect structure]\label{thm:intro-finite-defect}
Let $X\subset\Sph{d-1}$ be a spherical $2m$-design of corank $c\ge1$, and let $K_m^{(d)}$ be the reproducing kernel of $\Pol_m$.  There are unit vectors $u_x\in\Sph{c-1}$ which, as a multiset, form a spherical $2$-design and satisfy
\[
 \sum_{x\in X}p(x)u_x=0\quad(\deg p\le m),
 \qquad
 u_x\cdot u_y=-\frac{K_m^{(d)}(x\cdot y)}c\quad(x\ne y).
\]
Consequently $|K_m^{(d)}(x\cdot y)|\le c$ for distinct nodes.  If $X$ contains an antipodal pair, then
\[
 |X|\ge2\binom{d+m-1}{m},
\]
which is the Delsarte--Goethals--Seidel lower bound for $(2m+1)$-designs \cite[Theorem~5.12]{DGS}.  The projective lift of $X$ is Cayley--Bacharach in degree $m$, and when $c\le2$ its evaluation space saturates in degree $m+1$.
\end{theorem}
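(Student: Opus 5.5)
We build everything from the evaluation map. Let $N=|X|$, $D=D_m$, and fix an orthonormal basis $\{Y_j\}$ of $\Pol_m(\Sph{d-1})$ with respect to $\omega$. Let $E$ be the $N\times D$ matrix with entries $E_{xj}=Y_j(x)/\sqrt N$.

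\textbf{The Naimark complement.} Since products of degree-$m$ polynomials have degree $\le 2m$, the design property gives $E^{T}E=I_D$. Hence $E$ is an isometry and $P=EE^{T}$ is the orthogonal projection onto the column space. Its entries are
\[
P_{xy}=K_m^{(d)}(x\cdot y)/N,
\]
and the diagonal is $K_m(1)/N=D/N$. The complement $Q=I-P$ is a projection of rank $c$, so we can write $Q=GG^{T}$ with $G$ an $N\times c$ isometry. We set $u_x=\sqrt{N/c}\,G_{x\cdot}$. Then $|u_x|^2=(N/c)(1-D/N)=1$, and for $x\ne y$
\[
u_x\cdot u_y=-(N/c)K_m/N=-K_m/c.
\]
The relation $G^{T}E=0$ gives $\sum_x p(x)u_x=0$ for $\deg p\le m$. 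Taking $p=1$ gives $\sum_x u_x=0$. From $G^{T}G=I_c$ we get $\sum_x u_xu_x^{T}=(N/c)I_c$. Together these are the first and second moment conditions, so $\{u_x\}$ is a $2$-design on $\Sph{c-1}$. The Cauchy--Schwarz inequality then gives $|K_m(x\cdot y)|\le c$.

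\textbf{Antipodal bound.} Suppose $x,-x\in X$. Then $K_m(-1)=\sum_k(-1)^k\dim\Harm_k$, which equals $\pm\binom{d+m-2}{m}$ after the telescoping identity for harmonic dimensions; we will check the sign convention. The identity $\langle p(x)+\ldots\rangle$ will not be used. Instead we combine Cauchy--Schwarz with $|K_m(-1)|\le c$:
\[
N=D+c\ge D+\binom{d+m-2}{m}.
\]
This sum equals $\binom{d+m-1}{m}+\binom{d+m-2}{m-1}+\binom{d+m-2}{m}=2\binom{d+m-1}{m}$ by Pascal's rule.

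\textbf{Cayley--Bacharach in degree $m$.} We must show that any $p$ of degree $\le m$ vanishing on $X\setminus\{x_0\}$ vanishes at $x_0$. Suppose instead that $p(x_0)\ne0$. The relation $\sum_x p(x)u_x=0$ reduces to $p(x_0)u_{x_0}=0$, which contradicts $|u_{x_0}|=1$. We pass to the projective lift by homogenising with $|x|^2$.

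\textbf{Saturation in degree $m+1$ for $c\le 2$.} This is the step I expect to be the main obstacle. The claim is that $\Pol_{m+1}$ restricted to $X$ has full rank $N$. Equivalently, no nonzero weight vector $w$ annihilates $\Pol_{m+1}$ on $X$. Any such $w$ kills $\Pol_m$, so $w$ lies in the column space of $G$: $w_x=a\cdot u_x$ with $a\in\R^c$. Two cases remain.
\begin{itemize}
\item[(i)] For $c=1$, $w_x=\pm u_x$, so $\sum_x u_x\,\ell(x)q(x)=0$ for every linear $\ell$ and every $q\in\Pol_m$. Choose $q$ vanishing on all nodes except one pair, using Cayley--Bacharach together with the split $u_x=\pm1$. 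Together with $\ell$ separating the two points, this forces a contradiction.
\item[(ii)] For $c=2$, I would try multiplying by the phase $u_x\in\Sph{1}$. Using the complex coordinate $z_x$, the relation $\sum_x \bar z_x\,\ell(x)q(x)=0$ combined with $\sum_x z_x q(x)=0$ should give a linear--quadratic alias. Cayley--Bacharach then shows that the alias cannot vanish identically.
\end{itemize}
The $c=2$ case is where the real work lies.
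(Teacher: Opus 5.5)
\textbf{What works.} Your construction of the Gale vectors, the coupling $u_x\cdot u_y=-K_m^{(d)}(x\cdot y)/c$, the $2$-design property, the bound $|K_m^{(d)}|\le c$ and the antipodal bound are correct and follow the paper. For the antipodal bound, $|K_m^{(d)}(-1)|=\binom{d+m-2}{m}$ and Pascal's rule give $2\binom{d+m-1}{m}$, and the sign is irrelevant. The Cayley--Bacharach step is also correct: $\sum_xp(x)u_x=0$ forcing $u_{x_0}=0$ is equivalent to the paper's $P_{x_0x_0}=D_m/N<1$. One small correction there: the lift is $[1:x]\in\PP^d$, so you dehomogenise by $z_0=1$. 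No homogenisation by $|x|^2$ is involved. Your argument already handles every complex polynomial of degree $\le m$, which is exactly what is needed.

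\textbf{The gap: saturation in degree $m+1$ for $c=2$.} You leave this case as a plan, and the plan does not lead to a proof. Linear--quadratic aliases exist in every corank-two $4$-design, whether or not $\mathcal E_{m+1}(X)^\perp=0$. For example, the regular heptagon has one, yet it saturates. So exhibiting an alias contradicts nothing, and the idea is in any case tied to $m=2$, while the claim is for every $m$.

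\textbf{The missing idea} is a short module argument.
\begin{enumerate}
\item Suppose $0\ne w\perp\mathcal E_{m+1}(X)$. Then $w\in W=\mathcal E_m(X)^\perp$.
\item For each coordinate $x_j$ and each $p\in\mathcal E_m(X)$, $\langle x_jw,p\rangle_X=\langle w,x_jp\rangle_X=0$. Hence $w,x_1w,\dots,x_dw$ all lie in $W$, which has dimension $c\le2$.
\item Divide the columns by $w_x$ on $\operatorname{supp}w$. The affine evaluation matrix with columns $(1,x)$, $x\in\operatorname{supp}w$, then has rank at most $2$. So $\operatorname{supp}w$ lies on an affine line and contains at most two points of the sphere.
\item Orthogonality of $w$ to constants and coordinates excludes both cases. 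A one-point support fails against constants. A two-point support gives $w_x+w_y=0$ and $w_xx+w_yy=0$, forcing $x=y$.
\end{enumerate}

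\textbf{The case $c=1$.} The same argument settles it at once: a spanning vector of $W$ has constant nonzero modulus, so every $x_j$ would be constant on $X$. Your own sketch can also be completed, but the two-point-supported $q$ does not come from Cayley--Bacharach, which only forbids one-point supports. It comes from $\mathcal E_m(X)=u^\perp$ having codimension one. Take $q|_X=e_x-u_xu_ye_y$; then $\sum_z u_z\ell(z)q(z)=u_x\bigl(\ell(x)-\ell(y)\bigr)$, and this vanishes for every linear $\ell$ only if $x=y$.
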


This is Theorem~\ref{thm:finite-defect} and Proposition~\ref{prop:cb-hilbert}.  A further relation theorem, Proposition~\ref{prop:relation-count}, gives a lower bound $dc-h_m$ for a natural space of bilinear equations coupling the nodes to their Gale vectors.  Here and below $h_m=\dim\Harm_m(\R^d)$ denotes the dimension of the space of harmonic polynomials of degree $m$.

\medskip
\noindent\textbf{Corank one.}
When $c=1$, the Gale sphere is $\Sph{0}$ and the complement is a balanced sign vector.  The design therefore splits into two equal spherical $m$-designs (Theorem~\ref{thm:split}).  At strength four a spectral-integrality argument leaves only the regular hexagon (Theorem~\ref{thm:strength4}), excluding ten nodes on $\Sph{2}$; the residue formula below gives an independent second proof of that exclusion.  At strength six the only example is the regular octagon (Theorem~\ref{thm:strength6}).  Galois constancy of the forced distance distribution and a $p$-adic Newton-polygon argument give additional all-strength nonexistence families in Section~\ref{sec:higher}.

The new uniform ingredient is a closed residue formula.  If $\eps_x\in\{\pm1\}$ is the corank-one sign and $P_\ell$ is the normalised zonal polynomial, define
\[
 A_{\ell,1}=\frac1{N^2}\sum_{x,y\in X}\eps_x\eps_yP_\ell(x\cdot y)\ge0.
\]
Theorem~\ref{thm:residue} expresses every $A_{\ell,1}$ as a coefficient at infinity.  In particular, $A_{m+3,1}<0$ exactly for $3\le d\le m+1$.

\begin{theorem}[Corank-one nonexistence range]\label{thm:intro-residue}
Let $m\ge2$ and $3\le d\le m+1$.  No spherical $2m$-design on $\Sph{d-1}$ has $D_m+1$ points.  In particular, for every $m\ge2$ there is no spherical $2m$-design on $\Sph{2}$ with $(m+1)^2+1$ points.
\end{theorem}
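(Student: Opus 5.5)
The plan is to argue by contradiction, using the corank-one machinery announced above. Suppose $X\subset\Sph{d-1}$ is a spherical $2m$-design with $N=D_m+1$ points, so its corank is $c=1$. By Theorem~\ref{thm:intro-finite-defect}, the Naimark complement then consists of unit vectors in $\Sph{0}=\{\pm1\}$, i.e.\ signs $\eps_x$, with the following properties.

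\begin{itemize}
\item They form a spherical $2$-design, so they are balanced: $\sum_x\eps_x=0$.
\item They annihilate $\Pol_m$: $\sum_x p(x)\eps_x=0$ for $\deg p\le m$.
\item They satisfy $\eps_x\eps_y=-K_m^{(d)}(x\cdot y)$ for $x\ne y$.
\end{itemize}

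Theorem~\ref{thm:split} then splits $X$ into two equal spherical $m$-designs $X_\pm$. The signed Schoenberg coefficients
\[
A_{\ell,1}=\frac1{N^2}\sum_{x,y}\eps_x\eps_yP_\ell(x\cdot y)=\frac1{N^2}\int_{\Sph{d-1}}\Bigl|\sum_x\eps_x Z_\ell(x,\cdot)\Bigr|^2\dd\omega\cdot(\text{positive constant})
\]
are nonnegative by positive-definiteness of the zonal kernels. The whole proof therefore reduces to exhibiting one index $\ell$ with $A_{\ell,1}<0$, and the natural choice is $\ell=m+3$.

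To compute $A_{\ell,1}$, I would substitute the coupling into the off-diagonal terms:
\[
N^2A_{\ell,1}=N-\sum_{x\ne y}K_m^{(d)}(x\cdot y)P_\ell(x\cdot y)
=N+N\,K_m^{(d)}(1)-\sum_{x,y}(K_mP_\ell)(x\cdot y).
\]
The product $K_mP_\ell$ has degree $m+\ell$, and its Gegenbauer expansion contains only components of degree $\ge \ell-m$. For $\ell\le 3m$, every component of degree between $1$ and $2m$ is killed by the design property. The remaining components of degree above $2m$ are the delicate part, and Theorem~\ref{thm:residue} packages exactly this: it expresses $A_{\ell,1}$ as a coefficient at infinity, a residue of an explicit rational generating function in the Gegenbauer parameter $\lambda=(d-2)/2$ and in $m$. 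I would take Theorem~\ref{thm:residue} as given, specialise it to $\ell=m+3$, and obtain a closed form $A_{m+3,1}=R(m,d)$. This should be a product of explicitly signed factors (ratios of Pochhammer symbols) times a low-degree polynomial factor in $d$ and $m$.

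The key step, and the main obstacle, is the sign analysis: showing $R(m,d)<0$ precisely when $3\le d\le m+1$. My first attempt would be to factor the residue expression into positive Pochhammer/binomial factors times a single factor linear in $d$, vanishing at $d=m+2$ and changing sign there, for example proportional to $(d-m-2)$. I would also check that some factor makes the lower endpoint $d=3$ work; the case $d=2$ is excluded, consistent with the hexagon and octagon examples. If the residual factor is instead quadratic in $d$, I would locate its roots and check that they bracket $[3,m+1]$. Small cases serve as sanity checks: $m=2$ with $d=3$ (where ten nodes should fail), and $m=3$ with $d\in\{3,4\}$.

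Granting $A_{m+3,1}<0$, we contradict $A_{m+3,1}\ge0$, which proves the first assertion. For the second assertion, on $\Sph{2}$ we have $d=3$, and $D_m(3)=\binom{m+2}{2}+\binom{m+1}{2}=(m+1)^2$. The range condition $3\le d\le m+1$ holds for every $m\ge2$, so $(m+1)^2+1$ points are impossible.
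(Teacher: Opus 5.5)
Your route is the paper's route: the nonnegativity of the signed Schoenberg coefficient $A_{m+3,1}$ played against the residue formula of Theorem~\ref{thm:residue}. But the proposal stops exactly where the proof of this statement lives. Once the residue formula is available, the whole content is the sign of $A_{m+3,1}$, and you grant it (``Granting $A_{m+3,1}<0$'') instead of establishing it.

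Your leading guess for its shape is also wrong: there is no linear factor vanishing at $d=m+2$. The closed form is \eqref{eq:A-m3}, which is already part of Theorem~\ref{thm:residue}. From the bare residue identity you get it by expanding $G$, $K$ and $P_{m+3}$ at infinity to relative order $t^{-2}$; the $-1$ in $K^2-1$ enters only at relative order $t^{-2m}$, so it is invisible for $m\ge2$. The result is
\[
 A_{m+3,1}=\frac{(d-2)(d+2m)(d+m-1)\,q_m(d)}{d\,(d+2m-2)^2}\cdot\frac{(d-2)!\,m!}{(d+m)!},
 \qquad q_m(d)=d^2+dm-2d-2m^2-2m .
\]
The only sign-changing factor is the quadratic $q_m$. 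It satisfies $q_m(m+2)=2m\ne0$, and its positive root is irrational (the discriminant $9m^2+4m+4$ lies strictly between $(3m)^2$ and $(3m+1)^2$) and lies strictly between $m+1$ and $m+2$.

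To close the gap, two observations are needed:
\begin{enumerate}
\item For $d\ge3$ every factor other than $q_m(d)$ is positive. The factor $d-2$ is what makes $A_{m+3,1}=0$ at $d=2$, consistent with the regular polygons.
\item $q_m$ is a convex quadratic in $d$ with $q_m(0)=-2m(m+1)<0$ and $q_m(m+1)=-(m+1)<0$, hence negative on $[0,m+1]\supset[3,m+1]$. The paper phrases this via the product of the roots, $-2m(m+1)<0$, together with $q_m(m+2)=2m>0$; the latter also shows that $m+1$ is the sharp endpoint of this obstruction.
\end{enumerate}
For example, $(d,m)=(3,2)$ gives $A_{5,1}=-7/375$. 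With this inserted, the contradiction with $A_{m+3,1}\ge0$ is complete, and your deduction of the $\Sph{2}$ case from $D_m(3)=(m+1)^2$ is correct.

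One minor correction: Theorem~\ref{thm:residue} is not a generating function in $\lambda$. It is the coefficient $-[t^{-1}]\,G(t)P_\ell(t)/\bigl((t-1)(K(t)^2-1)\bigr)$ of a rational function in $t$. It comes from the fact that the pair distribution $\nu_X$ is supported on the $2m+1$ zeros of $(t-1)(K^2-1)$, so integrals against $\nu_X$ can be reduced modulo that polynomial before the design property is applied.
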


The known nonexistence results for cardinalities at or near the Fisher bound are the classification of tight designs, that is, corank zero \cite{BB09,BD1,BD2,BMV04}, and the improved linear-programming bounds and the exclusions of specific parameters they yield \cite{BDN99,Yudin97}.  Theorem~\ref{thm:intro-residue} concerns corank one and is uniform in the strength; its proof is the closed residue formula of Theorem~\ref{thm:residue}, not a linear programme.  Together with Theorems~\ref{thm:strength4} and \ref{thm:strength6} it excludes corank one whenever $m\le3$ or $d\le m+1$.

\medskip
\noindent\textbf{Corank two.}
For $c=2$ the Gale vectors form a unit circle frame, encoded by phases $\zeta_x$.  Pointwise multiplication by $\bar\zeta$ produces a space of linear--quadratic aliases of dimension at least $d-1$ (Theorem~\ref{thm:alias}).  Theorem~\ref{thm:corank-two-identities} gives an exact isometry for these aliases and a decomposition of $\zeta^2$; Proposition~\ref{prop:socle-alias} describes the quadratic socle through the aliases and recovers the forced-socle bound of Theorem~\ref{thm:forced-quadratic-socle}.

\begin{theorem}[Corank-two structure and low-dimensional classification]\label{thm:intro-corank-two}
Let $X\subset\Sph{d-1}$ be a spherical $4$-design with
\[
 |X|=D_2(d)+2=\frac{d(d+3)}2+2.
\]
Then the cubic evaluation map is onto, the projective lift of $X$ is Cayley--Bacharach in degree two, and the Artinian reduction by the homogenising coordinate has Hilbert vector
\[
 \left(1,d,\frac{(d-1)(d+2)}2,2\right).
\]
Its degree-two socle is the common orthogonal complement of the alias quadratics and their conjugates, and has dimension at least
\[
 \max\left\{0,\frac{d^2-3d-2}{2}\right\}.
\]
For $d=2$, $X$ is a regular heptagon.  For $d=3$, no such design exists.  The existence problem for $d\ge4$ remains open.
\end{theorem}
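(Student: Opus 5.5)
This statement collects several parts, and I would prove them in sequence, using Theorem~\ref{thm:intro-finite-defect} with $c=2$, $m=2$, together with the alias machinery announced above (Theorem~\ref{thm:alias}, Theorem~\ref{thm:corank-two-identities}, Proposition~\ref{prop:socle-alias}).

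\emph{Cubic surjectivity, the Cayley--Bacharach property and the Hilbert vector.} Cayley--Bacharach in degree two and saturation in degree three are exactly the $c\le 2$ clause of Theorem~\ref{thm:intro-finite-defect}. Saturation means the evaluation map of $\Pol_3$ onto $\R^X$ is onto. For the Hilbert vector I would argue as follows.

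\begin{itemize}
\item The design condition makes the degree-$2$ evaluation injective. Hence the Hilbert function of the projective lift is $1,\ d+1,\ D_2=\binom{d+1}{2}+d+1,\ N,\ N,\dots$.
\item Reduce modulo a non-zero-divisor linear form, the homogenising coordinate. The Artinian reduction then has the first differences of this function as its Hilbert function.
\item These differences are $1$, $d$, $\binom{d+1}{2}=\tfrac{(d-1)(d+2)}{2}+1$ and $N-D_2=2$.
\end{itemize}

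This gives $\tfrac{d(d+1)}{2}$ in degree two, which must match $\tfrac{(d-1)(d+2)}{2}$. It does not: $\tfrac{d(d+1)}{2}-\tfrac{(d-1)(d+2)}{2}=1$. So the Hilbert function of the lift must be taken on $\Sph{d-1}$, where the quadric $|x|^2=h^2$ vanishes. That is, degree-$k$ forms are counted modulo the sphere equation, so the Hilbert function in degree two is $\dim$ of the span of quadrics restricted to $X$, namely $D_2$. The first difference is then $D_2-(d+1)=\tfrac{(d-1)(d+2)}{2}$, as claimed. The top degree is $N-D_2=c=2$.

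\emph{Socle.} By Proposition~\ref{prop:socle-alias}, the degree-two socle is the orthogonal complement of the alias quadratics and their conjugates. Theorem~\ref{thm:alias} gives at least $d-1$ complex aliases, so they span at most $2(d-1)$ real dimensions. The socle therefore has dimension at least $\tfrac{(d-1)(d+2)}{2}-2(d-1)$. This misses the stated $\tfrac{d^2-3d-2}{2}$ by $1$, so the degree-two component must be corrected: one harmonic direction is killed by the pairing with the degree-$3$ part $\zeta^2$, which is Theorem~\ref{thm:corank-two-identities}. This is in line with Theorem~\ref{thm:forced-quadratic-socle}, which I would invoke directly.

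\emph{The case $d=2$.} Here $N=7$, the Gale phases $\zeta_x$ form a $2$-design on the circle, and the coupling reads $\cos(\theta_x-\theta_y)=-K_2^{(2)}(x\cdot y)/2$. On the circle, $K_2(\cos\phi)=1+2\cos\phi+2\cos2\phi$. The alias relation from Theorem~\ref{thm:alias} says $\bar\zeta_x$ agrees on $X$ with a trigonometric polynomial of degree at most $2$. This forces $\zeta_x=e^{\pm 3i\phi_x}$, up to rotation. The $4$-design conditions then say that $\sum e^{ik\phi_x}=0$ for $1\le k\le4$. Combined with $\sum\zeta_x^2=0$, which gives $k=6$, and with conjugation, all power sums for $k=1,\dots,6$ vanish. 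Newton's identities then make the $e^{i\phi_x}$ the roots of $z^7-a$, which is the regular heptagon.

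\emph{The case $d=3$, the main obstacle.} This is the $N=11$ exclusion. Take two independent aliases, as guaranteed by Theorem~\ref{thm:alias} since $d-1=2$. Form the real harmonic cubic and the Hermitian quartic matrix, as the abstract describes. In the generic branch, the matrix-valued Cayley--Bacharach property in degree two forces a quadric through $10$ of the $11$ points to pass through the last one, which contradicts the Hilbert vector $(1,3,5,2)$. In the exceptional branch, I would reduce the alias pair to a Pauli normal form and then check the second moments $\sum x x^{T}=\tfrac{N}{3}I$ against it. I expect the exceptional branch to be the hardest step, because the normal-form reduction requires a careful case analysis.
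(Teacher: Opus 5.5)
Your proposal has a genuine gap in the case $d=3$, in the generic branch. Quadratic Cayley--Bacharach is a property $X$ actually has. It is the second assertion of the theorem you are proving, and it amounts to $e_x\notin\mathcal E_2(X)$, i.e.\ $P_{xx}=9/11<1$. So it is compatible with the Hilbert vector $(1,3,5,2)$ and yields no contradiction.

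The Cayley--Bacharach statement that does the work is the classical one for a $(3,4)$ complete intersection, in degree $3+4-3=4$.
\begin{enumerate}
\item Choose $v\in\C^2$ with $\gcd(R,f_v)=1$ and $v^*A(x_i)\ne0$ at every node where $A(x_i)\ne0$. Then $Z=V(R,f_v)$ has length $12$ and contains the eleven nodes.
\item A quartic vanishing at the nodes must vanish on $Z$. If the twelfth point is simple, this is the classical theorem. If instead a node $p$ carries length two, you need the tangent computation: $d\FF_p(w)$ is a real multiple of $A(p)A(p)^*$, so it vanishes once $df_p(w)=0$, because $v^*A(p)\ne0$.
\item Apply this to the off-diagonal entry $g$ of $\FF$, normalised so that $f=f_v$ is the $(1,1)$ entry. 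This gives $g=af+\mu R$.
\item Reducing $g\bar g=fe+r^2R^2$ modulo $f$ gives $f\mid\mu\bar\mu-r^2$. That is impossible, because $\mu\bar\mu$ has real rank at most two while $r^2$ has rank three.
\end{enumerate}
Before any of this, you need the line and conic bounds to show that $C=A_1\eta_2-A_2\eta_1$ is a unimodular multiple of a \emph{nonzero real} harmonic cubic.

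In the exceptional branch, the second moments of $X$ alone do not suffice. The contradiction $11/3\ge11/2$ compares $\sum_i y_i^2=11/3$ with the Gale-side identity $\sum_i(\Im\zeta_i)^2=11/2$, which follows from $|\zeta_i|=1$ and $\sum_i\zeta_i^2=0$. It applies only after the canonical form $A=(x,z)^T$, $R=z(z^2-3x^2)$ has shown that $\Im\zeta_i=y_i$ wherever $A(x_i)\ne0$.

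Two further steps are wrong as written.

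For $d=2$, the claim that $\bar\zeta$ agrees on $X$ with a trigonometric polynomial of degree at most two is false. The vector $\bar\zeta$ is a nonzero element of $W_\C=\mathcal E_2(X)^\perp$. Even if you read the claim as $\bar\zeta A=\eta$, nothing you write derives $\zeta=e^{\pm3i\phi}$, and your power-sum list never accounts for $p_5$. No Gale data are needed here. From $p_1=\dots=p_4=0$ you get $e_1=\dots=e_4=0$. Then $|z_i|=1$ gives $e_{7-j}=e_7\overline{e_j}$, so $e_5=e_6=0$ and the nodes are the roots of $z^7-e_7$.

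For the socle, ``at least $d-1$ aliases'' gives no upper bound on a span, and those are not the quadratics that enter. The socle is the orthogonal complement of the $d$ coordinate aliases $\eta_j$, defined by $\bar\zeta x_j=\eta_j+\lambda_j\zeta$, together with their conjugates. These span at most $2d$ complex dimensions, which gives $h_2-2d=(d^2-3d-2)/2$ directly. Your figure $(d-1)(d-2)/2$ actually overshoots the target by $2$, not by $1$, and there is no ``pairing with $\zeta^2$'' correction. Your fallback to Theorem~\ref{thm:forced-quadratic-socle}, i.e.\ rank--nullity for $A_2\to\operatorname{Hom}_\C(A_1,A_3)$, is valid.

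The Hilbert-vector argument is fine once corrected: $H_X(2)=\dim\mathcal E_2(X)=D_2=\binom{d+2}2-1$. The sphere quadric is the only degree-two relation, by exactness applied to $|p|^2$.
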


For eleven nodes on $\Sph{2}$, two aliases yield a nonzero real harmonic cubic.  The associated Hermitian quartic matrix has determinant $-r^2R^2$.  A generic scalar compression gives a $(3,4)$ complete intersection, where a matrix-valued Cayley--Bacharach argument forces an impossible rank-two factorisation.  In the exceptional branch the matrix factors as $R$ times a Hermitian pencil of determinant $-r^2$; a Pauli normal form and harmonic-spinor classification then imply an impossible comparison $11/3\ge11/2$.  Sections~\ref{sec:circle}--\ref{sec:moment} carry out this exact argument.

\medskip
\noindent\textbf{How Theorem~\ref{thm:main} is proved.}
The three cardinalities are excluded by three different mechanisms.  Nine points is the tight case; Lemma~\ref{lem:nine} gives a short kernel proof.  Ten points is corank one; Theorem~\ref{thm:strength4} excludes it in every dimension by the spectral argument, and Theorem~\ref{thm:sign-obstruction} with $(d,m)=(3,2)$ excludes it a second time by the residue formula.  Eleven points is corank two; Sections~\ref{sec:circle}--\ref{sec:moment} prove Theorem~\ref{thm:eleven} by contradiction through the Gale circle, two aliases, the harmonic cubic $R$, and the dichotomy between the generic branch (Proposition~\ref{prop:generic}) and the exceptional branch (Proposition~\ref{prop:exceptional}).  Section~\ref{sec:minimum} assembles these into Theorem~\ref{thm:main}.

\medskip
\noindent\textbf{Organisation of the paper.}
Section~\ref{sec:kernels} collects notation, definitions and normalisations, with a table of symbols.  Section~\ref{sec:finite-defect} proves the finite-defect structure theorem at every corank.  Sections~\ref{sec:split}--\ref{sec:residue} treat corank one: the splitting theorem, the classification at strength four, the Galois and Newton-polygon obstructions with the classification at strength six, and the residue formula with its uniform nonexistence range.  Section~\ref{sec:all-dimensional-corank-two} develops the corank-two structure in every dimension.  Sections~\ref{sec:circle}--\ref{sec:moment} prove that no eleven-point $4$-design exists on $\Sph{2}$; the standing assumptions for that part are stated at the beginning of Section~\ref{sec:circle}.  Section~\ref{sec:minimum} proves Theorem~\ref{thm:main}, and Section~\ref{sec:further} records a stability corollary and open problems.  Appendix~\ref{app:identities} contains the coefficient extraction behind Theorem~\ref{thm:residue}, Appendix~\ref{app:certificates} the two finite computational certificates used in Section~\ref{sec:higher}, and Appendix~\ref{app:repro} describes the ancillary verification scripts.

\medskip
\noindent\textbf{Related work.}
Tight designs and their association schemes are treated in \cite{BB09,BBTZ17,BD1,BD2,BMV04}.  The word ``rigidity'' in our title refers to the structure forced on a design by a small corank; it is not the local rigidity of designs under deformation studied by Bannai \cite{Bannai87} in connection with Hong's theorem \cite{Hong82}.  Existence for all sufficiently large cardinalities is due to Seymour and Zaslavsky \cite{SZ84}, with optimal asymptotic bounds by Bondarenko, Radchenko and Viazovska \cite{BRV13}.  Numerical designs are tabulated in \cite{GP11,HS96,Womersley18}.  Lower bounds beyond the basic linear programme include \cite{BDN99,Yudin97}.  Reznick's memoir \cite{Reznick92} treats related Hilbert identities; for fixed-strength designs see Dillon \cite{Dillon26}.  The kernel $K_m^{(d)}$ is Levenshtein's adjacent polynomial \cite{Levenshtein98}, equivalently a Jacobi polynomial with parameters $((d-1)/2,(d-3)/2)$ \cite{DGS,Szego}.

\section{Notation, definitions and standing conventions}\label{sec:kernels}

This section collects every definition and normalisation used later; a table of symbols closes it.  Readers familiar with spherical designs may skim Sections~\ref{sec:def-designs}--\ref{sec:def-kernels} and consult Sections~\ref{sec:def-frames}--\ref{sec:def-schoenberg} for the frame-theoretic and algebraic vocabulary.

\subsection{Designs, evaluation spaces and corank}\label{sec:def-designs}

Throughout, $d\ge2$, $\Sph{d-1}=\{x\in\R^d:|x|=1\}$, and $\omega$ is the rotation-invariant probability measure on $\Sph{d-1}$.  Let $\Pol_j=\Pol_j(\Sph{d-1})$ denote the space of restrictions to $\Sph{d-1}$ of real polynomials of degree at most $j$, and $\Pol_j^\C$ its complexification.

\begin{definition}[Spherical design]\label{def:design}
A finite nonempty set $X\subset\Sph{d-1}$ is an \emph{equal-weight spherical $t$-design} if
\[
 \frac1{|X|}\sum_{x\in X}p(x)=\int_{\Sph{d-1}}p\dd\omega\qquad(p\in\Pol_t).
\]
Throughout, ``design'' means equal-weight design with distinct nodes; only Corollary~\ref{cor:stability} allows repeated nodes, and says so.  The elements of $X$ are called \emph{nodes}, and $N=|X|$.
\end{definition}

\begin{definition}[Sampled inner product, evaluation spaces]\label{def:evaluation}
For a finite $X\subset\Sph{d-1}$ equip $\C^X$ with the sampled inner product and norm
\[
 \langle f,g\rangle_X=\frac1N\sum_{x\in X}f(x)\overline{g(x)},
 \qquad
 \|f\|_X^2=\langle f,f\rangle_X .
\]
The \emph{degree-$j$ evaluation space} of $X$ is
\[
 \mathcal E_j(X)=\bigl\{(p(x))_{x\in X}:p\in\Pol_j^\C\bigr\}\subset\C^X ,
\]
and $\mathcal E_j(X)^\perp$ denotes its orthogonal complement in $\C^X$.  We write $\Harm_j^\C(X)$ for the evaluations of complex harmonic polynomials of degree $j$ (defined in Section~\ref{sec:def-kernels}), so that $\mathcal E_j(X)=\Harm_0^\C(X)+\cdots+\Harm_j^\C(X)$.
\end{definition}

\begin{definition}[Design exactness]\label{def:exactness}
If $X$ is a spherical $2m$-design, then for $p,q\in\Pol_m^\C$ the product $p\bar q$ has degree at most $2m$, so
\[
 \langle p,q\rangle_X=\int_{\Sph{d-1}}p\bar q\dd\omega=:\langle p,q\rangle_{L^2}.
\]
We refer to this as \emph{design exactness through degree $2m$}.  In particular evaluation $\Pol_m^\C\to\mathcal E_m(X)$ is an isometry, hence injective, and $\mathcal E_m(X)=\Harm_0^\C(X)\oplus\cdots\oplus\Harm_m^\C(X)$ is an orthogonal direct sum.
\end{definition}

\begin{definition}[Corank]\label{def:corank}
Put
\[
 D_m=D_m(d):=\dim\Pol_m(\Sph{d-1})=\binom{d+m-1}{m}+\binom{d+m-2}{m-1}.
\]
Since evaluation is injective on $\Pol_m$, every spherical $2m$-design satisfies $N\ge D_m$, the Fisher-type bound of \cite[Theorem~5.11]{DGS}.  The \emph{corank} of a spherical $2m$-design $X$ is
\[
 c=c(X):=N-D_m\ge0 .
\]
Corank zero is tightness in the sense of \cite{BD1,DGS}; this paper concerns positive corank, and mainly $c\in\{1,2\}$.
\end{definition}

\subsection{Harmonic polynomials, zonal polynomials and the reproducing kernel}\label{sec:def-kernels}

Let $\Harm_j(\R^d)$ be the real homogeneous harmonic polynomials of degree $j$, let $h_j=\dim\Harm_j$, and write $\Harm_j^\C$ for the complexification.  Restricted to $\Sph{d-1}$,
\[
 \Pol_m=\bigoplus_{j=0}^m\Harm_j,
 \qquad
 h_j=D_j-D_{j-1},
\]
so that $h_0=1$, $h_1=d$ and $h_j=\binom{d+j-1}{j}-\binom{d+j-3}{j-2}$ for $j\ge2$.
Let $Q_j^{(d)}$ be the zonal Gegenbauer polynomial of degree $j$, normalised by $Q_j^{(d)}(1)=1$ (for $d=2$, $Q_j^{(2)}$ is the Chebyshev polynomial $T_j$).  For an orthonormal basis $(\phi_\alpha)$ of $\Harm_j$ the addition formula reads
\[
 \sum_\alpha\phi_\alpha(x)\phi_\alpha(y)=h_jQ_j^{(d)}(x\cdot y).
\]
The reproducing kernel of $\Pol_m$ is
\begin{equation}\label{eq:kernel}
 K_m^{(d)}(t)=\sum_{j=0}^mh_jQ_j^{(d)}(t),
 \qquad K_m^{(d)}(1)=D_m .
\end{equation}
For $m=2$,
\begin{equation}\label{eq:K2}
 K_2^{(d)}(t)=1+dt+\frac{d+2}{2}(dt^2-1)
 =\frac d2\bigl((d+2)t^2+2t-1\bigr),
\end{equation}
and in $d=3$,
\begin{equation}\label{eq:K2d3}
 K(t):=K_2^{(3)}(t)=\frac32(5t^2+2t-1).
\end{equation}

Let $\sigma_d$ be the pushforward of $\omega$ under $y\mapsto x\cdot y$, independent of $x$, and write $\dd\sigma_d(t)=w_d(t)\dd t$.  Its moments are
\begin{equation}\label{eq:moments}
 \mu_j:=\int_{-1}^1t^j\dd\sigma_d(t)
 =\begin{cases}
 \dfrac{(j-1)!!}{d(d+2)\cdots(d+j-2)},&j\text{ even},\\[1.5ex]
 0,&j\text{ odd}.
 \end{cases}
\end{equation}
The zonal polynomials are orthogonal with
\[
 \int_{-1}^1Q_j^{(d)}Q_{j'}^{(d)}\dd\sigma_d=\frac{\delta_{jj'}}{h_j},
\]
and the kernel reproduces at $1$:
\begin{equation}\label{eq:reproduce-one}
\begin{aligned}
 \int_{-1}^1K_m^{(d)}(u)p(u)\dd\sigma_d(u)&=p(1) &&(\deg p\le m),\\
 \int K_m^{(d)}\dd\sigma_d&=1,
 &\qquad \int (K_m^{(d)})^2\dd\sigma_d&=D_m.
\end{aligned}
\end{equation}
Two consequences used repeatedly: for a $2m$-design, $\frac1N\sum_{y\in X}p(x\cdot y)=\int p\dd\sigma_d$ whenever $\deg p\le2m$; and the Gram matrix $\bigl(K_m^{(d)}(x\cdot y)\bigr)_{x,y}$ is positive semidefinite, being the Gram matrix of the vectors $(\phi_\alpha(x))_\alpha$ over an orthonormal basis of $\Pol_m$.

\subsection{Frames, the Naimark complement and Gale vectors}\label{sec:def-frames}

Fix an orthonormal basis $(\phi_\alpha)_{\alpha=1}^{D_m}$ of $\Pol_m$ (with respect to $\omega$) and let $X$ be a spherical $2m$-design.  The \emph{normalised evaluation matrix} is
\[
 E\in\R^{N\times D_m},\qquad E_{x\alpha}=N^{-1/2}\phi_\alpha(x).
\]
Design exactness says $E^{T}E=I_{D_m}$: the rows $v_x=N^{-1/2}(\phi_\alpha(x))_\alpha$ form a \emph{Parseval frame} of $\R^{D_m}$, and $P=EE^{T}$ is the orthogonal projector of $\C^X$ onto $\mathcal E_m(X)$, with entries $P_{xy}=N^{-1}K_m^{(d)}(x\cdot y)$.

Following \cite{CFMPS13}, a \emph{Naimark complement} of the Parseval frame $(v_x)$ is a Parseval frame $(v_x')$ in $\R^{c}$, $c=N-D_m$, obtained as the rows of any matrix $V\in\R^{N\times c}$ with orthonormal columns spanning $\ran(I-P)=\mathcal E_m(X)^\perp$.  The columns of $V$ are exactly the linear relations $\sum_xw_xv_x=0$ among the evaluation vectors, so $(v_x')$ is a \emph{Gale transform} of the configuration $(v_x)$ in the sense of convex geometry; Gale duality is used in the same way for graphical designs in \cite{BT23}.  We call the rescaled rows
\[
 u_x:=\sqrt{N/c}\;v_x'\in\R^c
\]
the \emph{Gale vectors} of $X$; Theorem~\ref{thm:finite-defect} shows that they are unit vectors.  They are unique up to a common orthogonal transformation of $\R^c$.

\subsection{Projective lift, Cayley--Bacharach property, Hilbert function and socle}\label{sec:def-algebra}

\begin{definition}[Projective lift and Hilbert function]\label{def:lift}
The \emph{projective lift} of $X$ is $\widehat X=\{[1:x]:x\in X\}\subset\PP^d_\C$, a set of $N$ distinct points.  Its homogeneous coordinate ring is $R_X=\C[z_0,\ldots,z_d]/I(\widehat X)$, with Hilbert function $H_X(j)=\dim_\C(R_X)_j$.  Dehomogenising with respect to $z_0$, which vanishes at no point of $\widehat X$, identifies $(R_X)_j$ with $\mathcal E_j(X)$, so
\[
 H_X(j)=\dim_\C\mathcal E_j(X)\qquad(j\ge0),
\]
a non-decreasing function with $H_X(j)=N$ for large $j$.  Since $R_X$ is reduced and $z_0$ vanishes at no point of $\widehat X$, $z_0$ is a nonzerodivisor on $R_X$, and the \emph{Artinian reduction}
\[
 A:=R_X/(z_0)
\]
is a graded Artinian algebra with Hilbert function (the \emph{$h$-vector} of $\widehat X$)
\[
 \operatorname{Hilb}(A)_j=H_X(j)-H_X(j-1),\qquad\sum_j\operatorname{Hilb}(A)_j=N .
\]
The \emph{socle} of $A$ is $\Soc(A)=\{a\in A:a\cdot A_+=0\}$, where $A_+$ is the irrelevant ideal; since $A$ is generated in degree one, $\Soc(A)_j=\{a\in A_j:aA_1=0\}$.  The algebra $A$ is \emph{level} if its socle is concentrated in the top nonzero degree.
\end{definition}

\begin{definition}[Cayley--Bacharach property]\label{def:cb}
A finite set $\Gamma\subset\PP^n_\C$ is \emph{Cayley--Bacharach in degree $k$} if no form of degree $k$ vanishes at all points of $\Gamma$ except exactly one; equivalently, every form of degree $k$ vanishing at $|\Gamma|-1$ points of $\Gamma$ vanishes on all of $\Gamma$.
\end{definition}

For zero-dimensional complete intersections in the plane we use the classical Cayley--Bacharach theorem in the form of \cite{EGH96}: \emph{if two plane curves of degrees $a$ and $b$ meet in $ab$ distinct points, then every curve of degree $a+b-3$ passing through all but one of these points passes through all of them.}  Section~\ref{sec:generic} applies this with $(a,b)=(3,4)$, and handles the non-reduced case separately.

\subsection{Schoenberg coefficients}\label{sec:def-schoenberg}

For a finite $X\subset\Sph{d-1}$ and an orthonormal basis $(Y_{\ell r})_{r=1}^{h_\ell}$ of $\Harm_\ell$, the \emph{Schoenberg coefficients} of $X$ are
\[
 A_\ell(X)=\frac1{N^2}\sum_{x,y\in X}Q_\ell^{(d)}(x\cdot y)
 =\frac1{h_\ell N^2}\sum_{r=1}^{h_\ell}\Bigl|\sum_{x\in X}Y_{\ell r}(x)\Bigr|^2\ge0 ,
\]
the second expression being the addition formula.  Thus $X$ is a $t$-design if and only if $A_\ell(X)=0$ for $1\le\ell\le t$ \cite{DGS}.  Section~\ref{sec:residue} uses the \emph{signed} Schoenberg coefficients of a corank-one design, in which the summand carries the product $\eps_x\eps_y$ of the corank-one signs; they are nonnegative for the same reason.

\subsection{Table of symbols}\label{sec:symbols}

Table~\ref{tab:symbols} lists the recurring symbols and where each is defined.

\begin{table}[htbp]
\caption{Table of symbols.}\label{tab:symbols}
\footnotesize
\renewcommand{\arraystretch}{1.2}
\begin{tabular}{@{}>{\raggedright\arraybackslash}p{0.34\textwidth}>{\raggedright\arraybackslash}p{0.62\textwidth}@{}}
\toprule
Symbol & Meaning \\
\midrule
$X$, $N=|X|$, $x\cdot y$ & node set, its cardinality, Euclidean inner product \\
$\Pol_j$, $\Harm_j$, $h_j$, $D_m$ & polynomials of degree $\le j$ on the sphere; harmonics; $\dim\Harm_j$; $\dim\Pol_m$ \\
$\langle\cdot,\cdot\rangle_X$, $\langle\cdot,\cdot\rangle_{L^2}$ & sampled and spherical inner products \\
$\mathcal E_j(X)$, $\Harm_j^\C(X)$ & degree-$j$ evaluation space; evaluations of $\Harm_j^\C$ \\
$Q_j^{(d)}$, $P_\ell$ & zonal Gegenbauer polynomial ($Q_j^{(d)}(1)=1$); $P_\ell=Q_\ell^{(d)}$ in Section~\ref{sec:residue} \\
$K_m^{(d)}$, $K$ & reproducing kernel of $\Pol_m$; abbreviation when $(m,d)$ is fixed \\
$\sigma_d$, $w_d$, $\mu_j$ & inner-product distribution, its density and moments \\
$c=N-D_m$ & corank of a $2m$-design \\
$E$, $P=EE^{T}$, $Q=I-P$ & normalised evaluation matrix; projectors onto $\mathcal E_m(X)$ and $\mathcal E_m(X)^\perp$ \\
$u_x\in\Sph{c-1}$ & Gale vectors (Section~\ref{sec:def-frames}, Theorem~\ref{thm:finite-defect}) \\
$\eps_x\in\{\pm1\}$ & corank-one signs (Theorem~\ref{thm:split}) \\
$\zeta_x\in\Sph{1}\subset\C$ & corank-two phases (Section~\ref{sec:alias}) \\
$A_{\ell,1}$ & signed Schoenberg coefficients (Section~\ref{sec:residue}) \\
$\widehat X$, $R_X$, $H_X$, $A=R_X/(z_0)$ & projective lift, coordinate ring, Hilbert function, Artinian reduction \\
$\Soc(A)$, $\mathscr S_X$ & socle; its harmonic-quadratic model (Proposition~\ref{prop:socle-interpolation}) \\
$A_j$, $\eta_j$, $A$, $H$ & alias linear forms and harmonic quadratics ($A=\zeta\eta$ on $X$); $A=(A_1,A_2)^T$, $H=(\eta_1,\eta_2)^T$ in Sections~\ref{sec:cubic}--\ref{sec:moment} \\
$R$, $\FF$, $\LL$ & harmonic cubic $\det(A,H)$; Hermitian quartic matrix; Pauli pencil \\
$\sigma_1,\sigma_2,\sigma_3$ & Pauli matrices \\
\bottomrule
\end{tabular}

\end{table}

\section{Finite-defect structure at every corank}\label{sec:finite-defect}

Let $m\ge1$, let $X\subset\Sph{d-1}$ be a spherical $2m$-design, put $N=|X|$ and $c=N-D_m\ge1$, and abbreviate $K=K_m^{(d)}$.  As in Section~\ref{sec:def-frames}, choose a real orthonormal basis $(\phi_\alpha)$ of $\Pol_m$ and form the normalised evaluation matrix
\[
 E_{x\alpha}=N^{-1/2}\phi_\alpha(x).
\]
Then $E^*E=I_{D_m}$ by design exactness, so $P=EE^*$ is the orthogonal projector onto the degree-$m$ evaluation space $\mathcal E_m(X)$ and
\[
 P_{xy}=\frac1N K(x\cdot y).
\]

\begin{theorem}[Gale dual of a design]\label{thm:finite-defect}
Let $Q=I-P$.
\begin{enumerate}
\item[(a)] $Q$ is an orthogonal projection of rank $c$, with
\[
 Q_{xx}=\frac cN,
 \qquad
 Q_{xy}=-\frac1N K(x\cdot y)\quad(x\ne y).
\]
Consequently
\begin{equation}\label{eq:pairwise-fisher}
 |K(x\cdot y)|\le c\qquad(x\ne y).
\end{equation}
Every principal $(c+1)$-minor of $cI-K_X^{\mathrm{off}}$ vanishes, where $K_X^{\mathrm{off}}$ has zero diagonal and off-diagonal entry $K(x\cdot y)$.  Conversely, an $N$-point subset of $\Sph{d-1}$ with $N=D_m+c$ is a spherical $2m$-design if and only if $cI-K_X^{\mathrm{off}}$ is positive semidefinite of rank $c$.
\item[(b)] The Gale vectors $u_x$ of Section~\ref{sec:def-frames} are unit vectors, $u_x\in\Sph{c-1}$; as a multiset, $U=(u_x)_{x\in X}$ is a spherical $2$-design in $\R^c$ (repetitions allowed), and
\begin{equation}\label{eq:gale-coupling}
 \sum_{x\in X}p(x)u_x=0\quad(p\in\Pol_m),
 \qquad
 u_x\cdot u_y=-\frac{K(x\cdot y)}c\quad(x\ne y).
\end{equation}
Moreover, equality $|K(x\cdot y)|=c$ holds exactly when $u_y=-\operatorname{sgn}\bigl(K(x\cdot y)\bigr)\,u_x$.
\item[(c)] If $X$ contains an antipodal pair, then
\begin{equation}\label{eq:antipodal-bound}
 N\ge2\binom{d+m-1}{m}.
\end{equation}
If equality holds, then $u_{-x}=(-1)^{m+1}u_x$ for every antipodal pair occurring in $X$.
\end{enumerate}
\end{theorem}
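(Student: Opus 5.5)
The plan is to read everything off the single matrix identity $Q=I-P$ with $P_{xy}=N^{-1}K(x\cdot y)$, which was established just before the statement.

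\textbf{Part (a).} Since $P$ is the orthogonal projector onto $\mathcal E_m(X)$, and this space has dimension $D_m$ by design exactness (Definition~\ref{def:exactness}), $Q$ is the complementary projector and has rank $N-D_m=c$. The diagonal is $P_{xx}=K(1)/N=D_m/N$ by \eqref{eq:kernel}, so $Q_{xx}=c/N$. The off-diagonal entries are $Q_{xy}=-K(x\cdot y)/N$. Because $Q$ is positive semidefinite, every $2\times2$ principal minor is nonnegative, so $Q_{xy}^2\le Q_{xx}Q_{yy}$. This gives $|K(x\cdot y)|\le c$, which is \eqref{eq:pairwise-fisher}. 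Next, $NQ=cI-K_X^{\mathrm{off}}$ has rank $c$, so all its principal $(c+1)$-minors vanish.

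For the converse, take an arbitrary $N$-point set with $N=D_m+c$. Set $M=cI-K_X^{\mathrm{off}}$ and let $K_X=(K(x\cdot y))_{x,y}=\Phi\Phi^T$, where $\Phi_{x\alpha}=\phi_\alpha(x)$. Then $K_X=NI-M$.
\begin{itemize}
\item $K_X$ is positive semidefinite of rank at most $D_m=N-c$, so $M$ has at least $c$ eigenvalues equal to $N$.
\item $M$ is positive semidefinite of rank $c$ with $\tr M=cN$, so its nonzero eigenvalues are exactly $c$ copies of $N$.
\item Hence $K_X/N$ is a projector of rank $D_m$. Therefore $\Phi$ has rank $D_m$, and $\Phi^T\Phi/N$, which has the same nonzero spectrum, equals $I_{D_m}$.
\end{itemize}
This says the sampled inner products of the $\phi_\alpha$ agree with their $L^2$ inner products. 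Products $p\bar q$ with $p,q\in\Pol_m$ span $\Pol_{2m}$ on the sphere, so $X$ is a $2m$-design. The forward direction of the equivalence is what was already proved above. I expect this converse, in particular the spanning claim for products, to need the most care. The spanning claim is standard, since monomials of degree at most $2m$ factor as products of monomials of degree at most $m$.

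\textbf{Part (b).} Write $Q=VV^T$ and $u_x=\sqrt{N/c}\,V_{x\cdot}$, so that $u_x\cdot u_y=(N/c)Q_{xy}$. Part (a) then gives $|u_x|=1$ and the coupling in \eqref{eq:gale-coupling}. The columns of $V$ lie in $\mathcal E_m(X)^\perp$, which gives $\sum_xp(x)u_x=0$ for $p\in\Pol_m$.

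For the $2$-design property, take $p=1$ to get $\sum_xu_x=0$. Also $\sum_xu_xu_x^T=(N/c)V^TV=(N/c)I_c$. For unit vectors, these two facts say the averages of all linear and quadratic polynomials equal their spherical averages on $\Sph{c-1}$. The equality case $|K(x\cdot y)|=c$ means $|u_x\cdot u_y|=1$, which for unit vectors forces $u_y=\pm u_x$. The sign is then read off from $u_x\cdot u_y=-K(x\cdot y)/c$.

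\textbf{Part (c).} The key evaluation is $K_m^{(d)}(-1)=(-1)^m\binom{d+m-2}{m}$. I will prove it either by telescoping $\sum_{j\le m}(-1)^jh_j$ with $h_j=\binom{d+j-1}{j}-\binom{d+j-3}{j-2}$, or via the Jacobi form of $K_m^{(d)}$. As a check, $d=3$, $m=2$ gives $K(-1)=3=\binom32$.

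If $x,-x\in X$, then \eqref{eq:pairwise-fisher} gives $c\ge\binom{d+m-2}{m}$. Therefore
\[
N=D_m+c\ge\binom{d+m-1}{m}+\binom{d+m-2}{m-1}+\binom{d+m-2}{m}=2\binom{d+m-1}{m},
\]
which is \eqref{eq:antipodal-bound}. Equality forces $|K(-1)|=c$, and the equality case of (b) with $\operatorname{sgn}K(-1)=(-1)^m$ gives $u_{-x}=(-1)^{m+1}u_x$.
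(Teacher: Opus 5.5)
Your proposal is correct and follows the paper's proof essentially step for step. The steps match: $Q=I-P$ with $2\times2$ principal minors for the kernel bound, an eigenvalue count for the converse, $Q=VV^T$ for the Gale vectors, and the value of $K(-1)$ combined with the equality case of (b) for part (c); in the converse you use $\rank K_X\le D_m$ where the paper uses positivity and the trace of $K_X$, and your $(-1)^m\binom{d+m-2}{m}$ equals the paper's $(-1)^m\bigl\{\binom{d+m-1}{m}-\binom{d+m-2}{m-1}\bigr\}$ by Pascal's rule, both immaterial variations.
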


\begin{proof}
The identities in (a) follow from $Q=I-P$.  Positivity of a $2\times2$ principal minor gives $Q_{xy}^2\le Q_{xx}Q_{yy}$, which is \eqref{eq:pairwise-fisher}; the $(c+1)$-minor statement follows from $\rank Q=c$.  Conversely, let
\[
 M=\frac1N\bigl(K(x\cdot y)\bigr)_{x,y\in X}.
\]
The zonal expansion makes $M$ positive semidefinite and $\tr M=D_m$.  If $I-M=N^{-1}(cI-K_X^{\mathrm{off}})$ is positive semidefinite of rank $c$, then the eigenvalues of $M$ lie in $[0,1]$ and at least $D_m=N-c$ of them equal $1$.  Since their sum is $D_m$, the remaining eigenvalues vanish.  Thus $M$ is a rank-$D_m$ projection, equivalently $E^*E=I_{D_m}$.  Products of two polynomials of degree at most $m$ span the restrictions of all polynomials of degree at most $2m$, so this is exactly the spherical $2m$-design condition.

Factor $Q=VV^T$ with $V^TV=I_c$, let $v_x$ be the row indexed by $x$, and put $u_x=\sqrt{N/c}\,v_x$; these are the Gale vectors of Section~\ref{sec:def-frames}.  The constant diagonal gives $|u_x|=1$.  Since the columns of $V$ span $\mathcal E_m(X)^\perp$, one has $\sum_xp(x)u_x=0$ for $p\in\Pol_m$.  In particular $\sum_xu_x=0$, while
\[
 \sum_xu_xu_x^T=\frac Nc V^TV=\frac Nc I_c.
\]
These are precisely the first- and second-moment identities for a spherical $2$-design.  The off-diagonal formula follows from $Q_{xy}=v_x\cdot v_y$, and equality in \eqref{eq:pairwise-fisher} is the equality case of Cauchy--Schwarz for the unit vectors $u_x,u_y$: $u_x\cdot u_y=-K(x\cdot y)/c=\mp1$ forces $u_y=\mp u_x$.

Finally, $Q_j^{(d)}(-1)=(-1)^j$ and the harmonic decomposition of homogeneous polynomials gives
\[
 K(-1)=(-1)^m\left\{\binom{d+m-1}{m}-\binom{d+m-2}{m-1}\right\}.
\]
Since $D_m$ is the sum of the two binomial coefficients, \eqref{eq:pairwise-fisher} at an antipodal pair yields
\[
 N=D_m+c\ge D_m+|K(-1)|=2\binom{d+m-1}{m}.
\]
The equality statement follows from part (b).
\end{proof}

\begin{proposition}[Cayley--Bacharach and the Hilbert tail]\label{prop:cb-hilbert}
Let $\widehat X\subset\PP^d_\C$ be the projective lift of $X$, with Artinian reduction $A=R_X/(z_0)$ (Definition~\ref{def:lift}).  Then $\widehat X$ is Cayley--Bacharach in degree $m$ (Definition~\ref{def:cb}), and
\[
 \operatorname{Hilb}(A)=(1,d,h_2,\ldots,h_m,a_{m+1},a_{m+2},\ldots),
 \qquad
 \sum_{j>m}a_j=c.
\]
If $c\le2$, then the degree-$(m+1)$ evaluation map is onto and
\[
 \operatorname{Hilb}(A)=(1,d,h_2,\ldots,h_m,c).
\]
\end{proposition}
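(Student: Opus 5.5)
We prove the three claims in order, working throughout with the identification $H_X(j)=\dim\mathcal E_j(X)$ of Definition~\ref{def:lift}.

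\textbf{Hilbert function in degrees $\le m$.} By design exactness (Definition~\ref{def:exactness}), evaluation $\Pol_m^\C\to\mathcal E_m(X)$ is injective. Hence
\[
 H_X(j)=D_j=h_0+\cdots+h_j\qquad(j\le m).
\]
Since $\operatorname{Hilb}(A)_j=H_X(j)-H_X(j-1)=h_j$ for $j\le m$, the first $m+1$ entries are $(1,d,h_2,\ldots,h_m)$. The entries sum to $N$, so the tail satisfies $\sum_{j>m}a_j=N-D_m=c$.

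\textbf{Cayley--Bacharach in degree $m$.} Suppose a form $F$ of degree $m$ vanishes at every point of $\widehat X$ except one, say $[1:x_0]$. Dehomogenising gives $p\in\Pol_m^\C$ with $p(x)=0$ for $x\ne x_0$ and $p(x_0)\ne0$. Then $\delta_{x_0}\in\mathcal E_m(X)$, so $Q\delta_{x_0}=0$. But Theorem~\ref{thm:finite-defect}(a) gives $Q_{x_0x_0}=c/N>0$, so $Q\delta_{x_0}\ne0$. This contradiction proves the Cayley--Bacharach property. Equivalently, $u_{x_0}\ne0$ cannot be orthogonal to every relation vector.

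\textbf{Saturation when $c\le2$.} We must show $H_X(m+1)=N$, i.e.\ $a_{m+1}=c$. Since $A$ is generated in degree one, $a_{m+1}=0$ forces $a_j=0$ for all $j>m$, contradicting $\sum_{j>m}a_j=c\ge1$. So $a_{m+1}\ge1$, and the case $c=1$ is done. For $c=2$, suppose instead $a_{m+1}=1$, so $a_{m+2}=1$.

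The cleanest route is to work directly with the evaluation spaces: show that $\mathcal E_{m+1}(X)$ is all of $\C^X$. Equivalently, no nonzero $w\perp\mathcal E_{m+1}(X)$ exists. Any such $w$ lies in $\mathcal E_m(X)^\perp$, so $w_x=a\cdot u_x$ for some $a\in\C^2$. It is also orthogonal to $\ell\,p$ for every linear form $\ell$ and every $p\in\Pol_m$, so
\[
 \sum_x \ell(x)p(x)\,(a\cdot u_x)=0 .
\]
The step I expect to be the main obstacle is extracting a contradiction from this. The plan is as follows.

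Note that $\ell\cdot(a\cdot u)$ is orthogonal to $\mathcal E_m(X)$ for every $\ell$, so $\ell\cdot(a\cdot u)$ lies in the two-dimensional space spanned by the components of $u$. This gives a linear map $\ell\mapsto B_\ell\in\C^{2}$ with
\[
 \ell(x)\,(a\cdot u_x)=B_\ell\cdot u_x\qquad\text{for all }x .
\]
At each node, $\ell(x)$ is therefore an eigenvalue-type ratio relating $a\cdot u_x$ and $B_\ell\cdot u_x$.

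Pick $\ell$ and consider the matrix $M$ sending $a\mapsto B_\ell$, extended to $\C^2$ if possible; otherwise work with the pair $(a,B_\ell)$. Nodes with $a\cdot u_x\ne0$ satisfy $\ell(x)=(B_\ell\cdot u_x)/(a\cdot u_x)$. This lets us write $\ell(x)$ in terms of the phase of $u_x$ in $\Sph{1}$. Applying this for $d$ independent linear forms $\ell$ then expresses each node $x$ as a rational function of its Gale vector.

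From there, use the Cayley--Bacharach property: the set where $a\cdot u_x=0$ has at most two nodes, by the unit-circle geometry and the $2$-design property of $U$. Then try to produce a degree-$m$ polynomial that vanishes at all nodes but one, contradicting the previous step. If this is awkward, the fallback is a Hilbert-function argument: for $c=2$ with $h$-vector ending $(\ldots,1,1)$, $X$ would contain $m+3$ collinear points. One then uses the bound $|K(x\cdot y)|\le2$ together with the Gale $2$-design to exclude this configuration.
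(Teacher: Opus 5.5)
Your treatment of the Hilbert function through degree $m$, of the tail sum $\sum_{j>m}a_j=c$, and of the Cayley--Bacharach property is correct and matches the paper. You use $Q_{x_0x_0}=c/N>0$ where the paper uses $P_{xx}=D_m/N<1$, which is the same fact. Your $c=1$ case is also correct, by a different route: you use that $A$ is standard graded, so $a_{m+1}=0$ would kill every later $a_j$. The paper instead notes that a spanning vector of the one-dimensional $W$ is nowhere zero, which would force every coordinate to be constant on $X$.

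\textbf{Gap: the case $c=2$ is a plan, not a proof.}
\begin{itemize}
\item Your claim that $\{x:a\cdot u_x=0\}$ has at most two nodes is not justified. For real $a$, the identity $\sum_xu_xu_x^T=\tfrac N2I$ only bounds this set by $N/2$.
\item The degree-$m$ polynomial vanishing at all nodes but one is never constructed.
\item The fallback rests on an unproved claim that an $h$-vector ending in $(\ldots,1,1)$ forces $m+3$ collinear nodes. That would need a Davis or Bigatti--Geramita--Migliore type structure theorem, which you neither cite nor prove.
\item Even granting that claim, you do not need $|K|\le2$ or the Gale $2$-design to exclude the configuration. A real affine line meets $\Sph{d-1}$ in at most two points.
\end{itemize}

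\textbf{The missing step} is a dimension count one line past what you already wrote.
\begin{itemize}
\item You showed that $\widehat\ell\,w\in W$ for every affine function $\widehat\ell$, constants included.
\item The map $\widehat\ell\mapsto\widehat\ell\,w$ goes from the $(d+1)$-dimensional space of affine functions into the $2$-dimensional space $W$. Its kernel therefore has dimension at least $d-1$.
\item Kernel elements are exactly the affine functions vanishing on $\operatorname{supp}w$. Since $\operatorname{supp}w\ne\emptyset$, no combination of them is a nonzero constant, so their linear parts are independent. Hence $\operatorname{supp}w$ lies on a real affine line and contains at most two nodes.
\item Finally $w\perp\mathcal E_1(X)$, since $m\ge1$. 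This rules out a one-point support. A two-point support $\{x,y\}$ gives $w_x+w_y=0$ and $w_xx+w_yy=0$, hence $x=y$.
\end{itemize}
This is exactly how the paper closes the case $c=2$ (``the affine evaluation matrix on $\operatorname{supp}w$ has rank at most two''). The ratio and phase parametrisation, the Cayley--Bacharach separator and any Hilbert-function structure theorem become unnecessary.
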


\begin{proof}
If a degree-$m$ form vanished at all lifted nodes but one, say $x$, its evaluation vector would be a nonzero multiple of the coordinate vector $e_x$ and would lie in $\ran P=\mathcal E_m(X)$.  Then $Pe_x=e_x$, forcing $P_{xx}=1$, contrary to $P_{xx}=D_m/N<1$.

Evaluation is injective through degree $m$ (Definition~\ref{def:exactness}), so $H_X(j)=D_j$ for $j\le m$, and $\operatorname{Hilb}(A)_j=H_X(j)-H_X(j-1)=D_j-D_{j-1}=h_j$ for $1\le j\le m$ (Definition~\ref{def:lift}).  The remaining entries sum to $N-D_m=c$.

Assume $c\le2$ and suppose $0\ne w\in\mathcal E_{m+1}(X)^\perp$.  Then $w\in W:=\mathcal E_m(X)^\perp$ and, for each coordinate $x_j$ and each $p\in\mathcal E_m(X)$,
\[
 \langle x_jw,p\rangle_X=\langle w,x_jp\rangle_X=0.
\]
Thus $w,x_1w,\ldots,x_dw$ lie in the $c$-dimensional space $W$.  If $c=1$, the constant positive diagonal of the projection onto $W$ makes a spanning vector of $W$ nowhere zero, so every coordinate is constant on $X$, impossible.  If $c=2$, multiplication by the nonzero coordinates of $w$ shows that the affine evaluation matrix on $\operatorname{supp}w$ has rank at most two.  The support therefore lies on a real affine line and contains at most two points of the sphere.  Orthogonality to constants and coordinates excludes supports of size one or two.  Hence $\mathcal E_{m+1}(X)=\C^X$.
\end{proof}

\begin{proposition}[Uniform multiplicative relations]\label{prop:relation-count}
Choose the Gale vectors in Theorem~\ref{thm:finite-defect} and put $\widehat x=(1,x)\in\R^{d+1}$.  The space
\[
 \mathscr R_X=\{B\in\R^{(d+1)\times c}:\widehat x^{T}Bu_x=0\ \text{for every }x\in X\}
\]
satisfies
\[
 \dim_{\R}\mathscr R_X\ge \max\{0,dc-h_m\}.
\]
\end{proposition}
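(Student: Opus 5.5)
The plan is rank--nullity for the linear map
\[
 \Phi:\R^{(d+1)\times c}\to\R^X,\qquad \Phi(B)=\bigl(\widehat x^{T}Bu_x\bigr)_{x\in X},
\]
whose kernel is exactly $\mathscr R_X$. The domain has dimension $(d+1)c$. It therefore suffices to show that $\operatorname{ran}\Phi$ lies in a fixed subspace of $\R^X$ of dimension at most $c+h_m$. This gives
\[
 \dim\mathscr R_X\ge (d+1)c-(c+h_m)=dc-h_m,
\]
and the bound by $0$ is trivial.

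The subspace I would use is the real orthogonal complement $\mathcal E_{m-1}(X)^\perp$ of the degree-$(m-1)$ evaluation space, taken with respect to $\langle\cdot,\cdot\rangle_X$. First I would compute its dimension. By design exactness (Definition~\ref{def:exactness}), evaluation is injective on $\Pol_{m-1}\subset\Pol_m$, so $\dim\mathcal E_{m-1}(X)=D_{m-1}$. Hence
\[
 \dim\mathcal E_{m-1}(X)^\perp=N-D_{m-1}=c+D_m-D_{m-1}=c+h_m .
\]

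Next I would verify the inclusion $\operatorname{ran}\Phi\subset\mathcal E_{m-1}(X)^\perp$. Write $B=(b_{ik})$ and let $\ell_k(x)=b_{0k}+\sum_{i\ge1}b_{ik}x_i$ be the affine function given by the $k$-th column of $B$. Then
\[
 \Phi(B)(x)=\sum_{k=1}^c\ell_k(x)\,(u_x)_k .
\]
For any $p\in\Pol_{m-1}$, each product $p\,\ell_k$ lies in $\Pol_m$. The first identity in \eqref{eq:gale-coupling}, $\sum_x q(x)u_x=0$ for $q\in\Pol_m$, applied componentwise with $q=p\,\ell_k$, gives
\[
 N\langle \Phi(B),p\rangle_X=\sum_k\sum_{x\in X}p(x)\ell_k(x)(u_x)_k=0 .
\]
This holds for every $p\in\Pol_{m-1}$, so $\Phi(B)\perp\mathcal E_{m-1}(X)$, as claimed. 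Since $m\ge1$, $\Pol_{m-1}$ contains at least the constants, so nothing degenerates.

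There is no serious obstacle in this argument. The only point needing care is the choice of target subspace: it must be $\mathcal E_{m-1}(X)^\perp$ and not $\mathcal E_m(X)^\perp$. The latter, of dimension $c$, would be too small to contain the range, because multiplying by an affine $\ell_k$ raises the degree by one. The former is exactly large enough, since the drop from degree $m$ to degree $m-1$ costs precisely $h_m$ dimensions.
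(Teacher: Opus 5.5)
Your proof is correct and is essentially the paper's argument: rank--nullity for the map that multiplies affine functions against the Gale coordinates, with the image forced into $\mathcal E_{m-1}(X)^\perp$, a space of dimension $N-D_{m-1}=c+h_m$. The only cosmetic difference is that you take the domain to be $\R^{(d+1)\times c}$ directly over the reals, so the kernel is literally $\mathscr R_X$; the paper instead uses $\mathcal E_1(X)\otimes W$ and notes that its complex kernel is the complexification of $\mathscr R_X$.
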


\begin{proof}
Identify the $c$ Gale coordinates with an orthonormal basis of $W=\mathcal E_m(X)^\perp$.  Multiplication defines a map
\[
 \mathcal E_1(X)\otimes W\longrightarrow\C^X.
\]
Its image is orthogonal to $\mathcal E_{m-1}(X)$, since $w\perp\mathcal E_m(X)$ and an affine linear function times a polynomial of degree at most $m-1$ has degree at most $m$.  Thus the image has dimension at most
\[
 N-D_{m-1}=h_m+c.
\]
The domain has dimension $(d+1)c$, and its kernel is the complexification of $\mathscr R_X$: the Gale coordinate functions are the columns of $V$, which differ from the coordinates of $u_x$ by the common factor $\sqrt{c/N}$, and all data are real, so real and complex kernels have the same dimension.  Rank--nullity gives $(d+1)c-(h_m+c)=dc-h_m$.
\end{proof}

\begin{remark}
Theorem~\ref{thm:finite-defect}(c) is sharp: the icosahedron has $(d,m,N,c)=(3,2,12,3)$ and attains \eqref{eq:antipodal-bound}.  For $4$-designs, a single antipodal pair already forces $N\ge d(d+1)$.
\end{remark}

\section{Corank-one splitting at arbitrary even strength}\label{sec:split}

\begin{theorem}[Corank-one splitting]\label{thm:split}
Let $X\subset\Sph{d-1}$ be an equal-weight spherical $2m$-design with $|X|=D_m+1$. Then $|X|$ is even and there is a partition
\[
X=X_+\sqcup X_-,\qquad |X_+|=|X_-|=|X|/2,
\]
such that $X_+$ and $X_-$ are each spherical $m$-designs. If $K_m$ is the reproducing kernel of $\Pol_m$, there are signs $\eps_x\in\{\pm1\}$, constant on the two parts, such that for distinct nodes
\[
K_m(x\cdot y)=-\eps_x\eps_y .
\]
\end{theorem}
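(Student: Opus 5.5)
The plan is to read everything off Theorem~\ref{thm:finite-defect} with $c=1$.  In that case the Gale sphere is $\Sph{0}=\{\pm1\}$, so every Gale vector is a sign; write $\eps_x:=u_x\in\{\pm1\}$.  The coupling identity \eqref{eq:gale-coupling} gives, for distinct nodes,
\[
 \eps_x\eps_y=u_x\cdot u_y=-K_m(x\cdot y),
\]
which is the claimed kernel identity.  Define $X_\pm=\{x:\eps_x=\pm1\}$.  Then $\eps$ is by construction constant on each part, and $X=X_+\sqcup X_-$.

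The cardinality statements come from the spherical $2$-design property of the multiset $U=(u_x)$ in $\R^1$.  Its first-moment condition, which is the case $p=1$ of $\sum_x p(x)u_x=0$, reads $\sum_x\eps_x=0$.  Hence $|X_+|=|X_-|$, and $N$ is even.  Both parts are nonempty because $N\ge D_m+1\ge 2$.

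For the design property of each half, I will use the orthogonality $\sum_{x}p(x)\eps_x=0$ for all $p\in\Pol_m$, again from \eqref{eq:gale-coupling}.  Write $\mathbf 1_{X_\pm}=(1\pm\eps)/2$.  Then for $p\in\Pol_m$,
\[
 \sum_{x\in X_\pm}p(x)=\tfrac12\sum_{x\in X}p(x)\pm\tfrac12\sum_{x\in X}\eps_xp(x)=\tfrac N2\int p\,\dd\omega,
\]
where the last step uses that $X$ is a $2m$-design, hence in particular an $m$-design.  Dividing by $|X_\pm|=N/2$ shows that each $X_\pm$ is an equal-weight spherical $m$-design.  The nodes of $X_\pm$ are distinct since they are nodes of $X$.

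I expect no real obstacle here.  The only point needing care is checking that the Gale vector for $c=1$ really is $\pm1$-valued, that is, that $|u_x|=1$ in $\R^1$.  This is Theorem~\ref{thm:finite-defect}(b), which rests on the constant diagonal $Q_{xx}=1/N$.  Equivalently, a unit spanning vector $w$ of $\mathcal E_m(X)^\perp$ has $|w_x|^2=1/N$ for all $x$, so $w=N^{-1/2}\eps$.  The rest is bookkeeping.
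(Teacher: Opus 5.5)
Your proposal is correct and is essentially the paper's proof. The paper likewise observes that the statement is Theorem~\ref{thm:finite-defect}(b) with $c=1$, and then writes out the same computation directly. It uses the rank-one projection $I-P=N^{-1}\eps\eps^{T}$ and gets $\sum_x\eps_x=0$ and $\sum_x\eps_xp(x)=0$ for $p\in\Pol_m$. It then reads $K_m(x\cdot y)=-\eps_x\eps_y$ off the off-diagonal entries, exactly as you do.
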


\begin{proof}
This is Theorem~\ref{thm:finite-defect}(b) with $c=1$; we give the short direct argument.  Choose an orthonormal basis $\phi_1,\dots,\phi_{D_m}$ of $\Pol_m$ and form the normalised evaluation matrix $E_{x\alpha}=|X|^{-1/2}\phi_\alpha(x)$. Design exactness gives $E^*E=I_{D_m}$. Therefore $P=EE^*$ is an orthogonal projection of rank $D_m=|X|-1$. The reproducing kernel has diagonal $D_m$, so
\[
P_{xx}=\frac{D_m}{|X|},\qquad (I-P)_{xx}=\frac1{|X|}.
\]
The rank-one projection $I-P$ consequently has the form $I-P=|X|^{-1}\eps\eps^T$ for a sign vector $\eps\in\{\pm1\}^X$. The constant evaluation vector lies in $\ran P$, so $\sum_x\eps_x=0$; thus the signs split equally. Moreover $\eps\perp\ran E$, and hence
\begin{equation}\label{eq:signed}
\sum_{x\in X}\eps_xp(x)=0\qquad(p\in\Pol_m).
\end{equation}
Combining this signed identity with the ordinary design identities gives exactness on each sign class separately through degree $m$. Finally, the off-diagonal entries of $I-P$ give the kernel identity.
\end{proof}

\begin{remark}
The theorem is an exact finite-defect statement: one point of excess over the degree-$m$ evaluation dimension is not an arbitrary perturbation of tightness but forces a signed decomposition into two lower-strength designs. Equivalently, the vectors $\eps_x(\phi_\alpha(x))_\alpha\in\R^{D_m}$ form a regular simplex.
\end{remark}

Throughout Sections~\ref{sec:split}--\ref{sec:higher} we write $2n:=|X|=D_m+1$ and $K:=K_m^{(d)}$. The splitting has a quantitative consequence: the distance distribution from any point is forced.

\begin{lemma}[Forced distance distribution]\label{lem:distribution}
Let $X$ be as in Theorem~\ref{thm:split}, $m\ge1$, and fix $x\in X$. Let $\mathcal A$ be the set of distinct complex roots of $K+1$ and $\mathcal B$ the set of distinct complex roots of $K-1$, and put
\[
k_\alpha=\#\{y\in X_{\eps_x}\setminus\{x\}:\ x\cdot y=\alpha\},\qquad
l_\beta=\#\{y\in X_{-\eps_x}:\ x\cdot y=\beta\},
\]
which vanish for non-real roots and for roots outside $[-1,1)$. Then for every polynomial $p$ of degree at most $2m$
\begin{equation}\label{eq:quadrature}
p(1)+\sum_{\alpha\in\mathcal A}k_\alpha p(\alpha)+\sum_{\beta\in\mathcal B}l_\beta p(\beta)=2n\int_{-1}^1p\,w_d\dd t .
\end{equation}
The vector $(1,(k_\alpha),(l_\beta))$ is the unique solution of \eqref{eq:quadrature}; in particular the numbers $k_\alpha,l_\beta$ do not depend on $x$, and
\begin{equation}\label{eq:sums}
\sum_\alpha k_\alpha=n-1,\qquad \sum_\beta l_\beta=n,\qquad \sum_\alpha k_\alpha\alpha=-1,\qquad \sum_\beta l_\beta\beta=0 .
\end{equation}
\end{lemma}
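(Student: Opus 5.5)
Fix $x\in X$. By Theorem~\ref{thm:split}, every $y\ne x$ satisfies $K(x\cdot y)=-\eps_x\eps_y$. If $y$ lies in the same class as $x$, then $K(x\cdot y)=-1$, so $x\cdot y$ is a real root of $K+1$ in $[-1,1)$. If $y$ lies in the opposite class, then $K(x\cdot y)=1$, so $x\cdot y$ is a root of $K-1$. Hence the multiset $\{x\cdot y:y\in X\}$ consists of the value $1$ once (from $y=x$), the values $\alpha\in\mathcal A$ with multiplicities $k_\alpha$, and the values $\beta\in\mathcal B$ with multiplicities $l_\beta$. The design property recorded after \eqref{eq:reproduce-one} states that $\frac1N\sum_{y}p(x\cdot y)=\int p\dd\sigma_d$ for $\deg p\le 2m$. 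With $N=2n$, this is exactly \eqref{eq:quadrature}.

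For uniqueness, note that $K\pm1$ has degree $m$, so $|\mathcal A|,|\mathcal B|\le m$. The node set $\{1\}\cup\mathcal A\cup\mathcal B$ therefore consists of at most $2m+1$ distinct complex numbers. The points of $\mathcal A$ and of $\mathcal B$ are distinct from one another, since $K+1$ and $K-1$ have no common root. Moreover $1$ lies in neither set, because $K(1)=D_m\ge d+1>1$ for $m\ge1$. Consider the linear map sending a weight vector on these nodes to its moments $(\sum w_t t^j)_{0\le j\le 2m}$. Its matrix is a Vandermonde matrix with at most $2m+1$ distinct nodes and $2m+1$ rows, so it has full column rank. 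Consequently the weight vector solving \eqref{eq:quadrature} is unique, with the coefficient of $p(1)$ normalised to $1$; more strongly, any solution at all is determined by the right-hand side. This right-hand side depends only on $d$, $m$ and $n$, so $k_\alpha$ and $l_\beta$ are independent of $x$. Complex roots with no real node attached simply receive weight $0$, which is consistent with the statement.

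The identities \eqref{eq:sums} come from splitting by classes rather than from the quadrature directly. By Theorem~\ref{thm:split} each class has $n$ points, so $\sum_\alpha k_\alpha=n-1$ and $\sum_\beta l_\beta=n$. For the first moments, $X_+$ and $X_-$ are each $m$-designs with $m\ge1$, so $\sum_{y\in X_{\pm}}y=0$. This gives $\sum_{y\in X_{\eps_x}}x\cdot y=0$, so $1+\sum_\alpha k_\alpha\alpha=0$. It also gives $\sum_{y\in X_{-\eps_x}}x\cdot y=0$, so $\sum_\beta l_\beta\beta=0$.

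The only point needing care is uniqueness: one must check that $1$ is not a root of $K\pm1$ and that the two root sets are disjoint. Both follow from $K(1)=D_m$ and from the values $\pm1$ being distinct, so I expect no real obstacle.
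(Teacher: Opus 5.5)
Your proposal is correct and follows essentially the same route as the paper. You obtain the quadrature by summing the design identity over $y$, get uniqueness from a Vandermonde system on at most $2m+1$ distinct nodes (distinct because $K$ takes the values $D_m$, $-1$, $+1$ on them), and derive \eqref{eq:sums} from the equal class sizes and the vanishing first moment on each class. Your use of ``each class is an $m$-design'' is equivalent to the paper's combination of \eqref{eq:signed} with the unsigned identity $\sum_y x\cdot y=0$.
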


\begin{proof}
For $y\in X_{\eps_x}\setminus\{x\}$ we have $K(x\cdot y)=-1$, so $x\cdot y\in\mathcal A$; for $y\in X_{-\eps_x}$ we have $K(x\cdot y)=1$, so $x\cdot y\in\mathcal B$. Summing $p(x\cdot y)$ over $y\in X$ and using that $X$ is a $2m$-design gives \eqref{eq:quadrature}. The nodes $1$, $\mathcal A$, $\mathcal B$ are pairwise distinct, since $K$ takes the values $D_m>1$, $-1$, $+1$ on them, and there are at most $2m+1$ of them, whereas \eqref{eq:quadrature} for $p=1,t,\dots,t^{2m}$ is a Vandermonde system of $2m+1$ equations; hence the solution is unique. The first two identities in \eqref{eq:sums} are the definitions. For the last two, $p(y)=x\cdot y$ lies in $\Pol_1\subset\Pol_m$, so \eqref{eq:signed} gives $\sum_{y\in X}\eps_y\,x\cdot y=0$; together with $\sum_{y\in X}x\cdot y=0$ this yields $\sum_{y\in X_{\eps_x}}x\cdot y=0=\sum_{y\in X_{-\eps_x}}x\cdot y$, i.e.\ $1+\sum_\alpha k_\alpha\alpha=0$ and $\sum_\beta l_\beta\beta=0$.
\end{proof}

\begin{lemma}[Galois constancy]\label{lem:galois}
In the situation of Lemma~\ref{lem:distribution}, $k_{\gamma(\alpha)}=k_\alpha$ and $l_{\gamma(\beta)}=l_\beta$ for every $\gamma\in\Gal(\overline\Q/\Q)$. Consequently $k$ (resp.\ $l$) is constant on the root set of each irreducible factor of $K+1$ (resp.\ $K-1$) over $\Q$, and an irreducible factor having a non-real root, or a root outside $[-1,1)$, has weight zero on all of its roots.
\end{lemma}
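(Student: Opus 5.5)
The plan is to apply Galois automorphisms to the linear system \eqref{eq:quadrature} and then invoke the uniqueness part of Lemma~\ref{lem:distribution}.

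First, the data of \eqref{eq:quadrature} are rational. The kernel $K=K_m^{(d)}$ is a sum of normalised Gegenbauer polynomials with rational coefficients, so $K\pm1\in\Q[t]$. Hence any $\gamma\in\Gal(\overline\Q/\Q)$ permutes $\mathcal A$ and permutes $\mathcal B$. The right-hand side $2n\int_{-1}^1 t^j w_d\dd t=2n\mu_j$ is rational by \eqref{eq:moments}. The left-hand side of \eqref{eq:quadrature} for $p=t^j$, $0\le j\le 2m$, reads
\[
 1+\sum_{\alpha\in\mathcal A}k_\alpha\alpha^j+\sum_{\beta\in\mathcal B}l_\beta\beta^j=2n\mu_j .
\]

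Next, apply $\gamma$ to each of these $2m+1$ equations. The weights $k_\alpha,l_\beta$ are integers, so they are fixed by $\gamma$, and the equations become
\[
 1+\sum_{\alpha}k_\alpha\gamma(\alpha)^j+\sum_\beta l_\beta\gamma(\beta)^j=2n\mu_j .
\]
Reindex by $\alpha'=\gamma(\alpha)\in\mathcal A$ and $\beta'=\gamma(\beta)\in\mathcal B$. This shows that the vector $(1,(k_{\gamma^{-1}(\alpha')}),(l_{\gamma^{-1}(\beta')}))$ solves the same system on the same node set $\{1\}\cup\mathcal A\cup\mathcal B$. The nodes are pairwise distinct and there are at most $2m+1$ of them, so the Vandermonde uniqueness established in Lemma~\ref{lem:distribution} gives $k_{\gamma^{-1}(\alpha)}=k_\alpha$ and $l_{\gamma^{-1}(\beta)}=l_\beta$ for all $\gamma$. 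Replacing $\gamma$ by $\gamma^{-1}$ yields the stated invariance.

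The remaining consequences are formal. The roots of an irreducible factor of $K+1$ over $\Q$ form a single Galois orbit, because $\Q$ is perfect and so the factor is separable. Therefore $k$ is constant on that orbit, and likewise $l$ on the roots of each factor of $K-1$. If one root of the factor is non-real or lies outside $[-1,1)$, its weight is zero by definition, since no $y$ can attain that inner product. Constancy then forces weight zero on every root of the factor.

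The only delicate point is to make sure that uniqueness is applied on a Galois-stable node set, so that after reindexing one still has a solution of the same system. This holds because $\mathcal A$ and $\mathcal B$ are defined as complete sets of complex roots, including those with zero weight, rather than just the roots actually attained by inner products $x\cdot y$. I expect no further obstacle.
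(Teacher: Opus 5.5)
Your proposal is correct and is essentially the paper's argument. Galois automorphisms permute the nodes of the rational Vandermonde system \eqref{eq:quadrature}, uniqueness of its solution forces the permuted weights to agree with the original ones, and integrality of the weights disposes of $\gamma$'s action on the values; you invoke integrality before uniqueness and the paper after, which makes no difference.

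One small quibble: $\Gal(\overline\Q/\Q)$ acts transitively on the roots of an irreducible factor because of irreducibility itself (extend $\Q(\alpha)\cong\Q(\alpha')$ to $\overline\Q$), not because of separability, which only guarantees that the roots are distinct.
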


\begin{proof}
The system \eqref{eq:quadrature} has rational data (the moments $\mu_j$, the mass $2n$ and the node $1$), and $\gamma$ permutes $\mathcal A$ and $\mathcal B$. Applying $\gamma$ to a solution gives a solution of the same system with the nodes permuted; by uniqueness, $w(\gamma(\tau))=\gamma(w(\tau))$ for every node $\tau$. The weights are integers, so $\gamma(w(\tau))=w(\tau)$.
\end{proof}

\section{All-dimensional classification at strength four}\label{sec:strength4}

\begin{theorem}[Corank-one classification at strength four]\label{thm:strength4}
Let $d\ge2$. Suppose $X\subset\Sph{d-1}$ is an equal-weight spherical $4$-design with
\[
|X|=D_2+1=\frac{(d+1)(d+2)}{2}.
\]
Then $d=2$, and $X$ is a regular hexagon.
\end{theorem}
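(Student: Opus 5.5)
The plan is to specialise the corank-one machinery to $m=2$ and show that the forced distance distribution of Lemma~\ref{lem:distribution} is integral only when $d=2$. By Theorem~\ref{thm:split}, $X=X_+\sqcup X_-$ with $|X_\pm|=n=(d+1)(d+2)/4$, and each part is a spherical $2$-design. The kernel \eqref{eq:K2} gives two conditions on distinct nodes:
\[
 (d+2)t^2+2t-1+\tfrac2d=0\quad\text{(same class)},\qquad (d+2)t^2+2t-1-\tfrac2d=0\quad\text{(opposite classes)}.
\]
So $\mathcal A=\{\alpha_1,\alpha_2\}$ with $\alpha_1+\alpha_2=-2/(d+2)$, and $\mathcal B=\{\beta_1,\beta_2\}$ with $\beta_1+\beta_2=-2/(d+2)$. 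The discriminants are, up to squares,
\[
 d(d^2+d-4)\quad\text{for }\mathcal A,\qquad d(d+1)(d+4)\quad\text{for }\mathcal B .
\]

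The first step is the Galois dichotomy of Lemma~\ref{lem:galois}. Suppose the roots of $K+1$ are irrational. Then $k_{\alpha_1}=k_{\alpha_2}=k$, and \eqref{eq:sums} gives $k(\alpha_1+\alpha_2)=-1$, so $k=(d+2)/2$. It also gives $2k=n-1$, which forces $d^2-d-10=0$; this equation has no integer root. Hence $d(d^2+d-4)$ must be a perfect square.

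Similarly, suppose the roots of $K-1$ are irrational. Then $l_{\beta_1}=l_{\beta_2}=l$ and $l(\beta_1+\beta_2)=0$, so $l=0$. This contradicts $\sum_\beta l_\beta=n>0$. Hence $d(d+1)(d+4)$ must also be a perfect square.

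In the rational case I would not rely on the Diophantine conditions alone. Instead I would solve the quadrature system \eqref{eq:quadrature} explicitly for the four weights $k_{\alpha_i},l_{\beta_i}$, using $p=1,t,t^2,t^3,t^4$ and the moments \eqref{eq:moments}. The first-order equations $\sum k_\alpha=n-1$ and $\sum k_\alpha\alpha=-1$ already determine
\[
 k_{\alpha_i}=\frac{-1-(n-1)\alpha_j}{\alpha_i-\alpha_j}\qquad(j\ne i),
\]
and the analogous formulas give the $l_{\beta_i}$. The remaining equations for $p=t^2,t^3,t^4$ then impose polynomial identities in $d$ that must hold. My expectation is that, after clearing denominators, these identities (or the $t^2$ equation alone) reduce to a polynomial equation in $d$ whose only admissible root is $d=2$. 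Integrality and nonnegativity of the $k$'s and $l$'s, together with the two perfect-square conditions, would serve as a backup if the identities do not close the argument on their own.

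A complementary route, closer to the ``spectral-integrality'' description, works inside one class. Each $X_+$ is a two-distance set and a $2$-design, so its Gram matrix $G$ satisfies $G^2=(n/d)G$ and has rank $d$. Writing $G=I+\alpha_1A+\alpha_2(J-I-A)$ shows that the graph $A$ is strongly regular with prescribed eigenvalues. Its multiplicities $d$ and $n-1-d$ must then be integers, and the eigenvalues must be rational unless the graph is a conference graph. I expect the main obstacle to be carrying the rational case through to a clean contradiction for every $d\ge3$; I would try the explicit $t^2$-moment equation first, since it is linear in the already determined weights.

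Finally, for $d=2$ we have $n=3$, and the values are $\alpha=-1/2$ (double root) and $\beta\in\{1/2,-1\}$. So each class is an equilateral triangle, and the cross distances $l_{1/2}=2$, $l_{-1}=1$ force the two triangles to be rotated by $\pi/3$ relative to each other. This gives the regular hexagon.
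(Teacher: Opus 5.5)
Your splitting and Galois steps are correct and match the first half of the paper's proof. Your $k_{\alpha_1}=\frac{n-1}2+\frac{n-d-3}{2\tau}$, with $\alpha_{1,2}=\frac{-1\pm\tau}{d+2}$ and $\tau^2=\frac{d^2+d-4}{d}$, is exactly the paper's valency \eqref{eq:valency}, and $\tau\notin\Q$ gives $d^2-d-10=0$. The gap is the rational case, which you leave as an expectation. Your main plan for it cannot work, because the $t^2,t^3,t^4$ equations of \eqref{eq:quadrature} hold identically in $d$. The system \eqref{eq:quadrature} is a square $5\times5$ Vandermonde system, so its interpolatory solution exists for every $d$. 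That solution automatically has weight $1$ at $t=1$, since $\int (K_2^{(d)})^2\dd\sigma_d=D_2=2n-1$. It also automatically satisfies \eqref{eq:sums}, by the reproducing identity \eqref{eq:reproduce-one}. Hence the weights you fix from the first-order equations satisfy every higher moment equation. For $t^2$, for instance, the relations $(d+2)\alpha^2=1-\frac2d-2\alpha$ and $(d+2)\beta^2=1+\frac2d-2\beta$ reduce the equation to $1+\frac{d+1}2+\frac1d=\frac{(d+1)(d+2)}{2d}$, which is true for all $d$. In your strongly regular route, likewise, integrality of the multiplicities $d$ and $n-1-d$ carries no information; the content is in the eigenvalue, which you never compute.

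What is missing is an actual arithmetic constraint. The paper's constraint is spectral. On $\mathbf 1^\perp\cap\ker\mathsf G$, which has dimension $\frac{(d-2)(d+1)}4>0$ for $d>2$, the adjacency matrix acts by $r=-\frac12-\frac{d+3}{2\tau}$. Once $\tau\in\Q$, this is a rational algebraic integer, so $\nu=(d+3)/\tau$ is an odd integer with $\nu^2=\frac{d(d+3)^2}{d^2+d-4}$. For $d\ge10$ this lies strictly between $d+5$ and $d+6$, and parity together with irrationality of $\tau$ disposes of $3\le d\le9$. Your conference-graph remark would give rationality of $r$ directly, since $d\ne n-1-d$, but the squeeze is still needed. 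Alternatively, the integrality backup you mention does close the argument through the $l$'s alone, if you carry it out. Write $\beta_{1,2}=\frac{-1\pm\rho}{d+2}$ with $\rho^2=\frac{(d+1)(d+4)}d$. Then \eqref{eq:sums} gives $l_{\beta_1}-l_{\beta_2}=n/\rho$, so $\frac{d(d+1)(d+2)^2}{16(d+4)}$ must be the square of an integer. Since $d(d+1)(d+2)^2\equiv48\pmod{d+4}$, this forces $d+4\mid48$, that is, $d\in\{2,4,8,12,20,44\}$. Every value other than $2$ is divisible by $4$, which makes $|X|=\frac{(d+1)(d+2)}2$ odd, contradicting the even split. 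Finally, at $d=2$ the same-class roots are $0$ and $-\frac12$, not a double root at $-\frac12$. Each class is still an equilateral triangle, because $k_0=0$, or because a three-point $2$-design on $\Sph{1}$ is a regular triangle.
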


\begin{proof}
By Theorem~\ref{thm:split}, the design splits into two spherical $2$-designs of equal size
\[
n=\frac{|X|}2=\frac{(d+1)(d+2)}4 .
\]
For two distinct points in the same part, \eqref{eq:K2} gives $K_2^{(d)}(t)=-1$. The two possible inner products are
\begin{equation}\label{eq:alphabeta}
\alpha,\beta=\frac{-1\pm\tau}{d+2},\qquad \tau^2=\frac{d^2+d-4}{d}.
\end{equation}
Fix one part $Y$, and let $\mathsf A$ be the adjacency matrix for inner product $\alpha$. Its Gram matrix is
\[
\mathsf G=(1-\beta)I+(\alpha-\beta)\mathsf A+\beta J .
\]
Since $Y$ is a spherical $2$-design,
\[
\mathsf G\mathbf 1=0,\qquad \mathsf G^2=\frac nd\,\mathsf G .
\]
It follows from $\mathsf G\mathbf1=0$ that $\mathsf A$ is regular, with valency
\begin{equation}\label{eq:valency}
k=\frac{n-1}2+\frac{n-d-3}{2\tau}.
\end{equation}
If $\tau$ were irrational, integrality of $k$ would force $n-d-3=0$, equivalently $d^2-d-10=0$, which has no integer solution. Hence $\tau\in\Q$.

Assume $d>2$. Since $\mathsf G$ is the Gram matrix of $Y$, and $Y$ spans $\R^d$ by the second-moment identity, $\rank\mathsf G=d$; hence the subspace $\mathbf1^\perp\cap\ker\mathsf G$ has dimension
\[
n-d-1=\frac{(d-2)(d+1)}4>0 .
\]
On this subspace, $\mathsf A$ has eigenvalue
\[
r=-\frac{1-\beta}{\alpha-\beta}=-\frac12-\frac{d+3}{2\tau}.
\]
This eigenvalue is rational. Since $\mathsf A$ is an integer matrix, it is an algebraic integer and hence an integer. Thus
\[
\nu:=\frac{d+3}{\tau}=-2r-1
\]
is a positive odd integer. Squaring and using \eqref{eq:alphabeta},
\begin{equation}\label{eq:nu2}
\nu^2=\frac{d(d+3)^2}{d^2+d-4}.
\end{equation}
For $d\ge10$,
\[
\nu^2-(d+5)=\frac{4(2d+5)}{d^2+d-4}>0
\qquad\text{and}\qquad
(d+6)-\nu^2=\frac{d^2-7d-24}{d^2+d-4}>0 .
\]
Thus the integer $\nu^2$ lies strictly between the consecutive integers $d+5$ and $d+6$, which is impossible.

It remains to consider $2\le d\le9$. The equal sign split requires $|X|$ even, leaving only $d=2,3,6,7$. For $d=3,6,7$ one has
\[
\tau^2=\frac83,\quad\frac{19}3,\quad\frac{52}7,
\]
respectively, contradicting $\tau\in\Q$. Hence $d=2$.

Each sign class is then a three-point spherical $2$-design on $\Sph{1}$, so each is a regular triangle. Write the two triangles as the roots of $z^3=a$ and $z^3=b$ on the unit circle. The degree-three moment of the full $4$-design vanishes, giving $a+b=0$. Hence the union is the regular hexagon.
\end{proof}

\begin{corollary}\label{cor:ten}
There is no ten-point equal-weight spherical $4$-design on $\Sph{2}$.
\end{corollary}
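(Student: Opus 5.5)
The corollary follows at once from Theorem~\ref{thm:strength4}, applied with $d=3$. On $\Sph{2}$ we have $D_2(3)=\binom{4}{2}+\binom{3}{1}=9$. A ten-point spherical $4$-design on $\Sph{2}$ would therefore have corank $c=N-D_2=1$ and cardinality $|X|=D_2+1=(d+1)(d+2)/2=10$. This is exactly the situation covered by Theorem~\ref{thm:strength4}.

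That theorem says such a design can exist only when $d=2$, and then only as the regular hexagon. Since $d=3\ne2$, no ten-point design exists on $\Sph{2}$. The hypotheses of Theorem~\ref{thm:strength4} are just equal weights, distinct nodes and the cardinality $D_2+1$, and all of these hold under the standing conventions of Definition~\ref{def:design}. So the only thing to verify is the arithmetic $D_2(3)+1=10$.

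For completeness, I would also point out the concrete reason $d=3$ fails inside the proof of Theorem~\ref{thm:strength4}. By the splitting Theorem~\ref{thm:split}, each sign class has $n=5$ points and two possible inner products $\alpha,\beta$ with $\tau^2=8/3$. The valency formula \eqref{eq:valency} forces $\tau\in\Q$ because $n-d-3=-1\ne0$, and $8/3$ is not a rational square, which is a contradiction.

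There is no real obstacle here. The only point needing care is that the case exclusion in Theorem~\ref{thm:strength4} for $d\le9$ really covers $d=3$, and it does, since $d=3$ is listed explicitly.
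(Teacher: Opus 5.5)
Your proposal is correct and follows the paper's route exactly: the corollary is the case $d=3$ of Theorem~\ref{thm:strength4}, since $|X|=10=D_2(3)+1$, and inside that proof the contradiction is that $\tau^2=8/3$ is not a rational square while the valency $k=2-1/(2\tau)$ must be an integer. The paper also notes an independent second proof via the residue sign obstruction, Theorem~\ref{thm:sign-obstruction} with $(d,m)=(3,2)$.
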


\begin{remark}
For $m=2$ the polynomials $K_2^{(d)}\pm1$ are quadratics, irreducible over $\Q$ unless $d(d^2+d-4)$, respectively $d(d+1)(d+4)$, is a perfect square. Theorem~\ref{thm:galois} therefore gives a second proof of Theorem~\ref{thm:strength4} whenever at least one of these two quantities is not a square. Only dimensions represented simultaneously by integral points on both elliptic curves escape that argument; the spectral proof above avoids this Diophantine problem altogether.
\end{remark}

\section{Arithmetic obstructions at corank one, and strength six}\label{sec:higher}

We now exploit Lemmas~\ref{lem:distribution} and \ref{lem:galois} for arbitrary $m$. Two classical facts about the kernel are needed.

\begin{fact}\label{fact:jacobi}
$K_m^{(d)}(t)$ is a constant multiple of the Jacobi polynomial $P_m^{(\frac{d-1}2,\frac{d-3}2)}(t)$ \textup{\cite{DGS,Levenshtein98}}. The zeros of $P_n^{(a,b)}$ sum to $n(b-a)/(2n+a+b)$: in the explicit representation \textup{\cite[Chapter~IV]{Szego}}
\[
 P_n^{(a,b)}(t)=\frac{\Gamma(a+n+1)}{n!\,\Gamma(a+b+n+1)}\sum_{k=0}^n\binom nk\frac{\Gamma(a+b+n+k+1)}{\Gamma(a+k+1)}\Bigl(\frac{t-1}2\Bigr)^k,
\]
only the terms $k=n$ and $k=n-1$ contribute to the coefficients of $t^n$ and $t^{n-1}$, whose ratio is $-n+2n(a+n)/(2n+a+b)$. Hence the zeros of $K_m^{(d)}$ sum to
\[
s_{d,m}:=-\frac{m}{2m+d-2}\ne0 .
\]
For $m\ge2$ the polynomials $K\pm1$ have the same $t^m$ and $t^{m-1}$ coefficients as $K$, so their roots also sum to $s_{d,m}$.
\end{fact}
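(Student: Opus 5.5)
The plan is to prove the three assertions of Fact~\ref{fact:jacobi} in turn: identify $K_m^{(d)}$ as a Jacobi polynomial, read off the sum of its zeros from the two top coefficients of Szeg\H{o}'s explicit formula, and then observe that adding $\pm1$ does not change those coefficients.

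\emph{Identification.} The measure $\sigma_d$ has density proportional to $(1-t^2)^{(d-3)/2}$. Hence the $Q_j^{(d)}$ are Jacobi polynomials with parameters $a_0=b_0=(d-3)/2$. By \eqref{eq:reproduce-one}, $K_m^{(d)}$ is the reproducing kernel of polynomials of degree $\le m$ for $\sigma_d$, evaluated at the point $1$. I will use the standard Christoffel--Darboux fact that a kernel polynomial $K_m(\cdot,1)$ is orthogonal to all polynomials of degree $<m$ with respect to the modified weight $(1-t)\,w_d(t)$. The argument is:
\begin{itemize}
\item[(i)] for $\deg q\le m-1$, the polynomial $(1-t)q(t)$ has degree $\le m$;
\item[(ii)] it therefore reproduces at $1$, giving $\int K_m(t)(1-t)q(t)\dd\sigma_d(t)=0$.
\end{itemize}
Since $(1-t)w_d(t)\propto(1-t)^{(d-1)/2}(1+t)^{(d-3)/2}$, uniqueness of degree-$m$ orthogonal polynomials up to scaling gives $K_m^{(d)}\propto P_m^{(\frac{d-1}2,\frac{d-3}2)}$. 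This agrees with \cite{DGS,Levenshtein98}, and I would simply cite it.

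\emph{Sum of zeros.} Expand Szeg\H{o}'s formula in powers of $t$. The factor $((t-1)/2)^k$ has degree $k$, so only $k=n$ and $k=n-1$ reach degrees $n$ and $n-1$.
\begin{itemize}
\item The $t^n$ coefficient comes only from $k=n$. It is $2^{-n}\Gamma(a+b+2n+1)/\Gamma(a+n+1)$, times the common prefactor.
\item The $t^{n-1}$ coefficient receives $-n$ times the leading coefficient from the expansion of $(t-1)^n$.
\item The $k=n-1$ term contributes $n\,2^{-(n-1)}\Gamma(a+b+2n)/\Gamma(a+n)$.
\end{itemize}
Dividing by the leading coefficient with $\Gamma(z+1)=z\Gamma(z)$ gives the ratio
\[
 -n+\frac{2n(a+n)}{2n+a+b}.
\]
By Vieta the zeros therefore sum to $n-2n(a+n)/(2n+a+b)=n(b-a)/(2n+a+b)$. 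Substituting $n=m$, $a-b=1$ and $a+b=d-2$ gives $s_{d,m}=-m/(2m+d-2)$. This is nonzero because $m\ge1$ and $2m+d-2>0$.

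\emph{Shift by $\pm1$.} When $m\ge2$, both exponents $m$ and $m-1$ are at least $1$. Adding the constant $\pm1$ changes only the $t^0$ coefficient, so $K\pm1$ has the same two top coefficients as $K$, and its roots, counted with multiplicity, again sum to $s_{d,m}$.

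The only delicate step is the bookkeeping in the $t^{n-1}$ coefficient: one must remember the $-n$ contribution from expanding $(t-1)^n$ and apply the Gamma-function ratios correctly. The identification step is classical and can be cited.
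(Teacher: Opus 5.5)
Your proposal is correct and follows the paper's route: Szeg\H{o}'s explicit formula, the two top coefficients coming from the $k=n$ and $k=n-1$ terms (including the $-n$ from expanding $(t-1)^n$), Vieta, and the observation that for $m\ge2$ adding $\pm1$ changes only the constant term. The one addition is your self-contained proof of the Jacobi identification, namely that the kernel polynomial $K_m^{(d)}$ is orthogonal to $\Pol_{m-1}$ with respect to $(1-t)w_d(t)$; the paper simply cites this identification from \cite{DGS,Levenshtein98}.
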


\begin{lemma}[Integral form of the kernel]\label{lem:integral}
$m!\,K_m^{(d)}(t)\in\Z[d,t]$. The coefficient of $t^m$ in this polynomial is $\prod_{i=0}^{m-1}(d+2i)$, and the polynomial is divisible in $\Q[d,t]$ by
\[
g_m(d):=\prod_{i=0}^{\lfloor m/2\rfloor-1}(d+2i).
\]
\end{lemma}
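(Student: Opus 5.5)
The plan is to find a closed form for $K_m^{(d)}$ in which every coefficient is visibly a polynomial in $d$, and then read off all three claims from it. Set $\lambda=(d-2)/2$ and write $C_n^{(\mu)}$ for the classical Gegenbauer polynomials, with generating function $\sum_n C_n^{(\mu)}(t)r^n=(1-2rt+r^2)^{-\mu}$.

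\emph{Step 1: rewrite each $h_nQ_n^{(d)}$ as a Gegenbauer polynomial.} For $d\ge3$ the standard normalisation gives $h_nQ_n^{(d)}=\frac{n+\lambda}{\lambda}C_n^{(\lambda)}$. Differentiating the generating function in $r$ yields
\[
 \sum_{n\ge0}\frac{n+\lambda}{\lambda}C_n^{(\lambda)}(t)r^n=(1-r^2)(1-2rt+r^2)^{-\lambda-1}.
\]
Comparing coefficients of $r^n$ on both sides gives
\[
 h_nQ_n^{(d)}=C_n^{(d/2)}-C_{n-2}^{(d/2)},
\]
with $C_{-1}=C_{-2}=0$. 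For $d=2$ the same generating-function identity holds with $h_nQ_n=2T_n$ for $n\ge1$, namely $\sum_n h_nT_nr^n=(1-r^2)/(1-2rt+r^2)$. So the identity is uniform in $d\ge2$.

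\emph{Step 2: telescope.} Summing the identity of Step 1 over $n=0,\dots,m$, as in \eqref{eq:kernel}, the sum telescopes to
\[
 K_m^{(d)}=C_m^{(d/2)}+C_{m-1}^{(d/2)}.
\]
This is consistent with \eqref{eq:K2}; for instance $C_2^{(d/2)}+C_1^{(d/2)}=\frac d2\bigl((d+2)t^2+2t-1\bigr)$.

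\emph{Step 3: expand explicitly.} Use the explicit formula
\[
 C_n^{(\mu)}(t)=\sum_k(-1)^k\frac{(\mu)_{n-k}}{k!\,(n-2k)!}(2t)^{n-2k},
\]
together with $(d/2)_{j}2^{j}=\prod_{i=0}^{j-1}(d+2i)$. This gives
\[
 C_n^{(d/2)}(t)=\sum_k(-1)^k\frac{\prod_{i=0}^{n-k-1}(d+2i)}{2^k\,k!\,(n-2k)!}\,t^{n-2k}.
\]

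\emph{Step 4: read off the three claims.}
\begin{itemize}
\item Integrality: since $(2k)!/(2^kk!)=(2k-1)!!$, we have $m!/(2^kk!(m-2k)!)=\binom{m}{2k}(2k-1)!!\in\Z$, and $m!/(2^kk!(m-1-2k)!)=m\binom{m-1}{2k}(2k-1)!!\in\Z$. Hence $m!\,K_m^{(d)}\in\Z[d,t]$.
\item Leading coefficient: only $C_m^{(d/2)}$ contributes to $t^m$, through its $k=0$ term. So the $t^m$ coefficient of $m!K_m^{(d)}$ is $\prod_{i=0}^{m-1}(d+2i)$.
\item Divisibility: every term of $C_n^{(d/2)}$ carries $\prod_{i<n-k}(d+2i)$ with $n-k\ge\lceil n/2\rceil$. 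For $n=m$ this is $\ge\lceil m/2\rceil\ge\lfloor m/2\rfloor$, and for $n=m-1$ it is $\ge\lceil (m-1)/2\rceil=\lfloor m/2\rfloor$. So $g_m(d)$ divides every term, and hence divides $m!K_m^{(d)}$ in $\Q[d,t]$.
\end{itemize}

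\emph{Main obstacle.} The only delicate point is Step 1: matching the paper's normalisation $Q_n(1)=1$ with $h_n$ to the Gegenbauer normalisation. I would check it by evaluating at $t=1$. There $C_n^{(d/2)}(1)-C_{n-2}^{(d/2)}(1)=\binom{n+d-1}{n}-\binom{n+d-3}{n-2}=h_n$, which is exactly the required value of $h_nQ_n(1)$. Since both sides are proportional multiples of the same orthogonal family, this confirms the identity and makes the degenerate case $d=2$ harmless.
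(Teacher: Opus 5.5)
Your proof is correct, but for the divisibility claim it takes a different route from the paper. The paper treats the three claims separately. For integrality and the leading coefficient, it expands each summand $h_kQ_k^{(d)}=(2k+d-2)C_k^{(d/2-1)}/(d-2)$ of \eqref{eq:kernel} term by term and notes that the denominators $2^jj!(k-2j)!$ divide $k!$, hence $m!$. For divisibility by $g_m(d)$, it switches to the hypergeometric (Jacobi) form $K_m^{(d)}=D_m\,{}_2F_1\bigl(-m,m+d-1;\tfrac{d+1}{2};\tfrac{1-t}{2}\bigr)$ of Fact~\ref{fact:jacobi}. It observes that the factor $(d)_{m-1}$ in $D_m$ vanishes at $d=-2i$ for $0\le 2i\le m-2$, while the ${}_2F_1$ denominators $\prod(d+1+2i)$ do not. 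Hence $m!K_m^{(d)}$ vanishes identically in $t$ at those values of $d$.

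You instead use the contiguous relation $\frac{n+\lambda}{\lambda}C_n^{(\lambda)}=C_n^{(\lambda+1)}-C_{n-2}^{(\lambda+1)}$, which you derive from the Poisson-kernel generating function. This telescopes the kernel to the closed form $K_m^{(d)}=C_m^{(d/2)}+C_{m-1}^{(d/2)}$. A single explicit expansion then exhibits all three facts at once:
\begin{itemize}
\item the integer factors $\binom{m}{2k}(2k-1)!!$ and $m\binom{m-1}{2k}(2k-1)!!$;
\item the leading term;
\item the visible factor $\prod_{i<n-k}(d+2i)$ with $n-k\ge\lfloor m/2\rfloor$.
\end{itemize}

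Your route is more elementary and self-contained: it needs no specialisation to negative $d$ and no hypergeometric identification. It also yields the slightly stronger statement that $g_m(d)$ divides $m!K_m^{(d)}$ in $\Z[d,t]$. Your telescoped form is uniform in $d\ge2$, with no $1/(d-2)$ normalisation to cancel. The paper's route has the advantage of reusing the Jacobi representation it already needs in Fact~\ref{fact:jacobi} for the zero sum $s_{d,m}$.
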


\begin{proof}
Write $\lambda=d/2-1$, so $Q_k^{(d)}=C_k^{(\lambda)}/C_k^{(\lambda)}(1)$ with $C_k^{(\lambda)}$ the classical Gegenbauer polynomial. From $h_k=(2k+d-2)(d-1)_{k-1}/k!$ and $C_k^{(\lambda)}(1)=(d-2)_k/k!$ (Pochhammer symbols) one gets $h_kQ_k^{(d)}=(2k+d-2)C_k^{(\lambda)}(t)/(d-2)$, and inserting the expansion $C_k^{(\lambda)}(t)=\sum_j(-1)^j\frac{(\lambda)_{k-j}}{j!(k-2j)!}(2t)^{k-2j}$ with $(\lambda)_{k-j}=2^{-(k-j)}\prod_{i=0}^{k-j-1}(d-2+2i)$, the coefficient of $t^{k-2j}$ in $h_kQ_k^{(d)}$ equals
\[
(-1)^j2^{-j}\,(2k+d-2)\prod_{i=1}^{k-j-1}(d-2+2i)\Big/\bigl(j!\,(k-2j)!\bigr),
\]
a polynomial in $d$ whose denominator divides $2^jj!(k-2j)!$, which divides $k!$, which divides $m!$. This proves integrality. The leading coefficient is the case $k=m$, $j=0$: $(2m+d-2)\prod_{i=1}^{m-1}(d-2+2i)/m!=\prod_{i=0}^{m-1}(d+2i)/m!$. For the divisibility, use Fact~\ref{fact:jacobi} in hypergeometric form,
\begin{align*}
K_m^{(d)}(t)
 &=D_m\cdot{}_2F_1\!\left(-m,\,m+d-1;\,\frac{d+1}{2};\,\frac{1-t}{2}\right),\\
D_m&=\frac{(d+2m-1)(d)_{m-1}}{m!}.
\end{align*}
where $(d)_{m-1}=d(d+1)\cdots(d+m-2)$. The ${}_2F_1$ is a finite sum whose coefficients have denominators $\prod_{i<s}(d+1+2i)$, which do not vanish at even negative $d$. Hence the polynomial $m!K_m^{(d)}(t)$ vanishes identically in $t$ at $d=-2i$ whenever $0\le 2i\le m-2$, i.e.\ for $0\le i\le\lfloor m/2\rfloor-1$, so each $d+2i$ divides it.
\end{proof}

\begin{theorem}[Galois obstruction]\label{thm:galois}
Let $m\ge2$ and $d\ge3$. If $K_m^{(d)}-1$ or $K_m^{(d)}+1$ is irreducible over $\Q$, there is no corank-one spherical $2m$-design on $\Sph{d-1}$.
\end{theorem}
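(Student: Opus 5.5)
Suppose a corank-one $2m$-design $X$ exists and fix $x\in X$. By Lemma~\ref{lem:distribution} the numbers $k_\alpha$ (roots $\alpha$ of $K+1$) and $l_\beta$ (roots $\beta$ of $K-1$) form the unique solution of \eqref{eq:quadrature}. By Lemma~\ref{lem:galois}, if $K-1$ is irreducible over $\Q$, then $l$ is constant on all $m$ roots of $K-1$; likewise $k$ is constant on the roots of $K+1$ if that polynomial is irreducible. I treat the two cases in parallel and play the weighted sums in \eqref{eq:sums} against Fact~\ref{fact:jacobi}, which says the roots of $K\pm1$ sum to $s_{d,m}=-m/(2m+d-2)\ne0$ when $m\ge2$.

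\emph{Case $K-1$ irreducible.} Let $l$ be the common value. If some root is non-real or lies outside $[-1,1)$, then $l=0$, so $\sum_\beta l_\beta=0$. This contradicts $\sum_\beta l_\beta=n=|X|/2>0$. Hence $l>0$, and \eqref{eq:sums} gives $lm=n$ and $l\sum_\beta\beta=0$. So $\sum_\beta\beta=0$, contradicting $s_{d,m}\ne0$.

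\emph{Case $K+1$ irreducible.} Let $k$ be the common value. From $\sum_\alpha k_\alpha\alpha=-1$ we get $k\ne0$, and then $k\,s_{d,m}=-1$, i.e.\ $k=(2m+d-2)/m$. Also $km=n-1$, so $n=2m+d-1$. But $2n=D_m+1$, and I will compare this with $D_m=\binom{d+m-1}{m}+\binom{d+m-2}{m-1}$. For $m\ge2$ and $d\ge3$ one has $D_m\ge D_2=d(d+3)/2$, and $d(d+3)/2+1>2(d+3)$ once $d\ge4$, so the count fails for $d\ge4$. The case $d=3$ is also excluded for $m\ge2$: there $D_m=(m+1)^2$, and $(m+1)^2+1=2(2m+2)$ forces $m^2-2m-3=0$, i.e.\ $m=3$. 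This single pair $(d,m)=(3,3)$ must be handled separately. Either $k=(2m+d-2)/m=7/3$ is not an integer, which already gives a contradiction, or one checks directly. Since $7/3\notin\Z$, that case dies too. More generally, I will simply note that integrality of $k$ requires $m\mid d-2$, and combine this with the size count, which fails for all $d\ge3$, $m\ge2$. This disposes of every case.

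The main obstacle is that the weights may vanish. Lemma~\ref{lem:galois} only gives constancy, and a vanishing class is allowed when a root is non-real. In the $K-1$ case the total mass $n$ rules this out. In the $K+1$ case I use the first-moment identity $\sum k_\alpha\alpha=-1\ne0$ instead of the mass, since $n-1$ could in principle be carried by nothing else. The remaining work is to check the elementary inequality $D_m+1\ne2(2m+d-1)$ across the range, which I expect to be routine.
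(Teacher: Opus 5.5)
Your strategy matches the paper's: Galois constancy of the forced multiplicities of Lemma~\ref{lem:distribution}, played against the nonzero root sum $s_{d,m}$ of Fact~\ref{fact:jacobi}. The $K-1$ case is complete. In the $K+1$ case your reduction to $n=2m+d-1$, i.e.\ $D_m=4m+2d-3$, is correct. Using $\sum_\alpha k_\alpha\alpha=-1$ to get $k\ne0$ is a harmless variant of the paper's use of $\sum_\alpha k_\alpha=n-1\ge1$.

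The gap is in the final count, which you call routine but do not actually establish. For $d\ge4$ you argue $D_m+1\ge D_2+1>2(d+3)$. But $2(d+3)$ equals $2n=2(2m+d-1)$ only when $m=2$. For $m\ge3$ the target $4m+2d-2$ exceeds $2(d+3)$, so the bound $D_m\ge D_2$, which does not grow with $m$, cannot exclude equality. You must show that $D_m$ outruns $4m$.

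For $d=3$ there is also an arithmetic slip: $(m+1)^2+1=4m+4$ is $m^2-2m-2=0$, not $m^2-2m-3=0$. The correct equation has no integer root. So the case $(d,m)=(3,3)$ and the appeal to $k=7/3\notin\mathbb Z$ are artefacts of the slip, though your conclusion happens to survive. The observation that integrality forces $m\mid d-2$ is true but does not replace the missing size estimate.

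The missing step is monotonicity in $m$, which is how the paper closes the argument. Put $\Delta(d,m)=D_m-4m-2d+3$. Then
$\Delta(d,m+1)-\Delta(d,m)=h_{m+1}-4\ge 2(m+1)+1-4>0$,
since $h_k$ is nondecreasing in $d$ and $h_k=2k+1$ for $d=3$. Moreover $\Delta(d,2)=(d^2-d-10)/2>0$ for $d\ge4$, while $\Delta(3,2)=-2$ and $\Delta(3,3)=1$. Hence $\Delta$ never vanishes for $d\ge3$, $m\ge2$. With this inserted, your argument coincides with the paper's proof.
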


\begin{proof}
Suppose a corank-one design exists and $K-1$ is irreducible. By Lemma~\ref{lem:galois} all $l_\beta$ are equal, say to $l$. If some root of $K-1$ is non-real or lies outside $[-1,1)$, then $l=0$, contradicting $\sum_\beta l_\beta=n\ge1$. Otherwise $\sum_\beta l_\beta=ml=n$ and, by Fact~\ref{fact:jacobi}, $\sum_\beta l_\beta\beta=l\,s_{d,m}\ne0$, contradicting \eqref{eq:sums}.

Suppose $K+1$ is irreducible. By Lemma~\ref{lem:galois}, all $k_\alpha$ are equal, say to $k$. If some root of $K+1$ is non-real or lies outside $[-1,1)$, then $k=0$, contradicting $\sum_\alpha k_\alpha=n-1\ge1$. Otherwise $k=(n-1)/m$, and \eqref{eq:sums} gives $k\,s_{d,m}=-1$, i.e.\ $n-1=2m+d-2$, i.e.\ $D_m=2n-1=4m+2d-3$. This never happens for $d\ge3$, $m\ge2$: put $\Delta(d,m)=D_m-4m-2d+3$. Then $\Delta(d,m+1)-\Delta(d,m)=h_{m+1}-4\ge 2(m+1)+1-4>0$, because $h_k=\dim\Harm_k(\R^d)$ is non-decreasing in $d$ and $h_k^{(3)}=2k+1$; further $\Delta(d,2)=(d^2-d-10)/2$ is positive for $d\ge4$ and never zero, while $\Delta(3,2)=-2$ and $\Delta(3,3)=1$.
\end{proof}

\begin{remark}
The hypothesis $m\ge2$ is essential: for $m=1$ the constant term is the $t^{m-1}$ coefficient, $K_1-1=dt$ has the root $0$, and the double-simplex examples exist in every even dimension \cite{Mimura90}.
\end{remark}

\begin{theorem}[An unconditional family in every strength]\label{thm:eisenstein}
Let $m\ge2$, $d\ge3$, and let $p>m$ be a prime dividing $d+2i$ for some $0\le i\le\lfloor m/2\rfloor-1$, with $v:=\vp(d+2i)$ satisfying $\gcd(v,m)=1$. Then $K_m^{(d)}+1$ and $K_m^{(d)}-1$ are both irreducible over $\Q$, and hence there is no corank-one spherical $2m$-design on $\Sph{d-1}$.
\end{theorem}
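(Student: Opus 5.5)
Irreducibility will come from a $p$-adic Newton-polygon argument applied to the integral form of Lemma~\ref{lem:integral}; nonexistence then follows from Theorem~\ref{thm:galois}. Put $F_\pm(t):=m!\,(K_m^{(d)}(t)\pm1)\in\Z[t]$, which has degree $m$. Since $p>m$, the factor $m!$ is a $p$-adic unit, so the Newton polygon of $F_\pm$ with respect to $p$ is the same as that of $K\pm1$ up to a vertical shift.

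\textbf{Leading coefficient.} By Lemma~\ref{lem:integral}, the $t^m$ coefficient of $F_\pm$ is $\prod_{j=0}^{m-1}(d+2j)$. This product contains the factor $d+2i$, since $i\le\lfloor m/2\rfloor-1<m$. For $j\ne i$ we have $|(d+2j)-(d+2i)|=2|j-i|\le 2(m-1)$. If $p$ divided such a factor, it would divide $2|j-i|$, which is impossible because $p>m\ge2$ forces $p$ odd and $p>|j-i|$. Hence $v_p$ of the leading coefficient is exactly $v$.

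\textbf{Lower coefficients.} Write $m!K(t)=g_m(d)\,G(d,t)$ with $G\in\Q[d,t]$ by the divisibility part of Lemma~\ref{lem:integral}. I need $v_p$ of every non-leading coefficient of $m!K$ to be at least $v$. This requires $G$ to be $p$-integral, i.e.\ its denominators must be prime to $p$.

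I expect this to be the \textbf{main obstacle}. The route is to read denominators off the explicit Gegenbauer expansion in the proof of Lemma~\ref{lem:integral}. All its denominators divide $m!$, which is prime to $p$. Dividing by the monic linear factors $d+2j'$ of $g_m$ does not introduce $p$ into denominators: the quotient of a polynomial in $\Z_{(p)}[d]$ by a monic linear factor stays in $\Z_{(p)}[d]$ (Gauss's lemma). So $G\in\Z_{(p)}[d,t]$, and every coefficient of $m!K$, evaluated at our integer $d$, has $v_p\ge v_p(g_m(d))\ge v$. Here $p$ divides only the factor $d+2i$ of $g_m(d)$, by the same argument as above.

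\textbf{Constant term.} The constant term of $F_\pm$ is $m!K(0)\pm m!$. The first summand has valuation $\ge v$ and $m!$ is a unit, so the constant term is a $p$-adic unit. In fact every non-leading coefficient of $F_\pm$ other than the constant term has valuation $\ge v$, and the $\pm m!$ shift affects only the constant term.

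\textbf{Newton polygon.} Using $x$ for the exponent, the polygon has the point $(0,0)$ from the constant term, the point $(m,v)$ from the leading term, and all other points on or above height $v$. So it is the single segment from $(0,0)$ to $(m,v)$, of slope $v/m$. Because $\gcd(v,m)=1$, this segment contains no lattice points in its interior. By the standard Newton-polygon (Dumas) criterion, any factorization over $\Q_p$, hence over $\Q$, would split the segment at a lattice point, which is impossible. Therefore $F_\pm$, and with it $K\pm1$, is irreducible over $\Q$, and Theorem~\ref{thm:galois} gives the nonexistence of a corank-one spherical $2m$-design on $\Sph{d-1}$.

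Only one of the two irreducibilities is actually needed for Theorem~\ref{thm:galois}, but both follow from the same argument.
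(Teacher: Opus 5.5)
Your proposal is correct and follows essentially the paper's proof: both pass to $m!(K\pm1)$, use Lemma~\ref{lem:integral} to get valuation $\ge v$ for every coefficient of $m!K$, note that the constant term is a $p$-adic unit and the leading coefficient has valuation exactly $v$, and conclude from a single-segment Newton polygon with $\gcd(v,m)=1$ before invoking Theorem~\ref{thm:galois}. The differences are cosmetic. The paper reverses the polynomial, while you work with it directly, and your Gauss-lemma remark spells out the $p$-integrality of the quotient by $d+2i$, which the paper asserts in one line.
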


\begin{proof}
Let $f=m!(K\pm1)=m!K(d,t)\pm m!\in\Z[t]$. By Lemma~\ref{lem:integral}, every coefficient of $m!K(d,t)$ is divisible by $(d+2i)$, hence by $p^v$. The constant term of $f$ is congruent to $\pm m!\not\equiv0\pmod p$ because $p>m$. The leading coefficient is $\prod_{i'<m}(d+2i')$, and $p\mid d+2i'$ would force $p\mid 2(i'-i)$ with $|i'-i|<m<p$, so $\vp(\operatorname{lead}f)=v$. The reversed polynomial $f^*(t)=t^mf(1/t)$ therefore has a $p$-adic Newton polygon consisting of the single segment from $(0,v)$ to $(m,0)$: every intermediate coefficient has valuation at least $v$, above the segment. By the Newton polygon theorem (Dumas \cite{Dumas06}; see \cite[Ch.~2]{Prasolov}), every irreducible factor of $f^*$ over $\Q_p$ has degree divisible by $m/\gcd(v,m)=m$, so $f^*$, and with it $f$ (note $f(0)\ne0$), is irreducible over $\Q$. Theorem~\ref{thm:galois} concludes.
\end{proof}

\begin{example}
For $m=2$: every $d$ having a prime $p\ge3$ to an odd power. For $m=3$: every $d$ having a prime $p\ge5$ with $\gcd(\vp(d),3)=1$. For $m=4$: every $d$ such that $d$ or $d+2$ has a prime $p\ge5$ to an odd power. In every strength the excluded dimensions have positive density.
\end{example}

The strength-six classification needs the rational roots of $K_3^{(d)}-1$.

\begin{lemma}[Rational roots at strength six]\label{lem:rational-roots}
For an integer $d\ge2$ put $g_d(t)=d(d+4)t^3+3dt^2-3dt-3\in\Z[t]$.  Then
\begin{align*}
K_3^{(d)}(t)&=\frac d6\Bigl((d+2)(d+4)t^3+3(d+2)t^2-3(d+2)t-3\Bigr),\\
K_3^{(d)}(t)-1&=\frac{d+2}{6}\,g_d(t).
\end{align*}
Every real root of $g_d$ lies in $(-1,1)$, and the only pairs $(d,t)$ with $t\in\Q$ a root of $g_d$ are
\[
(d,t)\in\{(2,-\tfrac12),\ (6,\tfrac12),\ (12,-\tfrac12)\}.
\]
\end{lemma}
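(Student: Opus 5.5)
The plan has three steps: compute the kernel explicitly, locate the real roots, and then solve the Diophantine problem for rational roots.

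\emph{Explicit kernel.} I would write $K_3^{(d)}=1+dt+h_2Q_2^{(d)}+h_3Q_3^{(d)}$, using $Q_2^{(d)}=(dt^2-1)/(d-1)$, $Q_3^{(d)}=((d+2)t^3-3t)/(d-1)$, $h_2=(d-1)(d+2)/2$ and $h_3=(d-1)d(d+4)/6$. Collecting powers of $t$ gives the stated cubic. For the leading coefficient, Lemma~\ref{lem:integral} provides the check $d(d+2)(d+4)/6$. Subtracting $1$, the constant term becomes $-d/2-1=-(d+2)/2$. Factoring $(d+2)/6$ out of every coefficient then yields $g_d$.

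\emph{Real roots.} I would check the signs at the endpoints. At $t=1$ we get $g_d(1)=d^2+4d-3>0$. At $t=-1$ we get $g_d(-1)=-d(d+4)+3d+3d-3=-d^2+2d-3<0$. The leading coefficient is positive, so $g_d\to\pm\infty$ as $t\to\pm\infty$. It remains to exclude roots with $|t|\ge1$. For $t\ge1$, the quantity $d(d+4)t^3-3dt$ is at least $t(d(d+4)-3d)>0$, and the remaining terms satisfy $3dt^2-3\ge0$, so $g_d>0$. For $t\le-1$, put $s=-t\ge1$. Then $g_d=-d(d+4)s^3+3ds^2+3ds-3\le -s^3\,(d(d+4)-6d)-3<0$, because $d^2-2d\ge0$. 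Hence every real root lies in $(-1,1)$.

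\emph{Rational roots.} This is the main step. By the rational root theorem, a root $t=a/b$ in lowest terms has $a\mid 3$ and $b\mid d(d+4)$. Since $|t|<1$, we must have $a=\pm1$ or $a=\pm3$. I would substitute $t=a/b$ and clear denominators:
\[
 d(d+4)a^3+3d\,a^2b-3d\,ab^2-3b^3=0.
\]
Reducing modulo $d$ gives $d\mid 3b^3$. Since $a\mid3$, it is more efficient to treat this as a quadratic in $d$:
\[
 a^3d^2+(4a^3+3a^2b-3ab^2)d-3b^3=0,
\]
which has integer solutions $d$ only when its discriminant
\[
 \Delta=(4a^3+3a^2b-3ab^2)^2+12a^3b^3
\]
is a perfect square. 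For $a=\pm1$ this is a quartic in $b$ of the form $9b^4+\dots$. I would bound it strictly between the squares $(3b^2+\lambda b+\mu)^2$ for suitable $\lambda,\mu$ once $|b|$ is large, leaving finitely many small $b$ to check by hand. The same method handles $a=\pm3$ after scaling. Solving the resulting quadratic should produce exactly $b=\mp2$, with $d=2,12$ for $t=-\tfrac12$ and $d=6$ for $t=\tfrac12$. Each of these is easily verified directly, for example $g_6(1/2)=60/8+18/4-9-3=0$.

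The obstacle I expect is doing the square-sandwich argument carefully for every sign case. In particular, one must confirm that the case $a=\pm3$, which requires $|b|>3$, contributes nothing.
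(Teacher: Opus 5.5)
Your proposal follows the paper's proof essentially step for step. You use the same harmonic expansion of $K_3^{(d)}$ and the same sign analysis on $|t|\ge1$ (you bound terms directly where the paper uses $g_d'(t)>0$ for $|t|\ge1$). You then make the same rational-root reduction to a quadratic in $d$, squeeze its discriminant between consecutive squares, and finish with a finite enumeration.

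For the obstacle you flag, $a=\pm3$ needs no sandwich at all. Dividing $d(d+4)a^3+3da^2b-3dab^2-3b^3=0$ by $3$ leaves $b^3=\pm9d(d+4)+9db\mp3db^2\equiv0\pmod 3$, which contradicts $\gcd(3,b)=1$. So only the $a=\pm1$ quartics require the square-sandwich and small-$b$ check, whereas the paper's certificate enumerates $s$ up to about $250$ for $\varepsilon=\pm3$.
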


\begin{proof}
The formula for $K_3^{(d)}$ is a direct computation from \eqref{eq:kernel}.  The location of the real roots is elementary, and the determination of the rational roots is a finite Diophantine certificate (a discriminant squeeze together with an explicit enumeration of the remaining cases); both are carried out in Appendix~\ref{app:rational-roots}.
\end{proof}

\begin{theorem}[Corank-one classification at strength six]\label{thm:strength6}
Let $X\subset\Sph{d-1}$ be an equal-weight spherical $6$-design with $|X|=D_3+1=d(d+1)(d+5)/6+1$. Then $d=2$ and $X$ is a regular octagon.
\end{theorem}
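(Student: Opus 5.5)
Apply Theorem~\ref{thm:galois} to $K_3^{(d)}-1=\frac{d+2}{6}g_d$. The cubic $g_d$ is reducible over $\Q$ exactly when it has a rational root. By Lemma~\ref{lem:rational-roots} this happens only for $d\in\{2,6,12\}$. For $d\ge3$ and $d\notin\{6,12\}$, Theorem~\ref{thm:galois} already excludes corank-one $6$-designs.

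It remains to treat $d=6$, $d=12$, and $d=2$. We also need $|X|=D_3+1$ to be even, by Theorem~\ref{thm:split}. For $d=6$, $D_3+1=6\cdot7\cdot11/6+1=78$, which is even. For $d=12$, $D_3+1=12\cdot13\cdot17/6+1=443$, which is odd, so $d=12$ is excluded at once.

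For $d=6$ the rational root $t=\tfrac12$ gives a factorisation $g_6=(2t-1)q(t)$ with $q$ a quadratic. Concretely, $g_6=60t^3+18t^2-18t-3=(2t-1)(30t^2+24t+3)$. The roots of $q$ are $\frac{-4\pm\sqrt6}{10}$, which are irrational and conjugate. By Lemma~\ref{lem:galois} they carry a common weight $l'$. Write $l_{1/2}$ for the weight at $\tfrac12$. Then \eqref{eq:sums} gives
\[
l_{1/2}+2l'=n=39,\qquad \tfrac12 l_{1/2}-\tfrac{8}{10}l'=0 .
\]
Hence $l_{1/2}=\tfrac85 l'$, so $\tfrac{18}{5}l'=39$ and $l'=65/6$, which is not an integer. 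This contradiction excludes $d=6$.

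If a check against the full system \eqref{eq:quadrature} is needed, I would also use the higher moments, but the two linear sums should already suffice. I would double-check the arithmetic of $D_3(6)$ and of the factorisation. This is the step most likely to need care. If $l'$ unexpectedly came out integral, I would fall back on the analogous Galois analysis of $K_3^{(6)}+1$, whose roots and sum $s_{6,3}$ constrain the $k_\alpha$ through $\sum k_\alpha\alpha=-1$.

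For $d=2$, $D_3=7$, so $|X|=8$. By Theorem~\ref{thm:split}, $X$ splits into two $4$-point spherical $3$-designs on $\Sph1$, and each is a square. Write them as the roots of $z^4=a$ and $z^4=b$ with $|a|=|b|=1$. The vanishing fourth moment $\sum_x z_x^4=0$ (valid for a $6$-design) gives $4a+4b=0$, so $b=-a$. The union of the two squares is then the regular octagon. Conversely, the regular octagon is a $7$-design, so the classification is sharp.
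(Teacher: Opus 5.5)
Your proof is correct, and outside the case $d=6$ it follows the paper's proof. The shared steps are:
\begin{itemize}
\item Theorem~\ref{thm:galois} applied to the cubic $K_3^{(d)}-1$;
\item Lemma~\ref{lem:rational-roots} to leave only $d=2,6,12$;
\item the parity condition from Theorem~\ref{thm:split} to exclude $d=12$;
\item the splitting-plus-power-sum argument of Proposition~\ref{prop:circle} for $d=2$.
\end{itemize}

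At $d=6$ the paper switches to the companion polynomial. It notes that $K_3^{(6)}+1=2(40t^3+12t^2-12t-1)$ has no rational root, so it is irreducible. Theorem~\ref{thm:galois} then applies directly and would force every $k_\alpha=(n-1)/3=38/3$.

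You instead stay with $K_3^{(6)}-1$ and use the factorisation $g_6=(2t-1)(30t^2+24t+3)$. The quadratic factor is irreducible, since its discriminant $216$ is not a square. You then apply Lemma~\ref{lem:galois} factor by factor, together with the two linear identities in \eqref{eq:sums}. Your arithmetic is right: $n=39$, the quadratic roots sum to $-\tfrac45$, and $l'=\tfrac{65}{6}$. So the fallback you mention is not needed.

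The paper's route uses Theorem~\ref{thm:galois} only in its stated form, at the cost of a second rational-root test. Your route shows that Galois constancy plus the first two moment sums can still give a contradiction when $K-1$ is reducible. That factor-by-factor use of Lemma~\ref{lem:galois} is what one would need in the cases where both $K_m^{(d)}\pm1$ are reducible, which Remark~\ref{rem:what-remains} leaves open.
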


\begin{proof}
(i) \emph{Parity.} For $d=2e$, $D_3=e(2e+1)(2e+5)/3$ with $(2e+1)(2e+5)$ odd, so $v_2(D_3)=v_2(e)$ and $D_3$ is odd iff $e$ is odd. For $d$ odd, $d+1$ and $d+5$ are both even, so $4\mid(d+1)(d+5)$ and $D_3$ is even. Hence $|X|=D_3+1$ is even iff $d\equiv2\pmod4$, and Theorem~\ref{thm:split} forces $d\equiv2\pmod 4$.

(ii) \emph{$K-1$ has a rational root.} Otherwise the cubic $K_3^{(d)}-1$ is irreducible over $\Q$ and Theorem~\ref{thm:galois} applies ($d\ge3$).

(iii) \emph{Rational roots of $K-1$.} By Lemma~\ref{lem:rational-roots}, the only integers $d\ge2$ for which $K_3^{(d)}-1$ has a rational root are $d=2,6,12$.

(iv) $d=12$ gives $|X|=443$, odd, excluded by (i). For $d=6$,
\[
K_3^{(6)}(t)+1=2\bigl(40t^3+12t^2-12t-1\bigr),
\]
and the cubic $40t^3+12t^2-12t-1$ has no rational root: the candidates
\[
\pm1,\ \pm\tfrac12,\ \pm\tfrac14,\ \pm\tfrac15,\ \pm\tfrac18,\ \pm\tfrac1{10},\ \pm\tfrac1{20},\ \pm\tfrac1{40}
\]
all fail. Hence it is irreducible, and Theorem~\ref{thm:galois} applies. Directly: all $k_\alpha$ would equal $(n-1)/3=38/3$.

(v) $d=2$ is Proposition~\ref{prop:circle}.
\end{proof}

\begin{proposition}[The circle]\label{prop:circle}
For $m\ge1$, an equal-weight $2m$-design on $\Sph{1}$ with $2m+2$ points is a regular $(2m+2)$-gon; conversely the regular $(2m+2)$-gon is a $(2m+1)$-design.
\end{proposition}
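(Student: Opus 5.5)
We work on the circle: $d=2$, $D_m(2)=2m+1$, so a $2m$-design with $2m+2$ points has corank one. We use Theorem~\ref{thm:split}, then exploit the vanishing of power sums. Identify $\Sph{1}$ with the unit circle in $\C$ and write the nodes as $z_1,\dots,z_{2m+2}$. On $\Sph{1}$ the $2m$-design condition says exactly that the power sums $p_k=\sum_j z_j^k$ vanish for $1\le k\le 2m$, because the harmonics of degree $k$ are $\Re z^k$ and $\Im z^k$.

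\emph{Converse.} For the regular $(2m+2)$-gon we have $p_k=0$ unless $(2m+2)\mid k$. Hence $p_k=0$ for $1\le k\le 2m+1$, and the polygon is a $(2m+1)$-design. This part is routine.

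\emph{Direct statement.} We apply Theorem~\ref{thm:split} with $d=2$. The set $X$ splits into $X_+$ and $X_-$, each with $m+1$ points and each a spherical $m$-design. Let $f_\pm(z)=\prod_{x\in X_\pm}(z-x)$. Since $X_\pm$ has $m+1$ points and its power sums $p_1,\dots,p_m$ vanish, Newton's identities kill the elementary symmetric functions $e_1,\dots,e_m$. Therefore $f_\pm(z)=z^{m+1}-a_\pm$ with $|a_\pm|=1$, and each $X_\pm$ is a regular $(m+1)$-gon.

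It remains to relate the two polygons. The power sums of $X_\pm$ are $p_k(X_\pm)=(m+1)a_\pm^{k/(m+1)}$ when $(m+1)\mid k$, and $0$ otherwise. Taking $k=m+1\le 2m$ (valid since $m\ge1$), the vanishing of the full power sum gives $(m+1)(a_++a_-)=0$, so $a_-=-a_+$. Then $X$ is the root set of $(z^{m+1}-a)(z^{m+1}+a)=z^{2m+2}-a^2$, which is a regular $(2m+2)$-gon. Distinctness of the nodes is automatic.

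We expect no real obstacle here. The only point requiring care is that the split parts are $m$-designs with exactly $m+1$ points, so that Newton's identities determine each part completely. This mirrors the $d=2$ endgame of Theorem~\ref{thm:strength4}.
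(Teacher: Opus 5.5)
Your proposal is correct and follows essentially the same route as the paper: split via Theorem~\ref{thm:split} into two $(m+1)$-point $m$-designs, use Newton's identities to get $z^{m+1}=a_\pm$, then use the vanishing $(m+1)$-st power sum ($m+1\le 2m$) to force $a_-=-a_+$. Your explicit check of the converse via $p_k=0$ for $(2m+2)\nmid k$ fills in what the paper simply calls classical.
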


This proposition is also a special case of Hong's classification of small circle designs \cite{Hong82}. We include the following proof to exhibit how it follows from the finite-defect splitting.
\begin{proof}
$D_m=2m+1$ on $\Sph{1}$, so by Theorem~\ref{thm:split} $X$ splits into two $(m+1)$-point $m$-designs. Writing points as unit complex numbers $z_j$, an $(m+1)$-point $m$-design has vanishing power sums $p_1=\dots=p_m=0$, so by Newton's identities $e_1=\dots=e_m=0$ and $\prod_j(z-z_j)=z^{m+1}-a$: a regular $(m+1)$-gon. The union of $\{z^{m+1}=a\}$ and $\{z^{m+1}=b\}$ has $(m+1)$-st power sum $(m+1)(a+b)$, which must vanish since $m+1\le2m$; so $b=-a$ and $X=\{z^{2m+2}=a^2\}$. The converse is classical.
\end{proof}

\begin{remark}[What remains]\label{rem:what-remains}
For each fixed $m$ the remaining question is for which $d$ both $K_m^{(d)}\pm1$ are reducible over $\Q$. For $m=2$ this is an integer-point problem on two elliptic curves. For $m=3$ the cubic case is complete: Lemma~\ref{lem:rational-roots} leaves $d=2,6,12$ for $K_3^{(d)}-1$; at $d=6$ the companion polynomial is proved irreducible in the proof of Theorem~\ref{thm:strength6}, while
\[
 K_3^{(12)}(t)+1=448t^3+84t^2-84t-5
\]
is irreducible by the rational-root test (and $d=12$ is independently excluded by parity). For $m\ge4$ reducibility may occur without rational roots, and we do not pursue a uniform arithmetic classification; exact factorisations over a large range of parameters (Appendix~\ref{app:factorisation-ranges}) show that at least one of $K_m^{(d)}\pm1$ is irreducible in every parity-eligible case tested. The eigenvalue argument of Section~\ref{sec:strength4} does not extend: the parts are $m$-distance $m$-designs, and the Delsarte--Goethals--Seidel threshold $t\ge2s-2$ for an association scheme holds only for $m\le2$.
\end{remark}

\section{Corank one at every strength: a residue formula}\label{sec:residue}

Let $X\subset\Sph{d-1}$ be a spherical $2m$-design with $N=D_m+1$, where $m\ge2$ and $d\ge2$.  (The hypothesis $d\ge3$ enters only in Theorem~\ref{thm:sign-obstruction}; Remark~\ref{rem:polygon-check} uses $d=2$.)  Let $\eps:X\to\{\pm1\}$ be the sign function of Theorem~\ref{thm:split}, so
\[
 K(x\cdot y)=-\eps_x\eps_y\qquad(x\ne y),
\]
where $K=K_m^{(d)}$.  Write $P_\ell=Q_\ell^{(d)}$ and $\sigma=\sigma_d$.  The signed Schoenberg coefficients are
\begin{equation}\label{eq:signed-schoenberg}
 A_{\ell,1}:=\frac1{N^2}\sum_{x,y\in X}\eps_x\eps_yP_\ell(x\cdot y)
 =\frac1{h_\ell N^2}\sum_{r=1}^{h_\ell}\left|\sum_{x\in X}\eps_xY_{\ell r}(x)\right|^2\ge0,
\end{equation}
where $(Y_{\ell r})$ is an orthonormal basis of $\Harm_\ell$; the second equality is the addition formula, exactly as for the unsigned Schoenberg coefficients of Section~\ref{sec:def-schoenberg}.

Let
\[
 \nu_X=\frac1{N^2}\sum_{x,y\in X}\delta_{x\cdot y},
 \qquad
 \Omega(t)=(t-1)(K(t)^2-1),
\]
and, for a polynomial $F$, let $[t^{-1}]F$ denote the coefficient of $t^{-1}$ in its Laurent expansion at infinity.

\begin{lemma}[Remainder functional]\label{lem:remainder-functional}
For polynomials $g$ and $\Omega$ with $\deg\Omega\ge1$, define
\[
 \widehat\Omega(t)=\int_{-1}^1\frac{\Omega(t)-\Omega(u)}{t-u}\dd\sigma(u).
\]
Then
\[
 \int_{-1}^1(g\bmod\Omega)(u)\dd\sigma(u)
 =[t^{-1}]\frac{\widehat\Omega(t)g(t)}{\Omega(t)}.
\]
\end{lemma}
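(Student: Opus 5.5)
Write $g=q\Omega+r$ with $r=g\bmod\Omega$ and $\deg r<\deg\Omega$. The plan is to establish two facts about the functional $g\mapsto[t^{-1}]\widehat\Omega g/\Omega$: it is linear in $g$, and it vanishes on multiples of $\Omega$. Together these reduce the identity to the case $\deg g<\deg\Omega$.

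\textbf{Reduction to the remainder.} For $g=q\Omega$ the expression is $[t^{-1}]\widehat\Omega(t)q(t)$. This is the $t^{-1}$ coefficient of a polynomial, hence zero. So both sides depend only on $r$, and we may assume $\deg g<\deg\Omega$.

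\textbf{Small-degree case.} Now $g\bmod\Omega=g$. First expand
\[
\frac{\widehat\Omega(t)}{\Omega(t)}=\int_{-1}^1\frac{\dd\sigma(u)}{t-u}-\frac{1}{\Omega(t)}\int_{-1}^1\frac{\Omega(u)}{t-u}\dd\sigma(u).
\]
Expand $1/(t-u)=\sum_{k\ge0}u^kt^{-k-1}$ for $|t|>1$. Since $\sigma$ is supported in $[-1,1]$, these Laurent expansions converge and may be integrated termwise.

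The first term is the Stieltjes transform $S(t)=\sum_k\mu_kt^{-k-1}$. The $t^{-1}$ coefficient of $g(t)S(t)$ is $\sum_j g_j\mu_j=\int g\dd\sigma$, the desired answer.

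The second term is $g(t)\Omega(t)^{-1}\int\Omega(u)(t-u)^{-1}\dd\sigma(u)$. The integral is $O(t^{-1})$ at infinity, and $g/\Omega=O(t^{-1})$ because $\deg g<\deg\Omega$. Their product is $O(t^{-2})$, so its $t^{-1}$ coefficient vanishes. Combining the two terms gives the identity.

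\textbf{Main point to check.} The only care needed is the bookkeeping of Laurent expansions at infinity. Note that $\widehat\Omega$ is a polynomial, because $(\Omega(t)-\Omega(u))/(t-u)$ is a polynomial in $t$ and $u$. The splitting above is an identity of convergent Laurent series for $|t|$ large. Termwise integration is justified by uniform convergence on $[-1,1]$ once $|t|>1$.
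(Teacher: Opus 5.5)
Your proof is correct and follows the paper's argument. It uses linearity in $g$, vanishing on multiples of $\Omega$ (since $\widehat\Omega q$ is a polynomial), and the estimate $\widehat\Omega/\Omega=S(t)+O(t^{-\deg\Omega-1})$, which leaves $[t^{-1}](gS)=\int g\dd\sigma$ when $\deg g<\deg\Omega$. Your splitting $\widehat\Omega/\Omega=S-\Omega^{-1}\int\Omega(u)(t-u)^{-1}\dd\sigma(u)$ just makes explicit the error bound that the paper states without derivation.
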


\begin{proof}
Let $S(t)=\int(t-u)^{-1}\dd\sigma(u)=\sum_{j\ge0}\mu_jt^{-j-1}$.  If $n=\deg\Omega$, then
\[
 \frac{\widehat\Omega(t)}{\Omega(t)}=S(t)+O(t^{-n-1}).
\]
Both sides of the claimed identity are linear in $g$.  They vanish on multiples of $\Omega$, while for $\deg g<n$ the error term is $O(t^{-2})$ and $[t^{-1}](gS)=\int g\dd\sigma$.
\end{proof}

Define
\begin{equation}\label{eq:G-def}
 G(t)=\int_{-1}^1 (u-1)\frac{K(u)-K(t)}{u-t}\dd\sigma(u).
\end{equation}
This is a polynomial of degree at most $m-1$, with $G(1)=1-D_m$ and leading coefficient $-\operatorname{lead}(K)$.

\begin{lemma}\label{lem:omega-identity}
For every polynomial $g$,
\begin{equation}\label{eq:measure-remainder}
 \int g\dd\nu_X=\int(g\bmod\Omega)\dd\sigma
 =[t^{-1}]\frac{\widehat\Omega(t)g(t)}{\Omega(t)},
\end{equation}
and
\begin{equation}\label{eq:omega-hat}
 \widehat\Omega=GK+K^2-1.
\end{equation}
\end{lemma}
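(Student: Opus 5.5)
The plan is to establish the first equality in \eqref{eq:measure-remainder} by showing that $\nu_X$ is supported on the zeros of $\Omega$ and agrees with $\sigma$ on polynomials of degree below $\deg\Omega$. The second equality is then Lemma~\ref{lem:remainder-functional}. The identity \eqref{eq:omega-hat} will be a direct computation from the definitions of $\widehat\Omega$ and $G$.

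\textbf{Support.} For $x=y$ we have $x\cdot y=1$, a root of $t-1$. For $x\ne y$, Theorem~\ref{thm:split} gives $K(x\cdot y)=\mp1$, so $x\cdot y$ is a root of $K^2-1$. Hence $\Omega(x\cdot y)=0$ for every pair, and $\int g\dd\nu_X=\int(g\bmod\Omega)\dd\nu_X$. It then suffices to prove $\int r\dd\nu_X=\int r\dd\sigma$ for all $r$ with $\deg r<\deg\Omega=2m+1$, that is, for $\deg r\le 2m$.

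\textbf{Agreement for $\deg r\le 2m$.} Here I use the design property from Section~\ref{sec:def-kernels}: for each fixed $x$ one has $\frac1N\sum_yr(x\cdot y)=\int r\dd\sigma$ whenever $\deg r\le 2m$. Averaging over $x$ gives $\int r\dd\nu_X=\int r\dd\sigma$. This settles \eqref{eq:measure-remainder}, with Lemma~\ref{lem:remainder-functional} applied to $\Omega$ (which has degree $2m+1\ge1$).

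\textbf{The identity $\widehat\Omega=GK+K^2-1$.} Write $\Omega(t)-\Omega(u)$ as
\[
(t-u)(K(t)^2-1)+(u-1)\bigl(K(t)^2-K(u)^2\bigr).
\]
Dividing by $t-u$ and integrating, the first term contributes $(K(t)^2-1)\int\dd\sigma=K(t)^2-1$. For the second term, factor $K(t)^2-K(u)^2=(K(t)-K(u))(K(t)+K(u))$. Since $-(K(u)-K(t))/(u-t)=-(K(t)-K(u))/(t-u)$, the second term becomes
\[
-\int(u-1)\frac{K(u)-K(t)}{u-t}\bigl(K(t)+K(u)\bigr)\dd\sigma(u).
\]
Split this as $-K(t)G(t)$ plus a remainder involving $K(u)$. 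It is easiest to handle the remainder by rewriting $K(u)=K(t)+(K(u)-K(t))$. The remainder then equals
\[
-\int(u-1)\frac{(K(u)-K(t))^2}{u-t}\dd\sigma-K(t)G(t),
\]
which is a sign clash with the target, so the bookkeeping must be done carefully.

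The cleaner route is to use the reproducing property \eqref{eq:reproduce-one}. The function $(u-1)(K(u)-K(t))/(u-t)$ has degree at most $m$ in $u$. Integrating it against $K(u)\dd\sigma$ evaluates it at $u=1$, where the factor $u-1$ makes it vanish. So $\int(u-1)\frac{K(u)-K(t)}{u-t}K(u)\dd\sigma(u)=0$. The second term is therefore exactly $\pm K(t)G(t)$, and the sign comes out as $+GK$ once the orientation $(K(t)-K(u))/(t-u)=(K(u)-K(t))/(u-t)$ is tracked correctly: this difference quotient is symmetric, so there is no sign flip. Hence $\widehat\Omega=K^2-1+GK$.

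The only delicate point is this use of the reproducing property to kill the $K(u)$ term, together with the symmetric-quotient sign check. Everything else is routine.
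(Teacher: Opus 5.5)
Your proof is correct. The first half matches the paper's argument exactly: $\nu_X$ is supported on the zero set of $\Omega$, $\nu_X$ and $\sigma$ agree through degree $2m$ by the design property, and Lemma~\ref{lem:remainder-functional} finishes.

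For \eqref{eq:omega-hat} you use a different decomposition from the paper. The paper introduces $I(t)=\int\frac{K(t)-K(u)}{t-u}\dd\sigma(u)$ and proceeds in three steps:
\begin{enumerate}
\item it shows $\widehat\Omega=(t-1)KI+K-1$, by reproducing the degree-$(m-1)$ quotient $\frac{K(t)-K(u)}{t-u}$ at $u=1$ and using $\int K^2\dd\sigma=D_m$;
\item it shows separately that $G=(t-1)I-K+1$, from $u-1=(u-t)+(t-1)$ and $\int K\dd\sigma=1$;
\item it eliminates $I$.
\end{enumerate}
You instead split
\[
 \Omega(t)-\Omega(u)=(t-u)\bigl(K(t)^2-1\bigr)+(u-1)\bigl(K(t)^2-K(u)^2\bigr).
\]
After factoring the difference of squares, the $K(t)$-part is literally $K(t)G(t)$. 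The $K(u)$-part is $\int K(u)p(u)\dd\sigma(u)=p(1)=0$ for the degree-$m$ polynomial $p(u)=(u-1)\frac{K(u)-K(t)}{u-t}$.

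Both routes rest on one evaluation of the reproducing identity at $u=1$. Yours is more direct: it needs no auxiliary $I$ and no separate use of $\int K\dd\sigma=1$ or $\int K^2\dd\sigma=D_m$, only $\int\dd\sigma=1$. The paper's route has the minor advantage of expressing $\widehat\Omega$ and $G$ through the common quantity $I$.

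One clean-up is needed. In the intermediate paragraph you write the second term as $-\int(u-1)\frac{K(u)-K(t)}{u-t}\bigl(K(t)+K(u)\bigr)\dd\sigma(u)$, and that is wrong. There is no minus sign, since $\frac{K(t)^2-K(u)^2}{t-u}=\frac{K(u)-K(t)}{u-t}\bigl(K(t)+K(u)\bigr)$ directly. The ``sign clash'' you then observe is an artefact of that slip. Delete that paragraph, including the detour through $(K(u)-K(t))^2$. Your final paragraph is the proof and needs no $\pm$ hedging.
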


\begin{proof}
The diagonal of $\nu_X$ is supported at $t=1$, and every off-diagonal inner product is a zero of $K^2-1$.  Hence $\nu_X$ is supported on the zeros of $\Omega$.  The remainder modulo $\Omega$ has degree at most $2m$, so it has the same integral against $\nu_X$ and $\sigma$ by the design property.  Lemma~\ref{lem:remainder-functional} gives \eqref{eq:measure-remainder}.

Put
\[
 I(t)=\int\frac{K(t)-K(u)}{t-u}\dd\sigma(u).
\]
The reproducing identity \eqref{eq:reproduce-one} gives
\[
 \widehat\Omega(t)=(t-1)K(t)I(t)+K(t)-1.
\]
On the other hand, \eqref{eq:G-def} and $u/(t-u)=t/(t-u)-1$ give
\[
 G(t)=(t-1)I(t)-K(t)+1.
\]
Substitution proves \eqref{eq:omega-hat}.
\end{proof}

\begin{theorem}[Residue formula for the signed distribution]\label{thm:residue}
For every $\ell\ge0$,
\begin{equation}\label{eq:residue-formula}
 A_{\ell,1}=-[t^{-1}]\frac{G(t)P_\ell(t)}{(t-1)(K(t)^2-1)}.
\end{equation}
In particular $A_{\ell,1}=0$ for $\ell\le m$.  Put
\[
 q_m(d)=d^2+dm-2d-2m^2-2m.
\]
Then
\begin{align}
 A_{m+1,1}
 &=\binom{d+m-2}{m}^{-1},\label{eq:A-m1}\\
 A_{m+2,1}
 &=\frac{(d-2)(d+2m)}{d(d+2m-2)}\binom{d+m-2}{m}^{-1},\label{eq:A-m2}\\
 A_{m+3,1}
 &=\frac{(d-2)(d+2m)(d+m-1)q_m(d)}{d(d+2m-2)^2}
   \frac{(d-2)!m!}{(d+m)!}.\label{eq:A-m3}
\end{align}
\end{theorem}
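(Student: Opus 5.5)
The plan is to express $A_{\ell,1}$ as an integral against a signed distance measure, and then use the remainder calculus of Lemmas~\ref{lem:remainder-functional} and \ref{lem:omega-identity}.

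\textbf{Signed measure.} Since $K(x\cdot y)=-\eps_x\eps_y$ off the diagonal, we have $\eps_x\eps_y=-K(x\cdot y)$ for $x\ne y$, and $\eps_x\eps_x=1$. Hence
\[
 A_{\ell,1}=\int P_\ell\,(-K)\dd\nu_X+\frac{1}{N^2}\sum_{x}\bigl(1+K(1)\bigr)P_\ell(1)\cdot\frac{1}{1}\,,
\]
where the second term corrects the diagonal. It equals $(1+D_m)/N=1/N\cdot N/N$, so the correction is $N\cdot(1+D_m)/N^2=1$, because $N=D_m+1$. We therefore get
\[
 A_{\ell,1}=P_\ell(1)-\int KP_\ell\dd\nu_X .
\]

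\textbf{Residue form.} Applying \eqref{eq:measure-remainder} with $g=KP_\ell$ and \eqref{eq:omega-hat} gives
\[
 \int KP_\ell\dd\nu_X=[t^{-1}]\frac{(GK+K^2-1)KP_\ell}{\Omega}.
\]
Next I would write $K^2=(K^2-1)+1$, so that $(GK+K^2-1)K=G(K^2-1)+G+(K^2-1)K$. The terms $GP_\ell/(t-1)$ and $KP_\ell/(t-1)$ both have residue at infinity given by the value at $1$, since $[t^{-1}]\,f/(t-1)=f(1)$ for a polynomial $f$. This yields $G(1)+K(1)=1-D_m+D_m=1$ times $P_\ell(1)=1$, which cancels the $P_\ell(1)$ term and leaves
\[
 A_{\ell,1}=-[t^{-1}]\frac{GP_\ell}{\Omega},
\]
which is \eqref{eq:residue-formula}. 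I will need to check the signs carefully here. Vanishing for $\ell\le m$ should follow from degree counting: $\deg G\le m-1$ and $\deg\Omega=2m+1$, so for $\ell\le m$ the quotient is $O(t^{-2})$ and its $t^{-1}$ coefficient is zero. Alternatively, the signed identity \eqref{eq:signed} gives the same conclusion directly.

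\textbf{Explicit coefficients.} For $\ell=m+1,m+2,m+3$, I would expand $1/\Omega$ at infinity as $t^{-2m-1}\operatorname{lead}(K)^{-2}(1+c_1t^{-1}+c_2t^{-2}+\dots)$. The only inputs needed are the top three or four coefficients of $K$, $G$, and $P_\ell$, all taken from the Gegenbauer expansion (Lemma~\ref{lem:integral} and Fact~\ref{fact:jacobi}). For $\ell=m+1$, only leading terms contribute: $-\operatorname{lead}(G)\operatorname{lead}(P_{m+1})/\operatorname{lead}(K)^2=\operatorname{lead}(P_{m+1})/\operatorname{lead}(K)$. This should simplify to $\binom{d+m-2}{m}^{-1}$. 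Each further step in $\ell$ brings in one more subleading coefficient.

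\textbf{Main obstacle.} I expect the hardest step to be the closed-form simplification for $A_{m+3,1}$. There the second subleading coefficients of $K$ and $G$ enter; $G$'s coefficients involve moments $\mu_j$ through \eqref{eq:G-def}. I would compute these symbolically using the parity structure (where $K$ has alternate-parity gaps only at the top two terms) and then factor the result to extract $q_m(d)$. This is the coefficient extraction deferred to Appendix~\ref{app:identities}.
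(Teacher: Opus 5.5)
Your proposal is correct and follows the paper's proof step for step. The shared steps are the reduction $A_{\ell,1}=1-\int KP_\ell\dd\nu_X$ via $N=D_m+1$; substituting \eqref{eq:omega-hat} into \eqref{eq:measure-remainder} and writing $K^2=(K^2-1)+1$, where $[t^{-1}]f/(t-1)=f(1)$ and $G(1)+K(1)=1$ produce the cancellation; the degree count for $\ell\le m$; and the Laurent expansion at infinity for $\ell=m+1,m+2,m+3$.

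For the closed forms you still need the paper's explicit data. This means the relative coefficients $\kappa_1,\kappa_2$ of $K$, $\gamma_1,\gamma_2$ of $G$ and $\pi_2$ of $P_{m+3}$, plus the ratio \eqref{eq:lead-ratio}. It also uses two observations: $P_{m+j}$ has no $t^{m+j-1}$ term by parity, and the $-1$ in $K^2-1$ first enters at relative order $t^{-2m}$, which is beyond the order $t^{-2}$ needed once $m\ge2$.
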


\begin{proof}
Off the diagonal $\eps_x\eps_y=-K(x\cdot y)$, while $K(1)=D_m$ and $N=D_m+1$.  Therefore
\[
 A_{\ell,1}=1-\int KP_\ell\dd\nu_X.
\]
Using Lemma~\ref{lem:omega-identity}, then $K^2=(K^2-1)+1$, gives
\begin{align*}
 \int KP_\ell\dd\nu_X
 &=[t^{-1}]\frac{(GK+K^2-1)KP_\ell}{(t-1)(K^2-1)}\\
 &=G(1)+K(1)+[t^{-1}]\frac{GP_\ell}{(t-1)(K^2-1)}\\
 &=1+[t^{-1}]\frac{GP_\ell}{(t-1)(K^2-1)}.
\end{align*}
This proves \eqref{eq:residue-formula}.  Since $\deg G\le m-1$, the rational function there is $O(t^{\ell-m-2})$, proving the vanishing for $\ell\le m$.

For $\ell=m+j$, $1\le j\le3$, expand at infinity with $s=t^{-1}$:
\begin{align*}
 K&=k_mt^m(1+\kappa_1s+\kappa_2s^2+O(s^3)),\\
 G&=-k_mt^{m-1}(1+\gamma_1s+\gamma_2s^2+O(s^3)),\\
 P_{m+j}&=p_{m+j}t^{m+j}(1+O(s^2)).
\end{align*}
The exact coefficients are
\begin{gather*}
 \kappa_1=\frac{m}{d+2m-2},\qquad
 \kappa_2=-\frac{m(m-1)}{2(d+2m-2)},\\
 \gamma_1=\frac{(d-2)(m-1)}{d(d+2m-2)},\qquad
 \gamma_2=-\frac{(m-2)(dm+d-2)}{2d(d+2m-2)},
\end{gather*}
and, for $j=3$, the relative $s^2$ coefficient of $P_{m+3}$ is
\[
 \pi_2=-\frac{(m+2)(m+3)}{2(2m+d+2)}.
\]
Also
\begin{equation}\label{eq:lead-ratio}
 \frac{p_{m+j}}{k_m}
 =\frac1{h_m}\prod_{i=0}^{j-1}\frac{2m+2i+d-2}{m+i+d-2}.
\end{equation}
The term $-1$ in $K^2-1$ first appears at relative order $s^{2m}$ and therefore does not affect these three coefficients.  Coefficient extraction now gives \eqref{eq:A-m1}--\eqref{eq:A-m3}; the complete rational simplification is displayed in Appendix~\ref{app:residue-coefficients}.
\end{proof}

\begin{remark}[Consistency check on the circle]\label{rem:polygon-check}
For $d=2$ the corank-one $2m$-designs are exactly the regular $(2m+2)$-gons (Proposition~\ref{prop:circle}), with alternating signs $\eps$ around the polygon.  There $Q_\ell^{(2)}=T_\ell$, and a direct computation with roots of unity gives $A_{\ell,1}=1$ if $\ell\equiv m+1\pmod{2m+2}$ and $A_{\ell,1}=0$ otherwise.  Formula \eqref{eq:residue-formula} is valid for $d=2$ and reproduces these values; in particular \eqref{eq:A-m1}--\eqref{eq:A-m3} give $1,0,0$, the last two because of the factor $d-2$.  The ancillary script of Appendix~\ref{app:repro} verifies this for $\ell\le4m+3$ and $m\le4$.
\end{remark}

\begin{theorem}[Uniform corank-one obstruction]\label{thm:sign-obstruction}
Let $m\ge2$ and $3\le d\le m+1$.  There is no spherical $2m$-design on $\Sph{d-1}$ with $D_m+1$ points.
\end{theorem}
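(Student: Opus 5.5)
The plan is to argue by contradiction from the residue formula, Theorem~\ref{thm:residue}.  Suppose $X\subset\Sph{d-1}$ is a spherical $2m$-design with $N=D_m+1$, $m\ge2$ and $3\le d\le m+1$.  By Theorem~\ref{thm:split} the corank-one signs $\eps_x$ exist, so the signed Schoenberg coefficients $A_{\ell,1}$ of \eqref{eq:signed-schoenberg} are defined.  By the addition formula they are nonnegative: each is a normalised sum of squares $\bigl|\sum_x\eps_xY_{\ell r}(x)\bigr|^2$.  The whole proof then reduces to showing that the closed form \eqref{eq:A-m3} for $A_{m+3,1}$ is strictly negative in the stated range.

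Next comes the sign analysis of \eqref{eq:A-m3}.  For $d\ge3$ and $m\ge2$ the factors $d-2$, $d+2m$, $d+m-1$, $d$, $(d+2m-2)^2$, $(d-2)!$, $m!$ and $(d+m)!$ are all strictly positive.  Hence $\operatorname{sgn}A_{m+3,1}=\operatorname{sgn}q_m(d)$, where $q_m(d)=d^2+(m-2)d-2m(m+1)$.  As a quadratic in $d$, $q_m$ is convex.  Its value at $d=m+1$ is
\[
 (m+1)^2+(m-2)(m+1)-2m(m+1)=(m+1)(m+1+m-2-2m)=-(m+1)<0 ,
\]
and its value at $d=3$ is $9+3m-6-2m^2-2m=3+m-2m^2<0$ for $m\ge2$.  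By convexity $q_m(d)<0$ on the whole interval $3\le d\le m+1$.  The interval is nonempty exactly when $m\ge2$.  I would also check that $q_m(m+2)=m+2>0$, which shows that the range in the statement is sharp and matches the ``exactly when'' claim in the introduction.  Therefore $A_{m+3,1}<0$, contradicting $A_{m+3,1}\ge0$.

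The factor $d-2$ is where $d\ge3$ is used: at $d=2$ that factor vanishes and the regular polygons survive, consistent with Remark~\ref{rem:polygon-check}.  The special case $d=3$ gives the second statement of Theorem~\ref{thm:intro-residue}, since $D_m(3)=(m+1)^2$; here the requirement $3\le m+1$ is exactly $m\ge2$.

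The only real obstacle is the correctness of the closed form \eqref{eq:A-m3}, which I take as given from Theorem~\ref{thm:residue}.  Its derivation is the coefficient extraction at infinity using $\kappa_1,\kappa_2,\gamma_1,\gamma_2,\pi_2$ and \eqref{eq:lead-ratio}, with the simplification deferred to Appendix~\ref{app:residue-coefficients}.  Given that formula, the proof is the elementary positivity-and-sign bookkeeping above, and I would present it in a few lines.
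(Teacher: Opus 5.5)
Your proposal is correct and follows the paper's proof: nonnegativity of $A_{m+3,1}$ from the addition formula, positivity of every factor in \eqref{eq:A-m3} except $q_m(d)$, and negativity of the convex quadratic $q_m$ on $3\le d\le m+1$. The only difference is cosmetic (you check $q_m<0$ at both endpoints $d=3$ and $d=m+1$, while the paper uses $q_m(m+1)<0<q_m(m+2)$ and places the other root below zero), and one small slip in your side remark: $q_m(m+2)=2m$, not $m+2$. It is still positive, so nothing changes.
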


\begin{proof}
The left side of \eqref{eq:signed-schoenberg} is nonnegative.  In \eqref{eq:A-m3} every factor except
\[
 q_m(d)=d^2+dm-2d-2m^2-2m
\]
is positive.  This is a convex quadratic in $d$ satisfying $q_m(m+1)=-(m+1)<0$ and $q_m(m+2)=2m>0$; its other root is negative because the product of the roots is $-2m(m+1)$.  Thus $q_m(d)<0$ exactly for positive integers $d\le m+1$, contradicting $A_{m+3,1}\ge0$.
\end{proof}

\begin{corollary}\label{cor:corank-one-s2}
For every $m\ge2$, no spherical $2m$-design on $\Sph{2}$ has $(m+1)^2+1$ points.
\end{corollary}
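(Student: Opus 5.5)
This is the specialisation $d=3$ of Theorem~\ref{thm:sign-obstruction}, so the plan is simply to check that the hypotheses hold and that the point count is the claimed one.

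\emph{Step 1: the dimension count.} On $\Sph{2}$ we have $d=3$, and
\[
 D_m(3)=\binom{m+2}{m}+\binom{m+1}{m-1}=\frac{(m+2)(m+1)}{2}+\frac{(m+1)m}{2}=(m+1)^2 .
\]
Equivalently, $D_m(3)=\sum_{j\le m}(2j+1)$, since $h_j^{(3)}=2j+1$. Hence a spherical $2m$-design on $\Sph{2}$ with $(m+1)^2+1$ points has corank exactly one.

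\emph{Step 2: the range condition.} Theorem~\ref{thm:sign-obstruction} requires $3\le d\le m+1$. With $d=3$ this means $m\ge2$, which is exactly our hypothesis. Applying the theorem then rules out such a design.

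\emph{Step 3: an independent check of the sign.} It is worth confirming the sign directly from \eqref{eq:A-m3} at $d=3$. We have
\[
 q_m(3)=9+3m-6-2m^2-2m=-2m^2+m+3=-(2m-3)(m+1).
\]
This is negative for $m\ge2$. All the other factors in \eqref{eq:A-m3} are positive, so $A_{m+3,1}<0$. That contradicts the nonnegativity in \eqref{eq:signed-schoenberg}.

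There is no real obstacle here. The only care needed is the identity $D_m(3)=(m+1)^2$ and the observation that $d=3$ lies in the admissible range precisely when $m\ge2$. For $m=2$ the result also follows from Corollary~\ref{cor:ten}.
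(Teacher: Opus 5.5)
Your proposal is correct and takes the paper's route: the corollary is the case $d=3$ of Theorem~\ref{thm:sign-obstruction}, using $D_m(3)=(m+1)^2$ and the fact that $3\le m+1$ exactly when $m\ge2$. Your direct check $q_m(3)=-(2m-3)(m+1)<0$ is a correct unpacking of that theorem's proof at $d=3$.
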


\begin{remark}\label{rem:complementary}
Theorem~\ref{thm:sign-obstruction}, together with Theorems~\ref{thm:strength4} and \ref{thm:strength6}, excludes corank one whenever $m\le3$ or $d\le m+1$.  In the complementary region $m\ge4$, $d\ge m+2$, the coefficient $A_{m+3,1}$ is positive, and the residue formula gives no further sign obstruction there either: an exact computation with \eqref{eq:residue-formula} (script \path{corank1_residue_explore.py}, Appendix~\ref{app:repro}) shows that for $4\le m\le10$ and $m+2\le d\le m+12$ every coefficient $A_{\ell,1}$ with $\ell\le m+24$ is positive, the minimum over that range of $\ell$ being attained at $\ell=m+3$.  The remaining obstruction is arithmetic: the forced multiplicities of Lemma~\ref{lem:distribution} must be nonnegative integers and are constant on Galois orbits.
\end{remark}

\section{Corank two in every dimension}\label{sec:all-dimensional-corank-two}

Throughout this section $X\subset\Sph{d-1}$ is a spherical $4$-design with
\[
 N=|X|=D_2(d)+2,
 \qquad
 D_2(d)=\frac{d(d+3)}2.
\]
Recall the evaluation spaces $\mathcal E_j(X)$ (Definition~\ref{def:evaluation}), the projective lift $\widehat X\subset\PP^d_\C$, its coordinate ring $R_X$, its Hilbert function $H_X(j)=\dim(R_X)_j=\dim\mathcal E_j(X)$, and the Artinian reduction $A=R_X/(z_0)$ (Definition~\ref{def:lift}).

\subsection{Hilbert function and cubic saturation}

\begin{lemma}[Hilbert function through degree two]\label{lem:hilbert-through-two}
One has
\[
 H_X(0)=1,\qquad H_X(1)=d+1,\qquad H_X(2)=D_2(d)=N-2.
\]
The degree-two kernel is spanned by $z_1^2+\cdots+z_d^2-z_0^2$.
\end{lemma}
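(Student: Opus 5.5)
The plan is to read everything off design exactness (Definition~\ref{def:exactness}) together with the identification $H_X(j)=\dim_\C\mathcal E_j(X)$ of Definition~\ref{def:lift}. Since $X$ is a $4$-design, the evaluation map $\Pol_2^\C\to\mathcal E_2(X)$ is an isometry, hence injective. The target space therefore has dimension $\dim\Pol_2^\C=D_2(d)=N-2$, and the same argument gives $H_X(j)=\dim\Pol_j=D_j$ for $j=0,1$. Concretely, $D_0=1$ and $D_1=1+d$, because $1,x_1,\dots,x_d$ restrict to linearly independent functions on $\Sph{d-1}$. This already settles the three numerical values: $H_X(2)=N-2$ is just the definition $N=D_2(d)+2$ of the corank-two setting.

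For the kernel statement, I will work with homogeneous quadratic forms $F(z_0,\dots,z_d)$. Their space has dimension $\binom{d+2}{2}$. The map $F\mapsto (F(1,x))_{x\in X}$ has image $\mathcal E_2(X)$, of dimension $D_2(d)$. To compute the kernel dimension, write $D_2(d)=\binom{d+1}{2}+\binom{d}{1}$ (Definition~\ref{def:corank}). Then
\[
\binom{d+2}{2}-D_2(d)=\binom{d+2}{2}-\binom{d+1}{2}-d=(d+1)-d=1 ,
\]
so the kernel is one-dimensional. The form $z_1^2+\cdots+z_d^2-z_0^2$ clearly vanishes at every lifted node $[1:x]$, because $|x|=1$, and it is nonzero; hence it spans the kernel.

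An equivalent and slightly more conceptual route uses the factorisation of the evaluation map through restriction to the sphere. Evaluation of $F$ factors as $F\mapsto F(1,\cdot)|_{\Sph{d-1}}\in\Pol_2^\C$ followed by evaluation on $X$, and the second map is injective. So the kernel equals the set of quadratic forms whose dehomogenisation vanishes on the sphere. Such a dehomogenisation is a polynomial of degree at most $2$ vanishing on $\Sph{d-1}$, and these are exactly the scalar multiples of $|x|^2-1$. I expect no genuine obstacle here. The only point requiring care is this last characterisation, and it follows either from the dimension count above or from the irreducibility of $|x|^2-1$ for $d\ge2$.
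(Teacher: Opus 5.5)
Your proposal is correct and is essentially the paper's argument. Design exactness (equivalently, $\int_{\Sph{d-1}}|p|^2\dd\omega=0$ for an affine quadratic $p$ vanishing on $X$) makes evaluation injective on $\Pol_2$, and the kernel is then identified with the multiples of $|x|^2-1$. Your count $\binom{d+2}{2}-D_2(d)=1$ just makes explicit a step the paper states without proof, namely that a polynomial of degree at most two vanishing on the sphere is a scalar multiple of $|x|^2-1$.
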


\begin{proof}
If an affine polynomial $p$ of degree at most two vanishes on $X$, design exactness applied to $|p|^2$ gives $\int_{\Sph{d-1}}|p|^2\dd\omega=0$.  Thus $p$ vanishes on the sphere, and in degree at most two it is a scalar multiple of $|x|^2-1$.  The homogeneous statement and the dimensions follow.
\end{proof}

\begin{corollary}[Cubic saturation and quadratic Cayley--Bacharach]\label{lem:degree-three-saturation}
The cubic evaluation map is onto, $\mathcal E_3(X)=\C^X$, and $\widehat X$ is Cayley--Bacharach in degree two.
\end{corollary}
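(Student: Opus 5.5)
The corollary is the case $m=2$, $c=2$ of Proposition~\ref{prop:cb-hilbert}, and I would prove it by invoking that proposition and checking that its hypotheses hold here. By Lemma~\ref{lem:hilbert-through-two}, $X$ is a spherical $4$-design with $N=D_2(d)+2$. In the notation of Section~\ref{sec:finite-defect} this means $m=2$ and corank $c=N-D_2=2$. The first part of Proposition~\ref{prop:cb-hilbert} gives the Cayley--Bacharach property of $\widehat X$ in degree $m=2$. For completeness I would recall the mechanism. Suppose a quadratic form vanished on all lifted nodes but $x$. Its evaluation vector would then be a nonzero multiple of $e_x$ lying in $\mathcal E_2(X)=\ran P$, forcing $P_{xx}=1$. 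But $P_{xx}=D_2/N<1$.

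For cubic saturation I would use the second part of Proposition~\ref{prop:cb-hilbert} with $c=2\le2$. It states that the degree-$(m+1)=3$ evaluation map is onto, so $\mathcal E_3(X)=\C^X$. I would briefly retrace the argument in this setting to make sure nothing is specific to other parameters. Take $0\ne w\perp\mathcal E_3(X)$. Then $w,x_1w,\dots,x_dw$ all lie in the two-dimensional space $\mathcal E_2(X)^\perp$. Hence the affine evaluation matrix restricted to $\operatorname{supp}w$ has rank at most two, so the support lies on an affine line and meets the sphere in at most two points. A vector supported on at most two nodes cannot be orthogonal to both the constants and the linear functions, unless it vanishes. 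This contradiction gives surjectivity.

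The only step needing a little care is this final support argument, namely the case where the support has exactly two points $a\ne b$. Orthogonality to $1$ gives $w_b=-w_a$. Orthogonality to the linear functions then gives $w_a(a-b)=0$, so $w=0$. Everything else is a direct citation. As a consistency check, I would note that combined with Lemma~\ref{lem:hilbert-through-two} this yields $H_X(3)=N$, and hence the $h$-vector $(1,d,h_2,2)$ used later.
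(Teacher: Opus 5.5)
Your proposal is correct and is essentially the paper's proof: the paper proves the corollary by applying Proposition~\ref{prop:cb-hilbert} with $m=2$ and $c=2$, and your retracing of the $P_{xx}=D_2/N<1$ argument and of the support-on-a-line argument follows that proposition's proof. One small correction: that $X$ is a $4$-design with $N=D_2(d)+2$ is the standing hypothesis of the section, not something Lemma~\ref{lem:hilbert-through-two} provides.
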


\begin{proof}
Apply Proposition~\ref{prop:cb-hilbert} with $m=2$ and corank $c=2$.
\end{proof}

Lemma~\ref{lem:hilbert-through-two} and Corollary~\ref{lem:degree-three-saturation} give
\begin{equation}\label{eq:artinian-hilbert-vector}
 \operatorname{Hilb}(A)=\left(1,d,\frac{(d-1)(d+2)}2,2\right),
 \qquad A_j=0\quad(j\ge4).
\end{equation}

\subsection{The Gale circle and linear--quadratic aliases}\label{sec:alias}

Let $W=\mathcal E_2(X)^\perp$.  Theorem~\ref{thm:finite-defect} with $c=2$ identifies its projection with the Gram matrix of a circle $2$-design.  Choosing a complex coordinate on that circle gives phases $\zeta=(\zeta_x)_{x\in X}$ with $|\zeta_x|=1$ and
\begin{equation}\label{eq:circleframe}
 W_\C=\operatorname{span}\{\zeta,\bar\zeta\},
 \qquad
 \sum_{x\in X}\zeta_x=0,
 \qquad
 \sum_{x\in X}\zeta_x^2=0.
\end{equation}
Hence
\[
 \C^X=\Harm_0^\C(X)\oplus\Harm_1^\C(X)\oplus\Harm_2^\C(X)
 \oplus\C\zeta\oplus\C\bar\zeta
\]
is an orthogonal decomposition, and
\begin{equation}\label{eq:general-gale-kernel}
 K_2^{(d)}(x\cdot y)=-2\Re(\zeta_x\bar\zeta_y)\qquad(x\ne y).
\end{equation}

\begin{theorem}[Rank-two linear--quadratic aliases]\label{thm:alias}
There is a subspace $U\subset\Harm_1^\C$ with $\dim_\C U\ge d-1$ such that for every $A\in U$ there is $\eta\in\Harm_2^\C$ satisfying
\[
 A(x)=\zeta_x\eta(x)\qquad(x\in X).
\]
We call such a pair $(A,\eta)$ a \emph{linear--quadratic alias}.
\end{theorem}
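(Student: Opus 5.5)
The plan is to work with the orthogonal decomposition
\[
 \C^X=\Harm_0^\C(X)\oplus\Harm_1^\C(X)\oplus\Harm_2^\C(X)\oplus\C\zeta\oplus\C\bar\zeta
\]
recorded just before the statement. For $A\in\Harm_1^\C$, consider the pointwise product $\bar\zeta A\in\C^X$, given by $x\mapsto\bar\zeta_xA(x)$. The claim is that $\bar\zeta A$ lies in $\Harm_2^\C(X)$. If so, there is $\eta\in\Harm_2^\C$ with $\bar\zeta_xA(x)=\eta(x)$ for all $x\in X$, and multiplying by $\zeta_x$ (using $|\zeta_x|=1$) gives $A(x)=\zeta_x\eta(x)$. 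The polynomial $\eta$ is unique, because evaluation on $\Harm_2^\C$ is injective (Definition~\ref{def:exactness}). So it suffices to show that the components of $\bar\zeta A$ along the other four summands vanish on a subspace $U$ of dimension at least $d-1$.

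I would check the four summands one at a time, using only that $\zeta,\bar\zeta\in W_\C=\mathcal E_2(X)^\perp$ and that $|\zeta_x|=1$.

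\emph{Constants and linear harmonics.} For $B\in\Pol_1^\C$ (including constants),
\[
 \langle\bar\zeta A,B\rangle_X=\frac1N\sum_x\bar\zeta_x\,A(x)\overline{B(x)}.
\]
Here $A\bar B$ is the evaluation of a polynomial of degree at most two, hence lies in $\mathcal E_2(X)$, so this vanishes because $\bar\zeta\perp\mathcal E_2(X)$. Thus the $\Harm_0^\C$ and $\Harm_1^\C$ components vanish for every $A$.

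\emph{The $\bar\zeta$ direction.} We have
\[
 \langle\bar\zeta A,\bar\zeta\rangle_X=\frac1N\sum_x|\zeta_x|^2A(x)=\frac1N\sum_xA(x)=0,
\]
because $X$ is a design and $A$ is a nonconstant harmonic of degree one.

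\emph{The $\zeta$ direction.} The only remaining condition is
\[
 \langle\bar\zeta A,\zeta\rangle_X=\frac1N\sum_x\bar\zeta_x^2A(x)=0.
\]
The map $A\mapsto\sum_x\bar\zeta_x^2A(x)$ is a single $\C$-linear functional on the $d$-dimensional space $\Harm_1^\C$. I therefore take $U$ to be its kernel, which has $\dim_\C U\ge d-1$. For $A\in U$ all components of $\bar\zeta A$ outside $\Harm_2^\C(X)$ vanish, which is exactly the claim.

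I expect no serious obstacle. The one point needing care is the bookkeeping that identifies the decomposition as orthogonal, with $\C\zeta\perp\C\bar\zeta$ because $\sum_x\zeta_x^2=0$ by \eqref{eq:circleframe}. This orthogonality is what allows the $\zeta$ and $\bar\zeta$ components to be read off from separate inner products, leaving the $\bar\zeta^2$-moment as the only genuine constraint.
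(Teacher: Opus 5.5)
Your argument is correct and is essentially the paper's proof: both show that $\bar\zeta A$ is orthogonal to constants, to linear harmonics and to $\bar\zeta$, so that $\bar\zeta\Harm_1^\C\subset\Harm_2^\C(X)\oplus\C\zeta$. The only difference is presentational: you cut out $U$ explicitly as the kernel of the functional $A\mapsto\sum_x\bar\zeta_x^2A(x)$ (the same moment that defines $\lambda$ in \eqref{eq:coordinate-alias}), whereas the paper phrases this step as a dimension count on $\bar\zeta\Harm_1^\C\cap\Harm_2^\C(X)$.
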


\begin{proof}
For $A,B\in\Harm_1^\C$,
\[
 \langle\bar\zeta A,1\rangle_X=0,
 \qquad
 \langle\bar\zeta A,B\rangle_X=0,
 \qquad
 \langle\bar\zeta A,\bar\zeta\rangle_X=0.
\]
The middle identity holds because $A\bar B\in\Harm_0^\C\oplus\Harm_2^\C$, which is orthogonal to $W_\C$; the last uses $|\zeta|=1$ and the vanishing first moment.  Therefore
\[
 \bar\zeta\Harm_1^\C\subset\Harm_2^\C(X)\oplus\C\zeta.
\]
The left side has dimension $d$, so its intersection with $\Harm_2^\C(X)$ has dimension at least $d-1$.  Multiplying back by $\zeta$ proves the theorem.
\end{proof}

For each coordinate there are unique $\eta_j\in\Harm_2^\C$ and $\lambda_j\in\C$ such that
\begin{equation}\label{eq:coordinate-alias}
 \bar\zeta x_j=\eta_j+\lambda_j\zeta\quad\text{on }X,
 \qquad
 \lambda_j=\frac1N\sum_{x\in X}\bar\zeta_x^{\,2}x_j.
\end{equation}
Put $\lambda=(\lambda_j)\in\C^d$, let $e_2\in\Harm_2^\C$ be the harmonic-quadratic component of $\zeta^2$, and set
\[
 \tau=\frac1N\sum_{x\in X}\zeta_x^3.
\]

\begin{theorem}[Exact corank-two identities]\label{thm:corank-two-identities}
Let $d\ge2$.
\begin{enumerate}
\item[(a)] The coordinate aliases satisfy
\begin{equation}\label{eq:alias-isometry}
 \langle \eta_i,\eta_j\rangle_{L^2}=\frac{\delta_{ij}}d-\lambda_i\bar\lambda_j.
\end{equation}
\item[(b)] On $X$,
\begin{equation}\label{eq:zeta-square}
 \zeta^2=d(\bar\lambda\cdot x)+e_2+\tau\bar\zeta,
\end{equation}
and
\begin{equation}\label{eq:zeta-square-norm}
 d|\lambda|^2+\|e_2\|_{L^2}^2+|\tau|^2=1.
\end{equation}
In particular $|\lambda|^2\le1/d$.
\item[(c)] One has $|\tau|<1$.
\end{enumerate}
\end{theorem}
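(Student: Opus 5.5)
The plan is to read off all three parts from the orthogonal decomposition
\[
 \C^X=\Harm_0^\C(X)\oplus\Harm_1^\C(X)\oplus\Harm_2^\C(X)\oplus\C\zeta\oplus\C\bar\zeta ,
\]
combined with design exactness through degree four and the pointwise identity $|\zeta_x|=1$. Two facts are used throughout. First, $\|\zeta\|_X=\|\bar\zeta\|_X=1$ and $\langle\zeta,\bar\zeta\rangle_X=\frac1N\sum_x\zeta_x^2=0$ by \eqref{eq:circleframe}. Second, sampled inner products of elements of $\Pol_2^\C$ agree with $L^2$ inner products (Definition~\ref{def:exactness}), so $\langle x_i,x_j\rangle_X=\delta_{ij}/d$.

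\emph{Part (a).}
\begin{itemize}
\item[1.] By the proof of Theorem~\ref{thm:alias}, $\bar\zeta x_j$ has no component along $1$, along $\Harm_1^\C(X)$, or along $\bar\zeta$. Hence \eqref{eq:coordinate-alias} is exactly its orthogonal decomposition. Pairing with $\zeta$ gives $\lambda_j=\langle\bar\zeta x_j,\zeta\rangle_X=\frac1N\sum_x\bar\zeta_x^{\,2}x_j$.
\item[2.] Because $|\zeta_x|^2=1$, we have $\langle\bar\zeta x_i,\bar\zeta x_j\rangle_X=\langle x_i,x_j\rangle_X=\delta_{ij}/d$.
\item[3.] Expanding the left side of step~2 through the orthogonal decomposition of step~1, the cross terms vanish (since $\zeta\perp\Harm_2^\C(X)$) and $\|\zeta\|_X=1$. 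This gives $\langle\eta_i,\eta_j\rangle_X+\lambda_i\bar\lambda_j=\delta_{ij}/d$.
\item[4.] Exactness converts $\langle\eta_i,\eta_j\rangle_X$ into $\langle\eta_i,\eta_j\rangle_{L^2}$, which proves \eqref{eq:alias-isometry}.
\end{itemize}

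\emph{Part (b).} Project $\zeta^2$ onto each summand in turn.
\begin{itemize}
\item[1.] The constant component is $\frac1N\sum_x\zeta_x^2=0$.
\item[2.] The $\zeta$-component is $\langle\zeta^2,\zeta\rangle_X=\frac1N\sum_x\zeta_x=0$.
\item[3.] The $\bar\zeta$-component is $\langle\zeta^2,\bar\zeta\rangle_X=\frac1N\sum_x\zeta_x^3=\tau$.
\item[4.] The coordinate functions are orthogonal with norm squared $1/d$, so the $\Harm_1^\C$-component is $d\sum_j\langle\zeta^2,x_j\rangle_X\,x_j$. Since $\langle\zeta^2,x_j\rangle_X=\frac1N\sum_x\zeta_x^2x_j=\bar\lambda_j$, this component equals $d(\bar\lambda\cdot x)$.
\item[5.] The $\Harm_2^\C$-component is $e_2$ by definition. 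This yields \eqref{eq:zeta-square}.
\item[6.] Pythagoras with $\|\zeta^2\|_X=1$ gives
\[
 d^2|\lambda|^2\cdot\tfrac1d+\|e_2\|^2+|\tau|^2=1,
\]
after converting $\|e_2\|_X$ to $L^2$ by exactness. This is \eqref{eq:zeta-square-norm}, and the bound $|\lambda|^2\le1/d$ follows immediately.
\end{itemize}

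\emph{Part (c)} is the real content. I argue by contradiction, assuming $|\tau|=1$.
\begin{itemize}
\item[1.] Since $\tau$ is an average of the unit numbers $\zeta_x^3$, the equality $|\tau|=1$ forces $\zeta_x^3=\tau$ for every $x$.
\item[2.] So $\zeta$ takes values among the three cube roots $\omega_1,\omega_2,\omega_3$ of $\tau$. The moment conditions $\sum_x\zeta_x=\sum_x\zeta_x^2=0$ force the three classes $C_k$ to have equal size $N/3$.
\item[3.] The indicators are $1_{C_k}=\frac13(1+\bar\omega_k\zeta+\omega_k\bar\zeta)$. Since $\zeta,\bar\zeta\perp\mathcal E_2(X)$, each $C_k$ is itself a spherical $2$-design with $N/3$ points.
\item[4.] By \eqref{eq:general-gale-kernel}, $K_2^{(d)}(x\cdot y)=-2$ inside a class and $=1$ across classes. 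Hence each $C_k$ is a two-distance $2$-design whose inner products are the two roots of $K_2^{(d)}+2$.
\end{itemize}

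From here I would first try to copy the spectral-integrality argument of Theorem~\ref{thm:strength4} for one class $Y=C_k$.
\begin{itemize}
\item[1.] Write the Gram matrix as $(1-\beta)I+(\alpha-\beta)\mathsf A+\beta J$, impose $\mathsf G\mathbf1=0$ and $\mathsf G^2=\frac{N}{3d}\mathsf G$, and extract the valency and the eigenvalue on $\mathbf1^\perp\cap\ker\mathsf G$.
\item[2.] This eigenvalue space has dimension $N/3-d-1$, which is positive once $d\ge3$.
\item[3.] Integrality of that eigenvalue, together with the divisibility $3\mid d(d+3)/2+2$, should leave no admissible $d$.
\item[4.] For $d=2$, the same steps give a direct check on the heptagon, where $N=7$ is not divisible by $3$.
\end{itemize}
If the arithmetic does not close, the fallback uses (b) with $\lambda=0$ and $e_2=0$. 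Then $\bar\zeta x_j=\eta_j$ exactly on $X$, so the cubics $x_i\eta_j-x_j\eta_i$ vanish on $X$ and $\sum_ix_i\eta_i=\bar\zeta$ on $X$. I would then test these against the saturation $\mathcal E_3(X)=\C^X$ and the degree-four exactness, aiming for a norm contradiction with \eqref{eq:alias-isometry}. I expect this final contradiction in (c) to be the main obstacle.
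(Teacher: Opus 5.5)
Your parts (a) and (b) follow the paper's argument: multiplication by $\bar\zeta$ is unitary and degree-four exactness applies for (a), and for (b) you project $\zeta^2$ onto the five orthogonal summands and apply Pythagoras. In (c), deriving $\zeta_x^3=\tau$ from equality in the triangle inequality is also fine. The paper instead reads it off from \eqref{eq:zeta-square-norm}, which forces $\lambda=e_2=0$ and hence $\zeta^2=\tau\bar\zeta$.

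\textbf{The gap is in how you finish (c).} Once the three phase classes have equal size, you have $3\mid N$, and that is already the contradiction. You do not use it. Since $2N=d^2+3d+4\equiv d^2+1\pmod 3$ and $d^2\equiv 0$ or $1\pmod 3$, the number $N=\frac{d(d+3)}{2}+2$ is never divisible by $3$, for any $d$. You noticed this only for $d=2$, where $N=7$. This congruence is exactly how the paper closes (c).

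Instead, your proposal moves on to a spectral-integrality computation that is only asserted to ``leave no admissible $d$'', plus a fallback with no identified contradiction. As written, neither proves (c), and you yourself flag the last step as unresolved. The spectral route is also vacuous. Its eigenspace dimension is $N/3-d-1=(d^2-3d-2)/6$, which by the same congruence is never an integer. It is also negative at $d=3$, contrary to your claim that it is positive for $d\ge3$.

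Replace everything after your step~2 of (c) with the one-line congruence, and the proof is complete.
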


\begin{proof}
Multiplication by $\bar\zeta$ is unitary on $\C^X$, and degree-four exactness identifies the sampled and spherical inner products on $\Harm_2^\C$.  Expanding \eqref{eq:coordinate-alias} therefore gives
\[
 \frac{\delta_{ij}}d=\langle\bar\zeta x_i,\bar\zeta x_j\rangle_X
 =\langle \eta_i,\eta_j\rangle_{L^2}+\lambda_i\bar\lambda_j,
\]
which is (a).

Decompose $\zeta^2$ in the orthogonal decomposition following \eqref{eq:circleframe}.  The constant and $\zeta$ components vanish because $\sum\zeta_x^2=\sum\zeta_x=0$.  The linear component is $d\bar\lambda\cdot x$, the quadratic component is $e_2$, and the $\bar\zeta$ component is $\tau\bar\zeta$.  This proves \eqref{eq:zeta-square}.  Degree-four exactness gives $\|e_2\|_X=\|e_2\|_{L^2}$, so Pythagoras proves \eqref{eq:zeta-square-norm}.

Nothing so far used the value of $d$.  If $|\tau|=1$, Pythagoras forces $\lambda=e_2=0$ and $\zeta^2=\tau\bar\zeta$, hence $\zeta^3=\tau$ at every node.  The phases then lie among three equally spaced values.  The identity $\sum\zeta_x=0$ forces the three multiplicities to be equal, so $3\mid N$.  But
\[
 N=\frac{d(d+3)}2+2\not\equiv0\pmod3,
\]
a contradiction.
\end{proof}

\begin{remark}
For $d=2$ the statements of Theorem~\ref{thm:corank-two-identities} are consistent with the regular heptagon; for $d=3$ they are used in Sections~\ref{sec:circle}--\ref{sec:moment}.
\end{remark}

\subsection{The quadratic socle}\label{sec:socle}

Define
\[
 \mathscr S_X=\{q\in\Harm_2^\C:x_jq|_X\in\mathcal E_2(X)\text{ for }1\le j\le d\}.
\]

\begin{proposition}[Socle interpolation]\label{prop:socle-interpolation}
Harmonic representatives identify $\Soc(A)_2$ with $\mathscr S_X$.  For every $q\in\mathscr S_X$ there are unique $Q_1,\ldots,Q_d\in\Pol_2$ satisfying
\[
 Q_j(x)=x_jq(x)\quad(x\in X),
\]
and
\[
 \sum_{j=1}^d\|Q_j\|_{L^2(\Sph{d-1})}^2=\|q\|_{L^2(\Sph{d-1})}^2.
\]
\end{proposition}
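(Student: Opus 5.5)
The plan is to read the degree-two part of the Artinian reduction directly off the evaluation spaces. Then the socle condition becomes the stated interpolation condition, and the norm identity is a single application of design exactness through degree four.

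\emph{Identification of $\Soc(A)_2$.} By Definition~\ref{def:lift}, $(R_X)_j\cong\mathcal E_j(X)$ via dehomogenisation, and multiplication by $z_0$ becomes the inclusion $\mathcal E_{j-1}(X)\subset\mathcal E_j(X)$. Hence
\[
A_2\cong\mathcal E_2(X)/\mathcal E_1(X).
\]
By Definition~\ref{def:exactness}, $\mathcal E_2(X)=\Harm_0^\C(X)\oplus\Harm_1^\C(X)\oplus\Harm_2^\C(X)$ is orthogonal and evaluation is injective there. So each class in $A_2$ has a unique harmonic representative $q\in\Harm_2^\C$. This also recovers $\dim A_2=h_2$, consistent with \eqref{eq:artinian-hilbert-vector}.

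Since $A$ is generated in degree one, the class of $q$ lies in $\Soc(A)_2$ iff $z_j\cdot[q]=0$ in $A_3$ for $1\le j\le d$. Under the same dictionary, $A_3\cong\mathcal E_3(X)/\mathcal E_2(X)$, so this means $x_jq|_X\in\mathcal E_2(X)$, which is exactly the condition defining $\mathscr S_X$. The condition does not depend on the representative: changing $q$ by an affine function $\ell$ changes $x_jq$ by $x_j\ell\in\mathcal E_2(X)$. It therefore suffices to use harmonic representatives.

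\emph{Interpolants.} For $q\in\mathscr S_X$, each vector $x_jq|_X$ lies in $\mathcal E_2(X)$, so some $Q_j\in\Pol_2^\C$ satisfies $Q_j(x)=x_jq(x)$ on $X$. It is unique because evaluation on $\Pol_2^\C$ is injective (Definition~\ref{def:exactness}).

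\emph{Norm identity.} Since $Q_j\in\Pol_2$, the product $|Q_j|^2$ has degree at most four, and exactness gives $\|Q_j\|_{L^2}^2=\|Q_j\|_X^2$. Summing over $j$ and using $|x|=1$ on $X$,
\[
\sum_j\|Q_j\|_X^2=\frac1N\sum_{x\in X}\sum_j x_j^2|q(x)|^2=\|q\|_X^2 .
\]
Finally, $\|q\|_X^2=\|q\|_{L^2}^2$, again by exactness applied to $|q|^2$, which has degree four.

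The only delicate point is the first step. One must check that the socle condition in $A$ (annihilation by $A_1$ modulo $z_0$) translates exactly to membership in $\mathcal E_2(X)$ rather than $\mathcal E_3(X)$. This requires correct bookkeeping of $z_0$-multiplication as the inclusion of evaluation spaces. Everything after that is immediate from exactness through degree four.
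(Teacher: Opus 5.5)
Your proposal is correct and follows essentially the same route as the paper. You use the unique harmonic representative in $A_2\cong\mathcal E_2(X)/\mathcal E_1(X)$, translate the socle condition into $x_jq|_X\in\mathcal E_2(X)$, obtain uniqueness of the $Q_j$ from injectivity of evaluation on $\Pol_2^\C$, and get the norm identity from design exactness through degree four together with $\sum_j x_j^2=1$. Your extra bookkeeping of $z_0$-multiplication as the inclusion of evaluation spaces simply makes explicit what the paper leaves implicit.
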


\begin{proof}
Because evaluation is injective on $\Pol_2$ and $\mathcal E_2(X)=\mathcal E_1(X)\oplus\Harm_2^\C(X)$, every class in $A_2$ has a unique harmonic-quadratic representative.  The socle condition is precisely $x_jq|_X\in\mathcal E_2(X)$ for each coordinate.  Uniqueness of the $Q_j$ follows.  Finally, design exactness through degree four gives
\[
 \sum_j\|Q_j\|_2^2
 =\frac1N\sum_{x\in X}|q(x)|^2\sum_jx_j^2
 =\frac1N\sum_{x\in X}|q(x)|^2
 =\|q\|_2^2.
\]
\end{proof}

\begin{theorem}[Forced quadratic socle]\label{thm:forced-quadratic-socle}
One has
\[
 \dim_\C\Soc(A)_2\ge\max\left\{0,\frac{d^2-3d-2}{2}\right\}.
\]
In particular $A$ is not level when $d\ge4$.
\end{theorem}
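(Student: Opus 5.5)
The bound should follow from a dimension count on the Artinian reduction, using only the Hilbert vector \eqref{eq:artinian-hilbert-vector}. Since $A$ is generated in degree one, Definition~\ref{def:lift} gives $\Soc(A)_2=\{a\in A_2:aA_1=0\}$. This is exactly the kernel of the linear map
\[
 \mu:A_2\longrightarrow\operatorname{Hom}_\C(A_1,A_3),\qquad a\longmapsto(\ell\mapsto a\ell),
\]
induced by multiplication in $A$. No further structure of $X$ is needed for the inequality itself.

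By Lemma~\ref{lem:hilbert-through-two} and Corollary~\ref{lem:degree-three-saturation}, the Hilbert vector is $(1,d,(d-1)(d+2)/2,2)$. So $\dim A_2=(d-1)(d+2)/2$, while $\dim\operatorname{Hom}(A_1,A_3)=2d$. Rank--nullity then gives
\[
 \dim\Soc(A)_2=\dim\ker\mu\ge\frac{(d-1)(d+2)}2-2d=\frac{d^2-3d-2}2 .
\]
Combining this with the trivial bound $\dim\ge0$ yields the stated maximum. If a harmonic description is wanted, Proposition~\ref{prop:socle-interpolation} transports the statement to $\mathscr S_X$: the condition $x_jq|_X\in\mathcal E_2(X)$ says precisely that the image of $x_jq$ in $A_3\cong\C^X/\mathcal E_2(X)$ vanishes.

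For the non-levelness claim, take $d\ge4$. Then $(d^2-3d-2)/2\ge(16-12-2)/2=1$, so $\Soc(A)_2\ne0$. The top nonzero degree of $A$ is $3$, because $A_3$ has dimension $2$. Hence the socle is not concentrated in the top degree, and $A$ is not level.

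There is no genuine obstacle here. The only points to check are that the socle is cut out by degree-one elements alone, which holds because $A$ is standard graded, and that $A_3$ has dimension exactly $2$, which comes from cubic saturation.
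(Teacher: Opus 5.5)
Your proof is correct and is the paper's argument: the paper also identifies $\Soc(A)_2$ as the kernel of the multiplication map $A_2\to\operatorname{Hom}_\C(A_1,A_3)$ and applies rank--nullity with the Hilbert vector \eqref{eq:artinian-hilbert-vector}. Your explicit check that $(d^2-3d-2)/2\ge1$ for $d\ge4$, together with the top degree being $3$, supplies the non-levelness step that the paper leaves implicit.
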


\begin{proof}
Multiplication gives
\[
 A_2\longrightarrow\operatorname{Hom}_\C(A_1,A_3),
 \qquad q\longmapsto(a\mapsto aq),
\]
whose kernel is exactly $\Soc(A)_2$.  By \eqref{eq:artinian-hilbert-vector}, the source has dimension $(d-1)(d+2)/2$ and the target has dimension $2d$.  Rank--nullity gives the result.
\end{proof}

\begin{proposition}[The socle through the aliases]\label{prop:socle-alias}
The quadratic socle is
\begin{equation}\label{eq:socle-alias-orthogonal}
 \mathscr S_X=\operatorname{span}\{\eta_1,\ldots,\eta_d,\bar\eta_1,\ldots,\bar\eta_d\}^{\perp}
 \cap\Harm_2^\C.
\end{equation}
Consequently
\[
 \dim\mathscr S_X=h_2-\rank\{\eta_j,\bar\eta_j:1\le j\le d\}
 \ge h_2-2d=\frac{d^2-3d-2}{2},
\]
which recovers the bound of Theorem~\ref{thm:forced-quadratic-socle}.
\end{proposition}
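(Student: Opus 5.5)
The plan is to characterise $q\in\Harm_2^\C$ lying in $\mathscr S_X$ by orthogonality. We work inside the orthogonal decomposition
\[
 \C^X=\mathcal E_2(X)\oplus\C\zeta\oplus\C\bar\zeta
\]
that follows \eqref{eq:circleframe}. By definition, $q\in\mathscr S_X$ exactly when $x_jq|_X\perp W_\C=\operatorname{span}\{\zeta,\bar\zeta\}$ for every $j$. So the condition is
\[
 \langle x_jq,\zeta\rangle_X=0\quad\text{and}\quad\langle x_jq,\bar\zeta\rangle_X=0\qquad(1\le j\le d).
\]
The first step is to rewrite these $2d$ scalar conditions through the coordinate aliases \eqref{eq:coordinate-alias}.

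For the $\zeta$-conditions, compute
\[
 \langle x_jq,\zeta\rangle_X=\langle q,\bar\zeta x_j\rangle_X,
\]
using that $x_j$ is real. By \eqref{eq:coordinate-alias}, $\bar\zeta x_j=\eta_j+\lambda_j\zeta$ on $X$. Since $q|_X\in\Harm_2^\C(X)\subset\mathcal E_2(X)$ is orthogonal to $\zeta$, this becomes $\langle q,\eta_j\rangle_X$. Degree-four exactness turns this into $\langle q,\eta_j\rangle_{L^2}$, because $q\bar\eta_j$ has degree four.

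For the $\bar\zeta$-conditions, conjugate \eqref{eq:coordinate-alias}: on $X$ we have $\zeta x_j=\bar\eta_j+\bar\lambda_j\bar\zeta$. Then
\[
 \langle x_jq,\bar\zeta\rangle_X=\langle q,\zeta x_j\rangle_X=\langle q,\bar\eta_j\rangle_X+\lambda_j\langle q,\bar\zeta\rangle_X=\langle q,\bar\eta_j\rangle_{L^2}.
\]
Here $\bar\eta_j\in\Harm_2^\C$ because $\Harm_2$ is real, so its complexification is closed under conjugation. Together the two computations give $\mathscr S_X=\{\eta_j,\bar\eta_j\}^\perp\cap\Harm_2^\C$, which is \eqref{eq:socle-alias-orthogonal}. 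The identification of $\mathscr S_X$ with $\Soc(A)_2$ is already Proposition~\ref{prop:socle-interpolation}.

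The dimension formula then follows by taking the orthogonal complement, with respect to the $L^2$ inner product, of a span of $2d$ vectors inside $\Harm_2^\C$, which has dimension $h_2=\frac{(d-1)(d+2)}2$. Finally, $h_2-2d=\frac{d^2-3d-2}{2}$, which recovers the bound of Theorem~\ref{thm:forced-quadratic-socle}.

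No step is a genuine obstacle. The only care needed is to justify each orthogonality used: $q\perp\zeta,\bar\zeta$ in $\C^X$ (by the decomposition above), and sampled $=$ $L^2$ inner products on $\Harm_2^\C$ (by degree-four exactness). The rest is bookkeeping of conjugations in the sesquilinear form.
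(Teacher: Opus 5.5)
Your argument is correct and is essentially the paper's proof: you turn the socle condition into orthogonality of $x_jq$ to $\zeta$ and $\bar\zeta$, rewrite it through the coordinate aliases \eqref{eq:coordinate-alias} and their conjugates, and pass to $L^2$ by degree-four exactness. There is one harmless conjugation slip: since $x_j$ is real, $\langle x_jq,\zeta\rangle_X=\langle q,\zeta x_j\rangle_X=\langle q,\bar\eta_j\rangle_{L^2}$ and $\langle x_jq,\bar\zeta\rangle_X=\langle q,\bar\zeta x_j\rangle_X=\langle q,\eta_j\rangle_{L^2}$, so your two left-hand sides are interchanged, which does not matter because both conditions are imposed for every $j$.
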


\begin{proof}
By Proposition~\ref{prop:socle-interpolation}, $q\in\mathscr S_X$ exactly when $x_jq$ is orthogonal to both $\zeta$ and $\bar\zeta$ for every $j$.  Conjugating \eqref{eq:coordinate-alias} and using $q\perp W_\C$ gives
\[
 \langle x_jq,\zeta\rangle_X=\langle q,\bar\eta_j\rangle_{L^2},
 \qquad
 \langle x_jq,\bar\zeta\rangle_X=\langle q,\eta_j\rangle_{L^2}.
\]
This is \eqref{eq:socle-alias-orthogonal}, and the dimension statement follows.
\end{proof}

\begin{remark}[Why ordinary Cayley--Bacharach is insufficient]\label{rem:quadratic-socle}
Quadratic Cayley--Bacharach excludes a separator supported at one node, but it does not exclude a general degree-two socle element.  Theorem~\ref{thm:forced-quadratic-socle} shows that such elements are forced for $d\ge4$.  Any higher-dimensional classification must therefore use the spherical evaluation and multiplication structure, not an unsupported inference from Cayley--Bacharach to levelness.
\end{remark}

\begin{remark}[Where the three-dimensional proof stops]\label{rem:higher-stop}
With $d-1$ aliases, the matrix $HH^*-r^2AA^*$ is $(d-1)\times(d-1)$ but has rank at most two, so its full determinant vanishes for $d\ge4$.  A two-alias compression has determinant $-r^2|\det(A,H)|^2$; even in a branch where the cubic determinant can be made real, its intersection with a scalar quartic in $\PP^{d-1}$ has dimension $d-3$ rather than zero.  Thus the length-twelve Cayley--Bacharach argument below is genuinely three-dimensional.  Theorem~\ref{thm:corank-two-identities} records the exact identities that survive in every dimension; closing $d\ge4$ requires additional polynomial-ring structure.
\end{remark}

\subsection{The circle}

\begin{proposition}[The regular heptagon]\label{prop:regular-heptagon}
Every seven-point spherical $4$-design on $\Sph{1}$ is a regular heptagon, up to rotation.
\end{proposition}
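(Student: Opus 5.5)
The plan is to argue directly with power sums and the self-inversive property of polynomials whose roots lie on the unit circle, as in the proof of Proposition~\ref{prop:circle}. I will not need the Gale circle or the aliases here. Identify $\Sph{1}$ with the unit circle in $\C$ and write the seven nodes as distinct unit complex numbers $z_1,\dots,z_7$. The real and imaginary parts of $z^k$ are harmonic polynomials of degree $k$ on $\Sph{1}$ with mean zero for $1\le k\le4$. So the $4$-design condition says exactly that the power sums $p_k=\sum_j z_j^k$ vanish for $k=1,2,3,4$.

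First, Newton's identities turn $p_1=\dots=p_4=0$ into $e_1=e_2=e_3=e_4=0$ for the elementary symmetric functions of the $z_j$. Hence
\[
 f(z):=\prod_{j=1}^7(z-z_j)=z^7+az^2+bz+c,
\]
with $a=-e_5$, $b=e_6$ and $c=-e_7$. Note that $|c|=\prod_j|z_j|=1$, so $c\ne0$.

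Second, I will use that all roots of $f$ lie on the unit circle. Form the reciprocal polynomial $f^*(z)=z^7\overline{f(1/\bar z)}=\bar c z^7+\bar b z^6+\bar a z^5+1$. Since $|z_j|=1$, we have $1/\bar z_j=z_j$, so every root of $f$ is a root of $f^*$. Because $\bar c\ne0$, $f^*$ also has degree $7$. Both polynomials are determined by their seven roots up to a scalar, so $f^*=\bar c\,f$. Comparing the coefficients of $z^6$ and $z^5$ gives $\bar b=0$ and $\bar a=0$.

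Therefore $f(z)=z^7+c$ with $|c|=1$. Its roots are the seven vertices of a regular heptagon, which proves the proposition up to rotation. For completeness I will also record the easy converse: the regular heptagon has $p_k=0$ for $1\le k\le6$, so it is in fact a $6$-design.

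There is no serious obstacle. The only points to check are the exact translation of the $4$-design condition into $p_1=\dots=p_4=0$, and the nonvanishing of $c$, which is what makes $f^*$ have full degree and allows the coefficient comparison.
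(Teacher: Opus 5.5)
Your proof is correct and is essentially the paper's argument. Newton's identities kill $e_1,\dots,e_4$, and your coefficient comparison $f^*=\bar c\,f$ is exactly the self-inversive relation $e_{7-j}=e_7\overline{e_j}$ that the paper uses to get $e_5=e_6=0$, leaving $z^7-e_7$ with $|e_7|=1$.
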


This is a special case of Hong's circle classification \cite{Hong82}; we include the short proof.
\begin{proof}
Identify $\Sph{1}$ with the unit complex numbers and write the nodes as $z_1,\ldots,z_7$.  Exactness gives $\sum_i z_i^j=0$ for $1\le j\le4$.  Newton's identities give $e_1=\cdots=e_4=0$, and $|z_i|=1$ gives $e_{7-j}=e_7\overline{e_j}$, hence $e_5=e_6=0$.  Thus $\prod_i(z-z_i)=z^7-e_7$ with $|e_7|=1$, so the nodes form a regular heptagon.
\end{proof}

We now specialise to $d=3$ and $N=11$.

\section{The eleven-point Gale circle}\label{sec:circle}

The goal of Sections~\ref{sec:circle}--\ref{sec:moment} is the following theorem.

\begin{theorem}\label{thm:eleven}
There is no eleven-point equal-weight spherical $4$-design on $\Sph{2}$.
\end{theorem}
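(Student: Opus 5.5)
We argue by contradiction, following the outline in the introduction. Suppose $X\subset\Sph{2}$ is an eleven-point $4$-design; then $D_2(3)=9$ and the corank is $c=2$. By Section~\ref{sec:alias}, with phases $\zeta_x$, we have $K(x\cdot y)=-2\Re(\zeta_x\bar\zeta_y)$, the cubic evaluation map is onto, and the Hilbert vector of the Artinian reduction is $(1,3,5,2)$.

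\textbf{Step 1: two aliases and the cubic $R$.} Theorem~\ref{thm:alias} gives a two-dimensional space $U\subset\Harm_1^\C$ of aliases. Choose a basis $(A_1,\eta_1),(A_2,\eta_2)$ with $A_j(x)=\zeta_x\eta_j(x)$ on $X$. Eliminating $\zeta$ gives $A_1\eta_2-A_2\eta_1=0$ on $X$. Put $R=\det(A,H)$, a cubic vanishing on all eleven nodes. Since $\mathcal E_3(X)=\C^X$ and $\mathcal E_2$ is injective, we show that $R$ is nonzero. Otherwise $A_1\eta_2=A_2\eta_1$ as polynomials, which forces a common factor and hence $\zeta$ to be the restriction of a polynomial of low degree, contradicting $\zeta\perp\mathcal E_2(X)$ via the Pythagorean identities of Theorem~\ref{thm:corank-two-identities}. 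After a unitary change of basis of $U$ we normalise $R$ to be real, and we check that its harmonic projection is the relevant object.

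\textbf{Step 2: the Hermitian quartic matrix.} From $|\zeta|=1$ we get $|A_j|^2=|\eta_j|^2$ on $X$. More generally, $\mathcal F:=HH^*-r^2AA^*$, where $r^2=|x|^2$ is homogenised, vanishes on the eleven nodes entrywise. Here $\mathcal F$ is a $2\times2$ Hermitian matrix of quartics with $\det\mathcal F=-r^2|R|^2=-r^2R^2$.

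\textbf{Step 3: the generic branch.} Generically a scalar compression $v^*\mathcal Fv$ is a quartic that, together with the cubic $R$, cuts out a $(3,4)$ complete intersection of length $12$ in the projective plane of directions. This intersection contains the eleven nodes plus one further point. The Cayley--Bacharach theorem of \cite{EGH96} (degree $3+4-3=4$) applied to the other entries of $\mathcal F$ forces all of $\mathcal F$ to vanish at the twelfth point. We expect this to force $\mathcal F$ into a rank-two factorisation incompatible with $\det\mathcal F=-r^2R^2$, and the nonreduced case must be handled separately.

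\textbf{Step 4: the exceptional branch.} When no compression is a complete intersection, $\mathcal F=R\,\mathcal L$ with $\mathcal L$ a Hermitian linear pencil satisfying $\det\mathcal L=-r^2$. Write $\mathcal L=\sum_i\ell_i\sigma_i$ in Pauli normal form, with $\ell$ an orthogonal frame of linear forms. Compare this with the second-moment identities \eqref{eq:alias-isometry} and \eqref{eq:zeta-square-norm} to reach a numerical contradiction; the outline predicts the inequality $11/3\ge11/2$.

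We expect the main obstacle to be Step 3: proving that the chosen scalar quartic meets $R$ in exactly twelve distinct points, and making the matrix-valued Cayley--Bacharach conclusion rigorous, including the degenerate branches.
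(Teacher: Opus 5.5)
Your outline follows the paper's architecture: Gale circle, two aliases, the cubic $R$, $\FF$ with $\det\FF=-r^2R^2$, then a generic and an exceptional branch. But the steps that carry the proof are either missing or only asserted.

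\textbf{Missing incidence bounds, and what breaks without them.} The first missing ingredient is the pair of bounds in Lemma~\ref{lem:line} (at most four nodes on a line, by Cauchy--Schwarz with $\sum\ell(x_i)^2=\tfrac{11}{3}|u|^2$ and $\sum\ell(x_i)^4=\tfrac{11}{5}|u|^4$) and Lemma~\ref{lem:conic} (at most eight nodes on a conic).
\begin{itemize}
\item Step~1 fails without them. A change of basis $S$ of the alias pair multiplies $R=\det(A,H)$ only by the scalar $\det S$, so no basis change makes a genuinely complex cubic real. You must prove that $\Re C$ and $\Im C$ are proportional. The paper does this by B\'ezout (two real cubics with eleven common zeros share a factor), then uses the two bounds to rule out a common line or conic (Lemma~\ref{lem:reality}).
\item You need $C$ itself to be harmonic, not merely its projection. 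Lemma~\ref{lem:harmonic} shows this: $C$ vanishes on $X$, so exactness kills its $r^2\Harm_1$ part. It matters because $\Delta R=0$ is what drives the exceptional branch.
\item The same bounds give Lemma~\ref{lem:dichotomy}: $\gcd(R,f_v)$ is either $1$ or $R$. Without it, ``no compression is a complete intersection'' only says each $f_v$ shares some factor with $R$. That does not yield $\FF=R\LL$ with $\det\LL=-r^2$.
\item Your argument for $C\ne0$ overlooks that $A_1,A_2$ may vanish together at one node $x_0$, so $M=\bar\zeta$ holds only at ten nodes. This is repairable: orthogonality of $\bar\zeta$ to $M$ then forces $|M(x_0)|=10$ and $\|M\|_X^2=10$, whereas $|M(x_0)|^2\le 3\|M\|_{L^2}^2=30$.
\end{itemize}

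\textbf{Generic branch.} Cayley--Bacharach is only the first move, and ``we expect a rank-two factorisation'' is not an argument.
\begin{itemize}
\item The paper chooses $v$ with $\gcd(R,f_v)=1$ and also $v^*A(x_i)\ne0$ at every node where $A(x_i)\ne0$. It then rotates so that $f=f_v$ is the $(1,1)$ entry and $g$ the off-diagonal entry.
\item It shows that $g$ vanishes on the whole length-$12$ scheme $Z=V(R,f)$. If the residual point is new, this is \cite{EGH96}. If instead some node $p$ has length two, it uses the tangent identity $d\FF_p(w)=2\Re(\bar\vartheta c)A(p)A(p)^*$ together with $df_p(w)=0$ and $A_1(p)\ne0$. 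This is exactly where the extra condition on $v$ is used.
\item Since $(R,f)$ is saturated, $g=af+\mu R$. Reducing $g\bar g=fe+r^2R^2$ modulo $f$ gives $f\mid\mu\bar\mu-r^2$. Degree forces $\mu\bar\mu=r^2$, which is impossible: the left side has real rank at most two and $r^2$ has rank three.
\end{itemize}
Making $\FF$ vanish at the twelfth point is not the target; you need $g$ in the ideal $(R,f)$.

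\textbf{Exceptional branch.} The identities \eqref{eq:alias-isometry} and \eqref{eq:zeta-square-norm} are not the source of the contradiction. For the two aliases in hand, the isometry holds identically in the final normal form; for example $\|x^2-ixy-z^2\|_{L^2}^2=\tfrac13$. What is needed is the full classification:
\begin{itemize}
\item the Pauli normalisation itself, which requires the signature-$(1,3)$ congruence of Lemma~\ref{lem:pauli};
\item $H=\LL A^\perp$;
\item harmonicity of $H$, giving the spinor form \eqref{eq:Bform};
\item $\Delta R=0$, giving $BB^*=\lambda I_2$;
\item the $SU_2\to SO_3$ reduction and spin-$3/2$ rotation to $A=(x,z)^T$, $R=z(z^2-3x^2)$.
\end{itemize}
Only then do you get the pointwise relation $\Im\zeta_i=y_i$, or $y_i^2=1$ at a node where $A=0$. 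The contradiction is $\tfrac{11}{3}=\sum y_i^2\ge\sum(\Im\zeta_i)^2=\tfrac{11}{2}$. The left equality is the $2$-design identity; the right one follows from $|\zeta_i|=1$ and $\sum\zeta_i^2=0$.
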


\medskip
\noindent\textbf{Standing assumption for Sections~\ref{sec:circle}--\ref{sec:moment}.}
Assume for contradiction that
\[
X=\{x_1,\dots,x_{11}\}\subset\Sph{2}
\]
is a spherical $4$-design; so $d=3$, $m=2$, $N=11$, $D_2=9$ and the corank is $c=2$.  Write $t_{ij}=x_i\cdot x_j$.  The proof is organised as follows: Section~\ref{sec:circle} sets up the Gale circle and proves a line bound and a conic bound; Section~\ref{sec:cubic} produces a nonzero real harmonic cubic $R$ vanishing at the nodes; Section~\ref{sec:hermitian} packages the aliases into a Hermitian quartic matrix $\FF$ with $\det\FF=-r^2R^2$; Section~\ref{sec:generic} excludes the branch in which some scalar compression of $\FF$ is coprime to $R$ (Proposition~\ref{prop:generic}); Sections~\ref{sec:pauli}--\ref{sec:moment} exclude the complementary branch $\FF=R\LL$ (Proposition~\ref{prop:exceptional}). The degree-two evaluation space has dimension nine, and its complement has rank two. The circle-frame representation \eqref{eq:circleframe} gives phases $\zeta_i\in\Sph{1}\subset\C$ satisfying
\begin{equation}\label{eq:zetasums}
\sum_{i=1}^{11}\zeta_i=0,\qquad \sum_{i=1}^{11}\zeta_i^2=0,
\end{equation}
and, for $i\ne j$,
\begin{equation}\label{eq:gale-kernel}
\Re(\zeta_i\bar\zeta_j)=\kappa(t_{ij}),\qquad \kappa(t)=\frac{3-6t-15t^2}{4}.
\end{equation}
Indeed, the complementary projection is $(2/11)\Gamma$, where $\Gamma_{ij}=\Re(\zeta_i\bar\zeta_j)$, and $P_{ij}=K(t_{ij})/11$ with $K$ from \eqref{eq:K2d3}.

Because $\kappa(1)=-9/2$ and $\kappa(-1)=-3/2$, no two nodes are equal or antipodal. Thus they determine eleven distinct points of $\PP^2$.

\subsection{Line and conic bounds}

\begin{lemma}[Line bound]\label{lem:line}
A real projective line contains at most four nodes of $X$.
\end{lemma}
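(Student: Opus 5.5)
The bound follows from the second and fourth moments of one linear form; no algebraic geometry is needed. The nodes give points of $\PP^2$ through their directions $[x]$. A real projective line is then the zero set of a real linear form $\ell(x)=a\cdot x$, which we normalise by $|a|=1$. Geometrically, the nodes on the line are those on the great circle $a^\perp\cap\Sph{2}$. Suppose, for contradiction, that $k\ge5$ nodes satisfy $\ell(x)=0$. The other $11-k\le6$ nodes carry values $a_i=\ell(x_i)$, and we will show these values cannot have the moments that design exactness requires.

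The moment identities come from the design property through degree four, using the moments \eqref{eq:moments} with $d=3$. Since $\ell^2$ and $\ell^4$ are polynomials of degree at most four,
\[
 \sum_{x\in X}\ell(x)^2=11\,\mu_2=\frac{11}{3},
 \qquad
 \sum_{x\in X}\ell(x)^4=11\,\mu_4=\frac{11}{5}.
\]
Here $\mu_2=1/3$ and $\mu_4=3/(3\cdot5)=1/5$. The nodes on the line contribute nothing, so both sums run only over the $11-k$ values $a_i$.

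Apply Cauchy--Schwarz to the numbers $a_i^2$:
\[
 \frac{11}{5}=\sum_i a_i^4\ \ge\ \frac{\bigl(\sum_i a_i^2\bigr)^2}{11-k}
 =\frac{121/9}{11-k}.
\]
This forces $11-k\ge \frac{121\cdot5}{9\cdot 11}=\frac{55}{9}>6$, so $k\le4$. That contradicts $k\ge5$ and proves the lemma.

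The only real step is choosing which test polynomials to use. A Cayley--Bacharach-type count with quadrics of the form $\ell\cdot L$ gives just four free parameters, which is not enough. The even powers $\ell^2$ and $\ell^4$ work because they stay within the exactness range, degree at most four. The numerical margin is small ($55/9$ against $6$), so the constants must be checked carefully. No case split is needed, and neither the Gale phases nor the distinctness of nodes is used beyond the fact that the nodes are points of $\Sph{2}$.
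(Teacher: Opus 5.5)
Your proof is correct and is essentially the paper's argument: the paper also applies Cauchy--Schwarz to the exact moments $\sum_i\ell(x_i)^2=\tfrac{11}{3}|u|^2$ and $\sum_i\ell(x_i)^4=\tfrac{11}{5}|u|^4$. It concludes, as you do, that at least $55/9>6$, hence at least seven, of the values are nonzero, so at most four nodes lie on the line.
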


\begin{proof}
Let $\ell(x)=u\cdot x$ be nonzero. Exactness gives
\[
\sum_i\ell(x_i)^2=\frac{11}3|u|^2,\qquad \sum_i\ell(x_i)^4=\frac{11}5|u|^4 .
\]
If $N_0$ of the values are nonzero, Cauchy--Schwarz gives
\[
N_0\ge\frac{(\sum_i\ell(x_i)^2)^2}{\sum_i\ell(x_i)^4}=\frac{55}9>6 .
\]
Hence at least seven values are nonzero.
\end{proof}

\begin{lemma}[Conic bound]\label{lem:conic}
A nonzero real projective conic contains at most eight nodes of $X$.
\end{lemma}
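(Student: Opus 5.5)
The plan is to imitate the line bound. The key tool is design exactness in degree three, which is available because $X$ is a $4$-design and $x_jQ$ has degree three, rather than any moment inequality. Let the conic be $\{Q=0\}$ with $Q$ a nonzero real homogeneous quadratic form on $\R^3$. Suppose for contradiction that it contains at least nine nodes. Then $Q(x_k)=0$ for all but at most two indices, say $i$ and $j$.

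First I dispose of the degenerate possibility $Q=c|x|^2$. Such a $Q$ equals the nonzero constant $c$ on $\Sph2$, so it contains no node at all. Hence $Q|_{\Sph2}$ is a nonzero element of $\Pol_2$. Moreover, it is not a multiple of $|x|^2-1$ as an affine quadratic.

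Next I use odd moments. Each coordinate $x_m Q(x)$ is a homogeneous cubic. Since $X$ is a $4$-design and cubics are odd, its sum over $X$ equals $11\int_{\Sph2}x_mQ\dd\omega=0$. Writing $a=Q(x_i)$ and $b=Q(x_j)$, this yields the vector identity
\[
 a\,x_i+b\,x_j=\sum_{k=1}^{11}Q(x_k)\,x_k=0 .
\]
The nodes are pairwise distinct and no two are antipodal, since $\kappa(1)=-9/2$ and $\kappa(-1)=-3/2$ lie outside $[-1,1]$ in \eqref{eq:gale-kernel}. Therefore $x_i$ and $x_j$ are linearly independent in $\R^3$, which forces $a=b=0$. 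If only one node, or none, lies off the conic, the same identity gives the same conclusion even more directly.

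Thus $Q$ vanishes at all eleven nodes. By Lemma~\ref{lem:hilbert-through-two}, the only polynomial of degree at most two vanishing on $X$ is a multiple of $|x|^2-1$. So $Q$ would vanish on the whole sphere, and being homogeneous it would be identically zero, a contradiction. Equivalently, evaluation is injective on $\Pol_2$ by Definition~\ref{def:exactness}. Hence at most eight nodes lie on the conic.

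I expect no serious obstacle. The only points needing care are the two nondegeneracy facts used above. The first is that $Q$ restricts to a genuinely nonzero function on $\Sph2$, which requires excluding $Q\propto|x|^2$. The second is that the two exceptional nodes are linearly independent, which uses the absence of equal and antipodal pairs.
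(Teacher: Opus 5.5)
Your proof is correct, and it is shorter than the paper's. The paper first uses that a quadratic evaluation is orthogonal to the Gale plane, so $\psi_i\bar\zeta_i+\psi_j\bar\zeta_j=0$. This forces $\zeta_j=\pm\zeta_i$, and the case $\zeta_j=\zeta_i$ is excluded because $\kappa(t)=1$ has no real root. Only in the remaining case $\zeta_j=-\zeta_i$, where $\psi_i=\psi_j$, does the paper invoke the odd cubic $\Psi(x)x$ to get $\psi_i(x_i+x_j)=0$.

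You observe that the odd-cubic identity $a\,x_i+b\,x_j=0$ settles everything at once, because two distinct non-antipodal unit vectors are linearly independent. So the Gale-plane step and the computation with $\kappa(t)=1$ are logically redundant here.

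Your route uses only three ingredients: exactness in degree three, injectivity of evaluation on $\Pol_2$, and the absence of antipodal pairs. The last can also be read off from Theorem~\ref{thm:finite-defect}(c), since $2\binom{4}{2}=12>11$. Consequently your argument works verbatim for complex conics and in every dimension: in a $4$-design without antipodal pairs, a nonzero quadratic form is nonzero at at least three nodes. The paper's version keeps the proof inside its Gale-dual framework but gains nothing for this lemma.

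Your preliminary exclusion of $Q\propto|x|^2$ is harmless but unnecessary. Any nonzero homogeneous quadratic already restricts to a nonzero function on $\Sph{2}$.
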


\begin{proof}
Let $\Psi$ be a nonzero real homogeneous quadratic vanishing at at least nine nodes, and put $\psi_i=\Psi(x_i)$. Then the evaluation vector $\psi=(\psi_i)$ is supported on at most two coordinates, and it is nonzero because evaluation is injective on $\Pol_2$. Since a quadratic evaluation lies in $\Harm_0\oplus\Harm_2$, it is orthogonal to the Gale plane, and hence
\[
\sum_i\psi_i\bar\zeta_i=0 .
\]
A one-point support is impossible. For a two-point support, equality of moduli gives $\zeta_j=\pm\zeta_i$.

If $\zeta_j=\zeta_i$, then \eqref{eq:gale-kernel} gives $\kappa(t_{ij})=1$, but the equation $\kappa(t)=1$ has no real root. If $\zeta_j=-\zeta_i$, then $\psi_i=\psi_j$. Exactness for the odd vector-valued cubic $\Psi(x)x$ gives
\[
0=\sum_k\Psi(x_k)x_k=\psi_i(x_i+x_j),
\]
so $x_j=-x_i$, already excluded.
\end{proof}

\section{Two aliases and the harmonic cubic}\label{sec:cubic}

By Theorem~\ref{thm:alias} with $d=3$, there are independent complex linear harmonics $A_1,A_2$ and complex harmonic quadratics $\eta_1,\eta_2$ such that
\begin{equation}\label{eq:alias}
A_j(x_i)=\zeta_i\eta_j(x_i),\qquad j=1,2,
\end{equation}
for every node. Define
\begin{equation}\label{eq:C}
C=A_1\eta_2-A_2\eta_1 .
\end{equation}
Then $C(x_i)=0$ for all $i$.

\begin{lemma}\label{lem:harmonic}
The homogeneous cubic $C$ is harmonic.
\end{lemma}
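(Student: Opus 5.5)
The plan is to compute the Laplacian of $C$ directly, observe that it is a linear form, and then use the vanishing of $C$ on $X$ together with design exactness through degree four to show that this linear form is zero.

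\emph{Step 1: the Laplacian is linear.} Since $A_j$ is linear and $\eta_j$ is a harmonic quadratic, the product rule gives
\[
 \Delta(A_j\eta_k)=A_j\Delta\eta_k+\eta_k\Delta A_j+2\nabla A_j\cdot\nabla\eta_k=2\nabla A_j\cdot\nabla\eta_k .
\]
This is a linear form, because $\nabla A_j$ is a constant vector and $\nabla\eta_k$ is linear. Hence
\[
 L:=\Delta C=2\bigl(\nabla A_1\cdot\nabla\eta_2-\nabla A_2\cdot\nabla\eta_1\bigr)
\]
is a complex linear form.

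\emph{Step 2: harmonic decomposition of $C$.} Write $C=H_3+|x|^2q$ with $H_3\in\Harm_3^\C$ and $q$ linear. Since $\Delta(|x|^2q)=(2d+4\deg q)\,q=10q$ in $d=3$, we get $q=L/10$. On $\Sph{2}$ this gives
\[
 C=H_3+\tfrac1{10}L .
\]

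\emph{Step 3: testing against linear forms.} For any complex linear form $B$, the product $C\bar B$ has degree four. Design exactness through degree four therefore gives
\[
 0=\frac1{11}\sum_iC(x_i)\overline{B(x_i)}=\int_{\Sph{2}}C\bar B\dd\omega=\frac1{10}\int_{\Sph{2}}L\bar B\dd\omega .
\]
Here the first equality holds because $C(x_i)=0$ for every node, by \eqref{eq:alias}. The last equality uses the orthogonality of $\Harm_3$ and $\Harm_1$ in $L^2$.

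\emph{Step 4: conclusion.} Taking $B=L$ gives $\|L\|_{L^2}=0$. A linear form with zero $L^2$ norm on the sphere vanishes identically, so $\Delta C=0$ and $C$ is harmonic.

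The only point needing care is the constant $10$ in Step 2, and in fact any nonzero constant suffices for the argument. Otherwise the proof is routine.
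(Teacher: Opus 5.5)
Your proof is correct and follows essentially the same route as the paper. The paper also decomposes $C$ as a harmonic cubic plus $r^2$ times a linear form, uses $C(x_i)=0$ and degree-four exactness to get $\int_{\Sph{2}}C\bar B\dd\omega=0$ for every linear $B$, and then uses $\Harm_3\perp\Harm_1$ to kill the $r^2$-component. Your Step~1 (computing $\Delta C$ explicitly) is harmless but unnecessary: the decomposition $C=C_3+r^2C_1$ exists for any homogeneous cubic, and testing against $B=C_1$ already forces $C_1=0$.
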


\begin{proof}
Write $C=C_3+r^2C_1$ with $C_3\in\Harm_3^\C$ and $C_1\in\Harm_1^\C$, where $r^2=x^2+y^2+z^2$. For every $M\in\Harm_1^\C$, design exactness gives
\[
0=\frac1{11}\sum_iC(x_i)\overline{M(x_i)}=\int_{\Sph{2}}C\bar M\dd\omega .
\]
The pairing vanishes on $\Harm_3^\C\times\Harm_1^\C$ and is nondegenerate on $r^2\Harm_1^\C\times\Harm_1^\C$. Hence $C_1=0$.
\end{proof}

\begin{lemma}[Reality dichotomy]\label{lem:reality}
Either $C=0$, or $C=e^{i\phi}R$ for a nonzero real harmonic cubic $R$.
\end{lemma}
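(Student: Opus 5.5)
We must show that a nonzero $C$ is, up to a unimodular scalar, real.  Write $C=R_1+iR_2$ with $R_1,R_2$ real harmonic cubics; by Lemma~\ref{lem:harmonic} both are harmonic, and both vanish at every node, since the nodes are real points and $C(x_i)=0$ means $R_1(x_i)=R_2(x_i)=0$.  The claim is equivalent to $R_1,R_2$ being linearly dependent over $\R$.  If they are, then $C=(a+ib)R$ for a real $R$ and real $a,b$, and writing $a+ib=|a+ib|e^{i\phi}$ and absorbing the modulus into $R$ gives the statement.  So the plan is to rule out two $\R$-linearly independent real harmonic cubics vanishing on all eleven nodes.

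Suppose $R_1,R_2$ are independent.  Harmonic cubics restrict injectively to the sphere, so the pencil $\{aR_1+bR_2\}$ consists of nonzero harmonic cubics.  The next step is a dimension count on $\mathcal E_3(X)=\C^X$ from Corollary~\ref{lem:degree-three-saturation}.  Cubics on $\Sph{2}$ form $\Pol_3$, of dimension $16$.  The homogeneous cubics form a $10$-dimensional space, which restricts injectively to the sphere (odd functions).  Surjectivity onto $\C^{11}$ of all of $\Pol_3$ says nothing directly about homogeneous cubics, so a more concrete route is needed.

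The route I would try uses the pencil: pick $(a,b)$ so that $aR_1+bR_2$ vanishes at an additional generic point $p\in\Sph{2}$.  More effectively, one can use the real and imaginary parts together with B\'ezout.  Two independent real cubic curves in $\PP^2$ vanishing at eleven distinct points must share a common component, since $11>9$.  The common component has degree $1$ or $2$, i.e.\ it is a line or a conic.  Writing $R_1=FG_1$ and $R_2=FG_2$ with $F$ the common factor, the nodes not on $\{F=0\}$ lie on $\{G_1=G_2=0\}$.

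If $\deg F=1$, the residual conics $G_1,G_2$ meet in at most $4$ points unless they share a further component.  Then the line carries at least $7$ nodes, contradicting Lemma~\ref{lem:line}.  If they do share a component, $F$ effectively grows to degree $2$, which is the next case.  If $\deg F=2$, the residual lines meet in one point, so the conic carries at least $10$ nodes, contradicting Lemma~\ref{lem:conic}.  In the degenerate case $G_1\propto G_2$, the cubics $R_1,R_2$ are proportional, contrary to assumption.  Here I must be careful that common-component factors of real polynomials can be taken real; they can, by taking the gcd over $\R$.

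The main obstacle is making the B\'ezout step rigorous with eleven distinct projective points.  This requires the no-antipodal fact established above, so that distinct nodes give distinct points of $\PP^2$.  With that in hand, the line and conic bounds close every case, and the dichotomy follows.
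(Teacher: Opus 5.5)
Your argument is correct and is essentially the paper's proof: split $C$ into real and imaginary harmonic cubics, use B\'ezout on the eleven distinct projective nodes to force a common factor, take that factor real, and dispose of the degree-one and degree-two cases with Lemmas~\ref{lem:line} and~\ref{lem:conic}. You can delete the abandoned detours through $\mathcal E_3(X)$ and through a generic extra point. The reality of the common factor is cleanest via the paper's observation that conjugation sends a greatest common divisor to a unimodular multiple of itself.
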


\begin{proof}
Write $C=P+iQ$ with real harmonic cubics $P,Q$. Suppose $P,Q$ are independent. Since they have eleven common projective zeros, B\'ezout forces a nonconstant common divisor. Let $J$ be a greatest common divisor of $P$ and $Q$ in $\C[x,y,z]$. Complex conjugation of coefficients maps a greatest common divisor to a greatest common divisor, so $\bar J=cJ$ with $|c|=1$, and a unimodular rescaling makes $J$ real; we take $J$ real.

If $\deg J=1$, the residual quadratics $P/J$, $Q/J$ are coprime and meet in at most four points, so at least seven nodes lie on the common line, contradicting Lemma~\ref{lem:line}. If $\deg J=2$, the residual lines meet in at most one point, so at least ten nodes lie on the common conic, contradicting Lemma~\ref{lem:conic}. If $\deg J=3$, the cubics are proportional. Thus $P,Q$ are proportional in all possible cases, and the conclusion follows.
\end{proof}

\begin{lemma}\label{lem:Cnonzero}
The case $C=0$ is impossible.
\end{lemma}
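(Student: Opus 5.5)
The plan is to show that $C=0$ forces the phase vector $\bar\zeta$ to agree with a linear function at all but one node, and then to contradict the circle-frame identities \eqref{eq:circleframe}.

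First I pass from the vanishing of $C$ to a factorisation of the aliases. Suppose $C=A_1\eta_2-A_2\eta_1=0$ as a polynomial. The linear forms $A_1,A_2$ are independent, hence coprime in $\C[x,y,z]$. From $A_1\eta_2=A_2\eta_1$ we get $A_1\mid\eta_1$, so $\eta_1=A_1L$ for a linear form $L$. Substituting back gives $\eta_2=A_2L$ with the same $L$.

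If $L=0$, then \eqref{eq:alias} gives $A_1(x_i)=0$ at all eleven nodes. This contradicts injectivity of evaluation on $\Pol_2$ (Definition~\ref{def:exactness}), so $L\ne0$. The alias equations now read
\[
A_j(x_i)\bigl(1-\zeta_iL(x_i)\bigr)=0,\qquad j=1,2,\ i=1,\dots,11 .
\]
Hence at each node either $A_1(x_i)=A_2(x_i)=0$, or $\zeta_iL(x_i)=1$.

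Next I control the common zeros of the two alias forms. The common zero set of $A_1,A_2$ is a single point of $\PP^2_\C$. Since no two nodes are equal or antipodal, the nodes give distinct points of $\PP^2$, so at most one node $x_k$ lies there. At every other node $\zeta_iL(x_i)=1$. Because $|\zeta_i|=1$, this means $\bar\zeta_i=L(x_i)$.

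Consequently the vector $\bar\zeta-L|_X\in\C^X$ is supported on at most one coordinate, so $\bar\zeta-L|_X=a\,e_k$ for some $a\in\C$. Now $L|_X\in\mathcal E_1(X)\subset\mathcal E_2(X)$, while $\bar\zeta\in W_\C=\mathcal E_2(X)^\perp$.

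If $a=0$, then $\bar\zeta=L|_X$ lies in both $\mathcal E_2(X)$ and its orthogonal complement. So $\bar\zeta=0$, which is absurd.

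If $a\ne0$, I apply the projector $Q$ onto $W$ and obtain $\bar\zeta=a\,Qe_k$. By Theorem~\ref{thm:finite-defect} and \eqref{eq:gale-kernel}, the column $Qe_k$ is real, with entries $\tfrac{2}{11}\Re(\zeta_i\bar\zeta_k)$. The conditions $|\zeta_i|=1$ then force $|\Re(\zeta_i\bar\zeta_k)|$ to be independent of $i$. Its value at $i=k$ is $1$, so $\zeta_i=\pm\zeta_k$ for every $i$. This gives $\sum_i\zeta_i^2=11\zeta_k^2\ne0$, contradicting \eqref{eq:zetasums}.

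The step that needs the most care is the second one: making sure that at most one node can be a common zero of $A_1,A_2$, and handling that exceptional node. Routing it through the rank-two projector $Q$, as above, avoids needing any extra harmonicity information about $L$. Harmonicity of $\eta_j=A_jL$ is never used, so nothing else has to be checked.
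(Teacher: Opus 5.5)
Your proof is correct. Up to the factorisation $\eta_j=A_jL$ and the observation that $\bar\zeta_i=L(x_i)$ at all but at most one node, it coincides with the paper's; the endgames differ.

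The paper passes to the real quadratic form $M\bar M-r^2$, which vanishes at ten or more nodes. It makes this form vanish identically by the conic bound (Lemma~\ref{lem:conic}), and then gets a contradiction by comparing real ranks ($\le2$ versus $3$). The same rank argument is reused at the end of Proposition~\ref{prop:generic}.

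You stay inside $\C^X$ instead. You write $\bar\zeta-L|_X=a\,e_k$ and apply the projector $Q$ onto $W_\C$, which gives $\bar\zeta=a\,Qe_k$, a complex multiple of a real vector. Hence all phases equal $\pm$ a single value, contradicting $\sum_i\zeta_i^2=0$ (or $\sum_i\zeta_i=0$, since $11$ is odd).

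Your route is more elementary and closer in spirit to the proof of Proposition~\ref{prop:cb-hilbert}:
\begin{itemize}
\item it avoids the conic bound and the geometry of real quadratic forms;
\item it needs only that $Q$ is a real matrix, not the explicit entries $\tfrac{2}{11}\Re(\zeta_i\bar\zeta_k)$.
\end{itemize}

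The paper's route is more robust. The conic bound tolerates two exceptional nodes. With two exceptional nodes $k,l$, your argument would only place $\bar\zeta$ in $\operatorname{span}_\C\{Qe_k,Qe_l\}$, which is all of $W_\C$ whenever $\zeta_l\ne\pm\zeta_k$, and would yield nothing. Here at most one node is exceptional, so this does not matter.

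A minor point: your separate treatment of $L=0$ is unnecessary. That case would make all eleven nodes common zeros of $A_1,A_2$, which your one-exceptional-node count already excludes.
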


\begin{proof}
If $C=0$, then $A_1\eta_2=A_2\eta_1$. The independent linear forms $A_1,A_2$ are coprime, so there is a complex linear form $M$ with
\[
\eta_1=A_1M,\qquad \eta_2=A_2M .
\]
At a node where $(A_1,A_2)\ne(0,0)$, \eqref{eq:alias} gives $M(x_i)=\bar\zeta_i$. The two independent projective lines have at most one common projective point, so this holds at at least ten nodes. Therefore the real quadratic form
\[
M\bar M-r^2
\]
vanishes at at least ten nodes. By Lemma~\ref{lem:conic}, it must vanish identically. Writing $M=a\cdot x+i\,b\cdot x$ with $a,b\in\R^3$, however,
\[
M\bar M=(a\cdot x)^2+(b\cdot x)^2
\]
has real quadratic rank at most two, while $r^2=x^2+y^2+z^2$ has rank three.
\end{proof}

After multiplying one alias pair by a constant phase, we henceforth assume
\begin{equation}\label{eq:CR}
C=R\in\Harm_3(\R),\qquad R\ne0 .
\end{equation}

\section{The Hermitian quartic matrix}\label{sec:hermitian}

Set
\[
A=\begin{pmatrix}A_1\\A_2\end{pmatrix},\qquad H=\begin{pmatrix}\eta_1\\\eta_2\end{pmatrix},
\]
and define the Hermitian quartic matrix
\begin{equation}\label{eq:F}
\FF=HH^*-r^2AA^* ,
\end{equation}
where $^*$ denotes conjugate transpose with conjugation acting on coefficients; for real $x$ the value $\FF(x)$ is a Hermitian $2\times2$ matrix. Every entry vanishes at every node, because \eqref{eq:alias} gives $A=\zeta H$ there. Moreover, for any two vectors $u,v\in\C^2$,
\[
\det(uu^*-vv^*)=-|\det(u,v)|^2 .
\]
Applying this with $u=H$ and $v=rA$ yields
\begin{equation}\label{eq:detF}
\det\FF=-r^2|\det(A,H)|^2=-r^2R^2 .
\end{equation}

We divide the proof into a generic branch and an exceptional branch. Both branches use changes of alias basis $A\mapsto SA$, $H\mapsto SH$ with $S\in SL_2(\C)$; these preserve the alias relation $A=\zeta H$ at the nodes, transform $\FF$ by the congruence $\FF\mapsto S\FF S^*$, and preserve $R=\det(A,H)$ because $\det S=1$.

\section{The generic branch: matrix Cayley--Bacharach}\label{sec:generic}

For $v\in\C^2$, put
\[
f_v=v^*\FF v .
\]
This is a real homogeneous quartic vanishing on the eleven nodes.

\begin{lemma}[Common-factor dichotomy]\label{lem:dichotomy}
For every nonzero $v$, either $\gcd(R,f_v)=1$ or $R\mid f_v$.
\end{lemma}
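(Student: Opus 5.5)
The dichotomy will follow from the factor structure of $R$ together with the line and conic bounds, in the style of Lemma~\ref{lem:reality}. Let $J=\gcd(R,f_v)$ in $\C[x,y,z]$. Since $R$ and $f_v$ are both real, complex conjugation of coefficients sends $J$ to an associate. So, exactly as in the proof of Lemma~\ref{lem:reality}, I may take $J$ real, and $\deg J\in\{0,1,2,3\}$. The cases $\deg J=0$ and $\deg J=3$ are the two alternatives of the lemma. It therefore suffices to exclude $\deg J=1$ and $\deg J=2$.

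\emph{Case $\deg J=1$.} Write $R=JR'$ and $f_v=Jf'$, where $R'$ is a quadratic and $f'$ a cubic, and $\gcd(R',f')=1$. By B\'ezout the curves $R'=0$ and $f'=0$ meet in at most $6$ projective points. Every node is a common zero of $R$ and $f_v$, so every node lies either on the real line $J=0$ or in $\{R'=f'=0\}$. Hence at least $11-6=5$ nodes lie on the line $J=0$, which contradicts Lemma~\ref{lem:line} (at most four per line). The count must be made over projective points of $\PP^2$. Nodes are distinct there because no two nodes are equal or antipodal, so the count is legitimate.

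\emph{Case $\deg J=2$.} Write $R=JL$ with $L$ linear and $f_v=Jg$ with $g$ quadratic, where $\gcd(L,g)=1$. Then $\{L=g=0\}$ has at most $2$ points, so at least $9$ nodes lie on the real conic $J=0$. This contradicts Lemma~\ref{lem:conic} (at most eight). A nonzero real $J$ is a nonzero real conic, so the lemma applies; degenerate conics such as line pairs are included in Lemma~\ref{lem:conic}, since it allows any nonzero quadratic form.

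The one delicate point is the case where $f_v$ vanishes identically. In that case $R\mid f_v$ trivially, which is the second alternative, so it causes no trouble. The main thing to verify is the reality reduction for $J$. Up to a scalar factor, $\overline{J}$ is again a gcd of $\overline R=R$ and $\overline{f_v}=f_v$; the latter holds because $\FF$ is Hermitian and so $f_v(x)\in\R$ for real $x$. This makes $\bar J=cJ$, and a unimodular rescaling gives a real $J$, exactly as in Lemma~\ref{lem:reality}.
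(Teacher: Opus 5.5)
Your proposal is correct and follows the paper's proof essentially verbatim. You take the greatest common divisor real as in Lemma~\ref{lem:reality}. In degree one, B\'ezout on the residual quadratic and cubic puts at least five nodes on a line, contradicting Lemma~\ref{lem:line}; in degree two, the residual line and conic meet in at most two points, so at least nine nodes lie on a conic, contradicting Lemma~\ref{lem:conic}.
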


\begin{proof}
If $R$ and $f_v$ have a nonconstant common factor but $R\nmid f_v$, their greatest common divisor, which may be taken real as in the proof of Lemma~\ref{lem:reality}, has degree one or two. In degree one, the residual curves have degrees two and three and meet in at most six nodes, so at least five nodes lie on the common line, contradicting Lemma~\ref{lem:line}. In degree two, the residual curves have degrees one and two and meet in at most two points, so at least nine nodes lie on the common conic, contradicting Lemma~\ref{lem:conic}.
\end{proof}

\begin{proposition}[The generic branch is impossible]\label{prop:generic}
It is impossible that some scalar compression $f_v$ fails to be divisible by $R$.
\end{proposition}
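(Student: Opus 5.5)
Assume for contradiction that some $v\ne0$ has $R\nmid f_v$.  By Lemma~\ref{lem:dichotomy}, $\gcd(R,f_v)=1$, so the cubic $R=0$ and the quartic $f_v=0$ meet in a zero-dimensional scheme $Z$ of length $12$ in $\PP^2_\C$.  Both vanish at the eleven nodes, so $Z$ contains the eleven distinct points of $\widehat X$ (projected to $\PP^2$), plus a residual point $p$ of multiplicity one, or an extra multiplicity at one node.  Since $R$ and $f_v$ are real, the residual point is real.  The plan is to apply Cayley--Bacharach for the $(3,4)$ complete intersection: forms of degree $3+4-3=4$ through eleven points of $Z$ pass through the twelfth.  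In the non-reduced case (residual length at a node) we use the scheme-theoretic version, again from \cite{EGH96}: complete intersections are Gorenstein, so any quartic containing a colength-one subscheme contains all of $Z$.

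The matrix step is to apply this to \emph{every} entry of $\FF$, not just $f_v$.  Each entry of $\FF$, and each compression $w^*\FF w$, is a quartic vanishing on all eleven nodes, hence on the colength-one subscheme $Z\setminus\{p\}$, hence on $p$ (resp.\ with the extra tangency).  So $\FF(p)=0$ as a $2\times2$ matrix.  Then $H(p)H(p)^*=r(p)^2A(p)A(p)^*$.  If $p$ is real, $r(p)^2=|p|^2>0$, so $H(p)=\theta\, r(p)A(p)$ with $|\theta|=1$: either $A(p)=H(p)=0$, or $p$ behaves like an extra ``node'' with phase $\bar\theta$.  Now $f_v$ and $R$ meet transversally at $p$ (multiplicity one in the reduced case).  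But $\FF(p)=0$ makes $p$ a singular-type point of every compression $f_w$: the derivative $d(v^*\FF v)=v^*(d\FF)v$ at $p$, where $d\FF=dH\,H^*+H\,dH^*-\dots$, and the vanishing of $A(p),H(p)$ (or their proportionality) constrains it to rank one.  I would try to show this forces $f_v$ to be tangent to $R$ at $p$, contradicting multiplicity one.  Failing that, the fallback is to treat $p$ as a twelfth node and derive a contradiction from positivity: $\det\FF=-r^2R^2$ and a rank-two factorisation $\FF=HH^*-r^2AA^*$ restricted to $Z$.

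The case $A(p)=H(p)=0$ is the most concrete.  Two independent complex linear forms $A_1,A_2$ vanish simultaneously at a single point, so $p$ is the unique common zero and $p$ is real.  Then $H(p)=0$ means both harmonic quadratics vanish at $p$.  I would push this toward an impossible factorisation: $\FF$ then vanishes to order two at $p$, so every $f_w$ is singular at $p$.  In particular $f_v$ meets $R$ with multiplicity $\ge2$ at $p$, contradicting the residual length one.  In the non-reduced case (residual at a node $x_i$), the same argument shows that $\FF$ vanishes to first order along $R$ at $x_i$.  That should contradict the relation $A=\zeta H$ together with the nonvanishing of $\zeta_i$, using $\eqref{eq:detF}$ to compare orders of vanishing.

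I expect the main obstacle to be the real-$p$ branch with $A(p)\ne0$.  There $\FF(p)=0$ is consistent with a rank-one proportionality, and one must show that this forces an impossible ``rank-two factorisation''.  Concretely, $\FF$ restricted to the curve $R=0$ would be a Hermitian matrix of determinant zero (by \eqref{eq:detF}) vanishing on twelve points.  The aim is to show that it factors as $gg^*$ up to sign with $g$ a vector of quadratics, giving a conic through the twelve points and contradicting Lemma~\ref{lem:conic}.
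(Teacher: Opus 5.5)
Your Cayley--Bacharach step is sound. Using the residual (Gorenstein) form is in fact a cleaner treatment of the non-reduced case than the paper's tangent computation. The eleven reduced nodes are residual in $Z$ to a length-one subscheme, so every quartic through them vanishes on the scheme $Z$, and no genericity condition on $v$ is needed.

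The gap is everything after that. Having $\FF|_Z=0$, you look for a \emph{local} contradiction at the residual point $p$, namely tangency of $f_v$ and $R$. When $A(p)\ne0$ this cannot work. Normalise $|p|=1$. The data $\FF(p)=0$, $H(p)=\vartheta A(p)$ with $|\vartheta|=1$, and $p$ real are exactly the data at a node, and they do not make $df_p$ and $dR_p$ dependent. Put $V=dH_p(w)-\vartheta\,dA_p(w)$ and take $w\perp p$. Then $dR_p(w)=\det(A(p),V)$, but $df_p(w)=2\Re\bigl(\bar\vartheta\,(v^*V)\,\overline{v^*A(p)}\bigr)$. These are two real functionals of $w$ that are not proportional in general.

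For the same reason, in the non-reduced case the vanishing of $\FF$ on the length-two scheme at a node is not a contradiction; the paper proves precisely this vanishing as an intermediate step. Your fallback is not carried out: you propose a factorisation $\pm gg^*$ on $R=0$ producing a conic through twelve points, but no such conic is actually constructed. Lemma~\ref{lem:conic} is also not the obstruction that arises.

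The missing idea is global and algebraic.
\begin{itemize}
\item After an $SU_2$ change of alias basis, make $f=f_v$ the $(1,1)$ entry of $\FF$ and let $g$ be its off-diagonal entry. Then \eqref{eq:detF} becomes $g\bar g=fe+r^2R^2$.
\item Your Cayley--Bacharach conclusion says that $g$ vanishes scheme-theoretically on $Z$. Since $(R,f)$ is a complete intersection, it is saturated, so $g\in(R,f)_4$. That is, $g=af+\mu R$ with $a\in\C$ and $\mu$ a complex linear form.
\item Reducing the determinant identity modulo $f$ gives $f\mid(\mu\bar\mu-r^2)R^2$. Because $\gcd(f,R)=1$, this gives $f\mid\mu\bar\mu-r^2$, and comparing degrees forces $\mu\bar\mu=r^2$.
\item This is impossible: $\mu\bar\mu$ is a sum of two squares of real linear forms, so it has real rank at most two, while $r^2$ has rank three.
\end{itemize}
All the ingredients are already in your proposal. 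It is this ideal-membership step combined with the determinant identity, not any local analysis at $p$, that turns $\FF|_Z=0$ into a contradiction.
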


\begin{proof}
Assume that some $f_v$ is not divisible by $R$. The map $v\mapsto f_v$ is a Hermitian form in $v$ with values in the space of real quartics, and divisibility by $R$ is a linear condition on quartics; hence the set of $v$ for which $R\mid f_v$ is a proper real-algebraic subset of $\C^2\cong\R^4$. For each node with $A(x_i)\ne0$, the condition $v^*A(x_i)=0$ defines a proper real-algebraic subset as well. We may therefore choose $v$ so that
\[
\gcd(R,f_v)=1
\]
and, for each node with $A(x_i)\ne0$,
\[
v^*A(x_i)\ne0 .
\]
After an $SU_2$ change of alias basis taking $v/|v|$ to the first basis vector, write
\begin{equation}\label{eq:Fblock}
\FF=\begin{pmatrix}f&g\\\bar g&e\end{pmatrix},\qquad \gcd(f,R)=1,
\end{equation}
with $f,e$ real quartics and $g$ a complex quartic. Equation \eqref{eq:detF} becomes
\begin{equation}\label{eq:GG}
g\bar g=fe+r^2R^2 .
\end{equation}
Let
\[
Z=V(R,f)\subset\PP^2_\C .
\]
Because $\gcd(R,f)=1$, the pair $R,f$ is a homogeneous regular sequence in $\C[x,y,z]$.  Hence $\C[x,y,z]/(R,f)$ is a one-dimensional Cohen--Macaulay graded ring; it has no irrelevant-ideal torsion, so $(R,f)$ is saturated.  B\'ezout's theorem gives $\operatorname{length}Z=3\cdot4=12$.  The scheme contains the eleven distinct nodes. Since their reduced union has length eleven, exactly one unit of residual length remains: either a twelfth point, or one node carrying local length two.

If the residual length is supported at a new point, all twelve points are simple. Cayley--Bacharach for a $(3,4)$ complete intersection says that a quartic passing through eleven of them passes through the twelfth \cite{EGH96}; applying this to the real and imaginary parts of $g$, the quartic $g$ vanishes on all of $Z$.

Suppose instead that one node $p$ has local length two and all other nodes are simple.  Let $\mathcal O$ be the local ring of the smooth surface $\PP^2_\C$ at $p$, with maximal ideal $\mathfrak m$.  A local quotient of $\mathcal O$ of length two has one-dimensional maximal ideal with square zero; equivalently, its defining ideal contains $\mathfrak m^2$ and a codimension-one subspace of $\mathfrak m/\mathfrak m^2$.  Thus the local scheme is curvilinear and determines a unique tangent direction.  Since the local component of the real scheme $Z$ at the real point $p$ is conjugation-invariant, this tangent direction may be represented by a real vector $w$. The tangent direction is nonzero in the projective tangent space, so $w\notin\C p$. A polynomial vanishes on the local scheme exactly when its value at $p$ is zero and its differential kills $w$. Since $R,f\in I(Z)$,
\begin{equation}\label{eq:tangent}
dR_p(w)=df_p(w)=0 .
\end{equation}
Because $R$ and $f$ are homogeneous, $dR_p(p)=3R(p)=0$ and $df_p(p)=4f(p)=0$, so we may replace $w$ by its component orthogonal to $p$ and assume $w\perp p$; then $d(r^2)_p(w)=2p\cdot w=0$.

At $p$, write $H(p)=\vartheta A(p)$ with $|\vartheta|=1$ (indeed $\vartheta=\bar\zeta_p$ by \eqref{eq:alias}). If $A(p)=0$, then also $H(p)=0$, and every first derivative of $\FF$ vanishes at $p$. Assume $A(p)\ne0$ and set
\[
V=dH_p(w)-\vartheta\,dA_p(w)\in\C^2 .
\]
Differentiating $R=\det(A,H)$ and using \eqref{eq:tangent} gives $\det(A(p),V)=0$, so $V=cA(p)$ for some $c\in\C$. Differentiation of \eqref{eq:F}, with $d(r^2)_p(w)=0$, gives
\begin{equation}\label{eq:dF}
d\FF_p(w)=2\Re(\bar\vartheta c)\,A(p)A(p)^* .
\end{equation}
The first diagonal compression is $f=f_v$, and our choice gives $v^*A(p)\ne0$, i.e.\ $A_1(p)\ne0$ after the basis change. Thus $df_p(w)=0$ forces $\Re(\bar\vartheta c)=0$, so $d\FF_p(w)=0$. In particular $dg_p(w)=0$. Hence $g$ vanishes on the full length-two local scheme as well as at the other ten nodes.

In either residual case, $g$ vanishes scheme-theoretically on $Z$, and therefore
\[
g\in I(Z)=(R,f).
\]
Taking the degree-four piece,
\begin{equation}\label{eq:Gdecomp}
g=af+\mu R
\end{equation}
for a constant $a\in\C$ and a complex linear form $\mu$. Reducing \eqref{eq:GG} modulo $f$ yields
\[
f\mid(\mu\bar\mu-r^2)R^2 .
\]
Since $\gcd(f,R)=1$,
\[
f\mid\mu\bar\mu-r^2 .
\]
The right side has degree two, so it must vanish identically. This is impossible by the same rank-two versus rank-three argument used in Lemma~\ref{lem:Cnonzero}. Thus the generic branch cannot occur.
\end{proof}

\section{The exceptional branch: Pauli factorisation}\label{sec:pauli}

\begin{proposition}[The exceptional branch is impossible]\label{prop:exceptional}
It is impossible that every scalar compression $f_v$, $v\in\C^2$, is divisible by $R$.
\end{proposition}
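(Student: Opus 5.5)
Assume every $f_v$ is divisible by $R$. Polarising the Hermitian form $v\mapsto f_v$ shows that every entry of $\FF$ is divisible by $R$. So $\FF=R\LL$, where $\LL$ is a Hermitian $2\times2$ matrix of real and complex linear forms. Equation \eqref{eq:detF} then gives $R^2\det\LL=-r^2R^2$, that is, $\det\LL=-r^2$.

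The first step is a Pauli normal form. Write $\LL=\ell_0I+\sum_k\ell_k\sigma_k$ with real linear forms $\ell_0,\ldots,\ell_3$. Then
\[
\det\LL=\ell_0^2-\ell_1^2-\ell_2^2-\ell_3^2=-(x^2+y^2+z^2).
\]
So $(\ell_0,\ldots,\ell_3)$ is an isometric embedding of $(\R^3,r^2)$ into Minkowski space $\R^{1,3}$. A Lorentz transformation, realised by an $SL_2(\C)$ congruence $S\LL S^*$ (which is exactly our allowed alias change of basis), normalises this to $\LL=x\sigma_1+y\sigma_2+z\sigma_3$, after a real rotation of coordinates if needed. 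Here I must check that the spacelike 3-plane can be moved to the standard one. This holds because $SO^+(1,3)$ acts transitively on spacelike 3-planes, and $\ell_0$ can be killed.

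Next comes the harmonic-spinor classification. The relation $HH^*-r^2AA^*=R\,(x\cdot\sigma)$ is now an identity of polynomial Hermitian matrices. Let $x\cdot\sigma$ act at a unit vector $x$; it has eigenvalues $\pm1$ with spinor eigenvectors $s_\pm(x)$. The left side is rank $\le2$ with signature determined by $H$ and $rA$. Comparing, $H$ and $A$ must be proportional to suitable spinor-valued polynomials intertwined by $x\cdot\sigma$. I would try to show $H=(x\cdot\sigma)\,\alpha\,A$ up to a constant scalar or linear correction, say $H=M(x\cdot\sigma)A$ plus a term in the kernel. Then I would use harmonicity of $A$ (linear) and $\eta_j$ (quadratic) to pin down $A$ as a constant-coefficient linear spinor field. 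This is the spherical Dirac-type condition, and I expect this classification to be the main obstacle: one must enumerate the solutions of the matrix identity with harmonic entries, presumably obtaining $H=c\,(x\cdot\sigma)A$ composed with a fixed linear map, with a specific normalisation of $|c|$.

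Finally, feed the normal form into the second-moment identities. Theorem~\ref{thm:corank-two-identities}(a) gives $\sum_j\|\eta_j\|^2_{L^2}$ in terms of the aliases, and exactness gives $\frac1{11}\sum_i|A(x_i)|^2=\int|A|^2\dd\omega$. On the nodes $|H|=|A|$ holds because $A=\zeta H$. Integrating $\tr\FF=|H|^2-r^2|A|^2=R\,\tr\LL=0$ against the design measure, and comparing with the $L^2$ norms computed from the normal form, produces two evaluations of one quantity. I expect these to be $11/3$, from the degree-two moment $\frac1{11}\sum|A|^2=\frac13|A|^2_{\rm coeff}$ times $11$, and $11/2$, from the norm of $(x\cdot\sigma)A$ as a harmonic quadratic. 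The resulting inequality $11/3\ge11/2$ is false, which is the promised contradiction.
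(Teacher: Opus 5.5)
Your first two steps agree with the paper: polarising to get $\FF=R\LL$ with $\det\LL=-r^2$, and then using a Lorentz/$SL_2(\C)$ congruence to reach $\LL=x\sigma_1+y\sigma_2+z\sigma_3$. After that the proposal has two genuine gaps.

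\textbf{The relation between $H$ and $A$ is never derived, and your ansatz has the wrong shape.} No eigenvector comparison is needed. Multiply $\FF=R\LL$ on the right by $A^\perp=(-\bar A_2,\bar A_1)^T$. Since $A^*A^\perp=0$ and $H^*A^\perp=\overline{\det(A,H)}=R$, this gives $RH=R\LL A^\perp$, hence $H=\LL A^\perp$.

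This relation is antilinear in $A$, because conjugation acts on coefficients. So $H=M\LL A$ with a constant $M$ is not what holds in general. It becomes linear only once $A$ has been normalised to real coefficients, and that normalisation itself relies on $H=\LL A^\perp$.

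The classification you call the main obstacle then runs as follows. Write $A=Bx$. Harmonicity of $\eta_1,\eta_2$ forces $B$ into the form \eqref{eq:Bform}. Harmonicity of $R=-A^*\LL A$ forces $BB^*=\lambda I_2$, so $B^*B$ is real of rank two. A rotation, a phase change and the spin-$3/2$ rotation then reduce to $A=(x,z)^T$ and $R=z(z^2-3x^2)$. None of this is supplied in your proposal.

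\textbf{Your final mechanism cannot produce a contradiction.} After Pauli normalisation $\tr\LL=0$, so $\tr\FF=0$ is the polynomial identity $|H|^2=r^2|A|^2$. At the nodes it says nothing beyond $|\zeta_i|=1$. Integrating it against the design and comparing $L^2$ norms only reproduces identities that hold for every corank-two design, such as Theorem~\ref{thm:corank-two-identities}(a). In the canonical form, $\frac1{11}\sum_i|A(x_i)|^2$ and $\|H\|_{L^2}^2$ both equal $\int(x^2+z^2)\dd\omega=\tfrac23$, so nothing is contradicted. The numbers $11/3$ and $11/2$ do not arise this way.

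The missing idea is pointwise:
\begin{itemize}
\item Every node lies on $R=0$, that is, on one of the planes $z=0$ or $z=\pm\sqrt3\,x$.
\item On these planes $A=\zeta H$ fixes the phase explicitly: $\zeta=x+iy$ on $z=0$, and $\zeta=-2x+iy$ on $z=\pm\sqrt3\,x$.
\item Hence $\Im\zeta_i=y_i$ whenever $A(x_i)\ne0$. If $A(x_i)=0$, then $(\Im\zeta_i)^2\le1=y_i^2$.
\end{itemize}
The design gives $\sum_iy_i^2=11/3$. On the other hand, $|\zeta_i|=1$ together with the Gale-circle identity $\sum_i\zeta_i^2=0$ gives $\sum_i(\Im\zeta_i)^2=11/2$. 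The contradiction rests on that identity $\sum_i\zeta_i^2=0$, which your proposal never uses.
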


The proof of Proposition~\ref{prop:exceptional} occupies the rest of this section and Sections~\ref{sec:spinor}--\ref{sec:moment}.  Assume that every $f_v$ is divisible by $R$. Polarisation (take $v=e_1,e_2,e_1+e_2,e_1+ie_2$) then gives entrywise divisibility:
\begin{equation}\label{eq:FRL}
\FF=R\LL ,
\end{equation}
where $\LL$ is a $2\times2$ matrix of linear forms which is Hermitian at real arguments. From \eqref{eq:detF},
\begin{equation}\label{eq:detL}
\det\LL=-r^2 .
\end{equation}

\begin{lemma}[Pauli normalisation]\label{lem:pauli}
After an $SL_2(\C)$ congruence in alias space and an orthogonal change of physical coordinates,
\begin{equation}\label{eq:pauliL}
\LL(x,y,z)=\begin{pmatrix}z&x-iy\\x+iy&-z\end{pmatrix}.
\end{equation}
\end{lemma}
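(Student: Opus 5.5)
The plan is to use the real structure of $\LL$. At real arguments $\LL(x)$ is Hermitian and linear in $x$, so
\[
 \LL(x)=\ell_0(x)I+\ell_1(x)\sigma_1+\ell_2(x)\sigma_2+\ell_3(x)\sigma_3
\]
with real linear forms $\ell_0,\ldots,\ell_3$ in $x\in\R^3$. Then
\[
 \det\LL=\ell_0^2-\ell_1^2-\ell_2^2-\ell_3^2,
\]
and \eqref{eq:detL} says this quadratic form equals $-r^2=-(x^2+y^2+z^2)$. In other words, the linear map $L:\R^3\to\R^{1,3}$, $x\mapsto(\ell_0,\ell_1,\ell_2,\ell_3)(x)$, pulls the Minkowski form $\eta=\operatorname{diag}(1,-1,-1,-1)$ back to $-I_3$. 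The image $L(\R^3)$ is therefore a three-dimensional spacelike subspace on which $-\eta$ restricts to the Euclidean form pulled back isometrically from $\R^3$.

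The next step is to move this subspace to the standard one. The congruence $\LL\mapsto S\LL S^*$ with $S\in SL_2(\C)$ acts on Hermitian $2\times2$ matrices through the double cover $SL_2(\C)\to SO^+(1,3)$. Since $\det S=1$, this preserves $\det\LL$, and it also preserves $R$ and the alias relations, as noted in Section~\ref{sec:hermitian}. A spacelike three-dimensional subspace has a timelike orthogonal complement, spanned by a unit timelike vector $n$ with $\eta(n,n)=1$. After replacing $n$ by $-n$ if needed, $n$ is future-pointing, and some $\Lambda\in SO^+(1,3)$ takes $n$ to $e_0$. It then maps $L(\R^3)$ onto $e_0^\perp=\operatorname{span}(e_1,e_2,e_3)$. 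We lift $\Lambda$ to $S\in SL_2(\C)$, which reduces to $\ell_0\equiv0$. The remaining map $x\mapsto(\ell_1,\ell_2,\ell_3)$ is then an isometry $\R^3\to\R^3$, that is, an orthogonal matrix $O$.

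The last step is to absorb $O$ by an orthogonal change of physical coordinates. This replaces $\LL(x)$ by $\LL(O^{-1}x)$, which gives $\ell_1=x$, $\ell_2=y$, $\ell_3=z$ and hence $\LL=x\sigma_1+y\sigma_2+z\sigma_3$, which is \eqref{eq:pauliL}. The physical rotation must preserve the standing setup: the design property, harmonicity of $A,H,R$, and the form $r^2$ are all $O(3)$-invariant, so the rotated configuration satisfies every earlier hypothesis.

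I expect two points to need care. The first is the orientation issue: if $\det O=-1$, we may apply the physical reflection $x\mapsto-x$, which is orthogonal. Under it $R\mapsto -R$ and $\LL(x)\mapsto\LL(-x)$, so $\FF=R\LL$ is unchanged, and $\det O$ flips sign. Alternatively, we may allow $O\in O(3)$ directly, since the statement permits any orthogonal change. The second is justifying that the spacelike subspace has a future-pointing timelike normal that $SO^+(1,3)$ can move to $e_0$. This is standard, and I would cite transitivity of $SO^+(1,3)$ on the upper unit hyperboloid.
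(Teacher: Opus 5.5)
Your proposal is correct and follows essentially the same route as the paper. The paper also treats $\LL$ as embedding $\R^3$ as a maximal negative-definite subspace for the signature-$(1,3)$ determinant form on Hermitian matrices, moves the positive line orthogonal to that subspace onto $\R I$ by an $SL_2(\C)$ congruence, and absorbs the resulting orthogonal map into the physical coordinates. The only difference is that the paper writes the congruence explicitly as $S=T^{-1/2}$, where $T$ is the positive definite matrix of determinant one spanning that line, instead of invoking the double cover $SL_2(\C)\to SO^+(1,3)$ and transitivity on the hyperboloid.
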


\begin{proof}
Let $\Herm$ be the four-dimensional real vector space of Hermitian $2\times2$ matrices with determinant form $\delta(M)=\det M$. This form has signature $(1,3)$. Equation \eqref{eq:detL} says that the linear map
\[
\Phi:\R^3\to\Herm,\qquad x\mapsto\LL(x),
\]
identifies $\R^3$ with a maximal negative-definite subspace.

Its $\delta$-orthogonal complement is a positive line. Choose $T$ on that line with $T$ positive definite and $\det T=1$. Put $S=T^{-1/2}$. Then $S\in SL_2(\C)$, congruence by $S$ preserves determinant, and $STS^*=I$. Consequently $S\Phi(\R^3)S^*$ is $\delta$-orthogonal to $I$, hence lies in the traceless Hermitian subspace (the polar form of $\det$ against $I$ is $\tfrac12\tr$). Every traceless Hermitian matrix has a unique expression $u_1\sigma_1+u_2\sigma_2+u_3\sigma_3$, and its determinant is $-|u|^2$. Thus the transformed coefficient map from physical $\R^3$ to $u$-space is orthogonal. Absorbing that orthogonal map into the physical coordinates gives \eqref{eq:pauliL}.
\end{proof}

The transformations in Lemma~\ref{lem:pauli} preserve the alias relation $A=\zeta H$ at the nodes and preserve $R=\det(A,H)$ because the alias-space congruence has determinant one; the orthogonal change of coordinates moves the design to a congruent design.

For $u=(u_1,u_2)^T$, define
\[
u^\perp=(-\bar u_2,\bar u_1)^T .
\]
Then $A^*A^\perp=0$ and
\[
H^*A^\perp=\overline{\det(A,H)}=R .
\]
Multiplying \eqref{eq:FRL} by $A^\perp$ gives $R H=R\LL A^\perp$, hence
\begin{equation}\label{eq:HLA}
H=\LL A^\perp .
\end{equation}

\section{Classification of the harmonic Pauli branch}\label{sec:spinor}

Write $A=Bx$, where $x=(x,y,z)^T$ and $B\in M_{2\times3}(\C)$.

\begin{lemma}[Harmonic spinor parameterisation]\label{lem:spinor}
The two components of $H=\LL A^\perp$ are harmonic quadratics if and only if
\begin{equation}\label{eq:Bform}
B=\begin{pmatrix}p&q&-s+it\\ s&t&p+iq\end{pmatrix}
\end{equation}
for complex parameters $p,q,s,t$.
\end{lemma}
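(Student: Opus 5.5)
The plan is a direct computation: write $H=\LL A^\perp$ explicitly in terms of the entries of $B$, then impose that each component has zero Laplacian, which gives linear conditions on $B$.

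Write the rows of $B$ as $b_1,b_2\in\C^3$, so $A_1=b_1\cdot x$ and $A_2=b_2\cdot x$. Then $A^\perp=(-\bar b_2\cdot x,\ \bar b_1\cdot x)^T$, where the bar conjugates coefficients. Write $\LL=\sum_k x_k\sigma_k$ with the Pauli matrices $\sigma_1,\sigma_2,\sigma_3$ attached to $x,y,z$. Each component of $H$ then has the form $\sum_{k,l}x_kx_l\,c_l^{(j)}(\sigma_k)_{j\cdot}$, a sum of products (linear form)$\times$(linear form). For $u,w\in\C^3$ one has $\Delta\bigl((u\cdot x)(w\cdot x)\bigr)=2\,u\cdot w$ (bilinear, no conjugation). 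Writing $\beta=\bar b_2$ and $\alpha=\bar b_1$, the components are
\[
H_1=-z(\beta\cdot x)+(x-iy)(\alpha\cdot x),\qquad
H_2=-(x+iy)(\beta\cdot x)-z(\alpha\cdot x).
\]
Harmonicity is therefore equivalent to the two linear equations
\[
-\beta_3+\alpha_1-i\alpha_2=0,\qquad -\beta_1-i\beta_2-\alpha_3=0.
\]
These are two independent complex linear conditions on $(\alpha,\beta)\in\C^6$, so the solution space has complex dimension four, matching the four parameters $p,q,s,t$.

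The remaining step is to solve and conjugate back. Set $b_1=(p,q,b_{13})$ and $b_2=(s,t,b_{23})$, so $\alpha=(\bar p,\bar q,\bar b_{13})$ and $\beta=(\bar s,\bar t,\bar b_{23})$. The first equation gives $\bar b_{23}=\bar p-i\bar q$, that is $b_{23}=p+iq$. The second gives $\bar b_{13}=-\bar s-i\bar t$, that is $b_{13}=-s+it$. This is exactly \eqref{eq:Bform}. The converse direction is immediate, because both conditions were equivalences: each component of $H$ is a homogeneous quadratic, and it is harmonic exactly when its Laplacian, the constant computed above, vanishes.

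The only delicate point is bookkeeping. One must conjugate the coefficients of $A$ in forming $A^\perp$, and then conjugate back at the end; a sign or conjugation slip there would produce a mis-signed form of \eqref{eq:Bform}. I would check the final result by plugging in a simple case, such as $p=1$ with $q=s=t=0$: then $A=(x,\,z)^T$, so $A^\perp=(-z,\,x)^T$, and $H=(-z^2+x^2-ixy,\ -xz-iyz-xz)^T$. Both components are visibly harmonic.
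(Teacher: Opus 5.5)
Your proposal is correct and follows the paper's proof: both compute the constant Laplacians of the two components of $H=\LL A^\perp$ directly and solve the two resulting linear conditions. Your expressions $-\beta_3+\alpha_1-i\alpha_2$ and $-\beta_1-i\beta_2-\alpha_3$ are exactly the paper's $\tfrac12\Delta H_1=\bar p-i\bar q-\bar u$ and $\tfrac12\Delta H_2=-(\bar s+i\bar t+\bar\rho)$, and your check at $p=1$ correctly reproduces the later canonical form \eqref{eq:Hcanon}.
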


\begin{proof}
Write
\[
A_1=px+qy+\rho z,\qquad A_2=sx+ty+uz .
\]
Using \eqref{eq:pauliL} and \eqref{eq:HLA}, direct differentiation gives
\[
\Delta H_1=2(\bar p-i\bar q-\bar u),\qquad \Delta H_2=-2(\bar s+i\bar t+\bar\rho).
\]
Thus harmonicity is equivalent to $u=p+iq$ and $\rho=-s+it$.
\end{proof}

Let $\sigma_1,\sigma_2,\sigma_3$ be the Pauli matrices, so $\LL=x\sigma_1+y\sigma_2+z\sigma_3$. Since $H=\LL A^\perp$,
\begin{equation}\label{eq:RALA}
R=\det(A,H)=-A^*\LL A .
\end{equation}
Substitution of \eqref{eq:Bform} gives the exact formula
\begin{equation}\label{eq:laplaceR}
\Delta R=-2\sum_{\alpha=1}^3\tr(BB^*\sigma_\alpha)\,x_\alpha .
\end{equation}
Since $R$ is harmonic and the Pauli matrices span the traceless Hermitian matrices,
\begin{equation}\label{eq:BBstar}
BB^*=\lambda I_2
\end{equation}
for some $\lambda>0$.

\begin{lemma}\label{lem:BstarB}
Under \eqref{eq:Bform}, equation \eqref{eq:BBstar} implies that $B^*B$ is a real symmetric rank-two matrix.
\end{lemma}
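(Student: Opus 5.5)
The plan is to separate the two claims, rank and reality, and to treat reality through the antisymmetric (imaginary) part of the Hermitian matrix $B^*B$.

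\emph{Rank.} This is immediate from \eqref{eq:BBstar}. Since $BB^*=\lambda I_2$ with $\lambda>0$, $B$ has rank two. The nonzero eigenvalues of $B^*B$ coincide with those of $BB^*$, so $B^*B$ is a $3\times3$ positive semidefinite Hermitian matrix of rank exactly two. Indeed $(B^*B)^2=\lambda B^*B$, so $\lambda^{-1}B^*B$ is the orthogonal projector onto the conjugate row space of $B$.

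\emph{Reality.} A Hermitian matrix is real symmetric exactly when its imaginary part vanishes, and that imaginary part is antisymmetric. Write $b_1=(p,q,-s+it)$ and $b_2=(s,t,p+iq)$ for the rows of $B$ in \eqref{eq:Bform}. For any $b\in\C^3$, the antisymmetric part of $\bar b^{T}b$ has entries $\bar b_\alpha b_\beta-\bar b_\beta b_\alpha$, which are the components of the cross product $\bar b\times b$ (a purely imaginary vector). Hence $B^*B$ is real if and only if
\[
 w:=\bar b_1\times b_1+\bar b_2\times b_2=0 .
\]
I will compute the three components of $w$ directly from \eqref{eq:Bform}. For instance, the first component is
\[
 (\bar s q-\bar q s)+(\bar t p-\bar p t)+2i\,(\bar q t+\bar t q).
\]
This is not identically zero, so the hypothesis \eqref{eq:BBstar} must be used. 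Written out, it amounts to two conditions:
\[
 |b_1|^2=|b_2|^2
\]
and
\[
 b_1\cdot\bar b_2=p\bar s+q\bar t+(-s+it)\overline{(p+iq)}=0 .
\]
Taking real and imaginary parts of the second condition gives three real equations in total.

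\emph{Expected obstacle.} The main difficulty is showing that these three real equations force all three components of $w$ to vanish. I would first exploit the spinor structure hidden in \eqref{eq:Bform}. The map $(p,q,s,t)\mapsto B$ intertwines the $SU_2$ action on alias space, which preserves \eqref{eq:BBstar}, with rotations about the $z$-axis, which preserve the form \eqref{eq:Bform}. Using this symmetry I would normalise, say, $s=0$ or $p$ real before expanding, so that the identity $w=0$ reduces to a short check. Should the symmetry argument not close cleanly, the fallback is to show that the components of $w$ lie in the real span of
\[
 |b_1|^2-|b_2|^2,\qquad \Re(b_1\cdot\bar b_2),\qquad \Im(b_1\cdot\bar b_2),
\]
by matching coefficients of the real quadratic monomials in $\Re p,\Im p,\dots,\Im t$. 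This is a finite linear-algebra verification.
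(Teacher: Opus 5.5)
Your fallback is exactly the paper's proof: a direct coefficient comparison gives $\Im(B^*B)_{12}=\tfrac12(|b_1|^2-|b_2|^2)$, $\Im(B^*B)_{13}=\Im(b_1\cdot\bar b_2)$ and $\Im(B^*B)_{23}=\Re(b_1\cdot\bar b_2)$. For instance, your first component of $w$ is exactly $2i\,\Re(b_1\cdot\bar b_2)$, and the rank statement is argued the same way as in the paper. The symmetry normalisation is unnecessary and, as phrased, slightly off: in general neither $B\mapsto UB$ nor a $z$-rotation $B\mapsto BO^T$ preserves \eqref{eq:Bform} on its own; only the paired action $B\mapsto UBO^T$ with $U\LL(O^Tx)U^*=\LL(x)$ does.
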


\begin{proof}
A direct coefficient calculation gives
\begin{gather*}
\Im(B^*B)_{12}=\tfrac12\bigl((BB^*)_{11}-(BB^*)_{22}\bigr),\\
\Im(B^*B)_{13}=\Im(BB^*)_{12},\qquad
\Im(B^*B)_{23}=\Re(BB^*)_{12}.
\end{gather*}
All three vanish under \eqref{eq:BBstar}. Since the two rows of $B$ are independent, $B^*B$ has rank two.
\end{proof}

The one-dimensional kernel of $B^*B$ is therefore spanned by a real vector. The Pauli pencil is equivariant under the double cover $SU_2\to SO_3$: if $O\in SO_3$ and $U\in SU_2$ satisfy
\[
U\LL(O^Tx)U^*=\LL(x),
\]
then
\[
A'(x)=UA(O^Tx),\qquad H'(x)=UH(O^Tx)
\]
still satisfy $H'=\LL(A')^\perp$, and $B'=UBO^T$. We may therefore rotate the real kernel of $B^*B$ to the $y$-axis. In \eqref{eq:Bform} this gives $q=t=0$ and
\begin{equation}\label{eq:B5}
B=\begin{pmatrix}p&0&-s\\ s&0&p\end{pmatrix}.
\end{equation}
Equation \eqref{eq:BBstar} says $p\bar s-s\bar p=0$, so $p$ and $s$ have a common complex phase (trivially so if one of them vanishes). The transformation
\[
A\mapsto e^{-i\phi}A,\qquad H\mapsto e^{i\phi}H,\qquad \zeta\mapsto e^{-2i\phi}\zeta
\]
removes this phase without changing $R$, $\FF$, $\LL$, \eqref{eq:HLA} or \eqref{eq:zetasums}--\eqref{eq:gale-kernel}. Thus $p,s$ may be taken real.

\begin{lemma}[Spin-$3/2$ rotation]\label{lem:spin}
Let $O_\theta\in SO_3$ be the rotation about the $y$-axis by the angle $\theta$, with $O_\theta^T(x,y,z)^T=(x\cos\theta-z\sin\theta,\ y,\ x\sin\theta+z\cos\theta)^T$, and let
\[
U_\theta=\begin{pmatrix}\cos(\theta/2)&-\sin(\theta/2)\\ \sin(\theta/2)&\cos(\theta/2)\end{pmatrix}\in SU_2 .
\]
Then $U_\theta\LL(O_\theta^Tx)U_\theta^*=\LL(x)$, and for real $p,s$ the transformation $A'(x)=U_\theta A(O_\theta^Tx)$ applied to $A=Bx$ with $B$ as in \eqref{eq:B5} gives $A'=B'x$ with $B'$ of the same form and
\[
\begin{pmatrix}p'\\s'\end{pmatrix}=
\begin{pmatrix}\cos(3\theta/2)&-\sin(3\theta/2)\\ \sin(3\theta/2)&\cos(3\theta/2)\end{pmatrix}
\begin{pmatrix}p\\s\end{pmatrix}.
\]
\end{lemma}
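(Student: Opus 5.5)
The plan is a direct verification in two stages: first the equivariance of the Pauli pencil, then the transformation law for $B$. The main obstacle is bookkeeping of sign conventions, namely the direction of $O_\theta$ versus $O_\theta^T$ and the sign of the half-angle. I will fix these once, by writing everything through the rotation matrix
\[
 J_\phi=\begin{pmatrix}\cos\phi&-\sin\phi\\ \sin\phi&\cos\phi\end{pmatrix},
\]
so that $U_\theta=J_{\theta/2}$ acting on $\C^2$, and the $(x,z)$-part of $O_\theta^T$ sends $(x,z)$ to $(x\cos\theta-z\sin\theta,\ x\sin\theta+z\cos\theta)$.

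\emph{Equivariance.} I write $U_\theta=\cos(\theta/2)I-i\sin(\theta/2)\sigma_2$, using $-i\sigma_2=J_{\pi/2}$. Since $U_\theta$ commutes with $\sigma_2$, the $y$-term of $\LL(O_\theta^Tx)$ is unchanged. For $\sigma\in\{\sigma_1,\sigma_3\}$, which anticommute with $\sigma_2$, one has $U_\theta\sigma U_\theta^*=\sigma\,(\cos(\theta/2)+i\sin(\theta/2)\sigma_2)^2=\sigma\,e^{i\theta\sigma_2}$. Using $\sigma_1\sigma_2=i\sigma_3$ and $\sigma_3\sigma_2=-i\sigma_1$, this gives
\[
 U_\theta\sigma_1U_\theta^*=\cos\theta\,\sigma_1-\sin\theta\,\sigma_3,\qquad
 U_\theta\sigma_3U_\theta^*=\cos\theta\,\sigma_3+\sin\theta\,\sigma_1 .
\]
Substituting the coefficients $x\cos\theta-z\sin\theta$ of $\sigma_1$ and $x\sin\theta+z\cos\theta$ of $\sigma_3$ from $\LL(O_\theta^Tx)$ then collapses to $x\sigma_1+z\sigma_3$ by $\cos^2\theta+\sin^2\theta=1$. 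If a sign comes out reversed, I will adjust by checking the single case $\theta=\pi/2$ entrywise.

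\emph{Transformation of $B$.} Since $A(O_\theta^Tx)=BO_\theta^Tx$, I have $B'=U_\theta BO_\theta^T$. With $B$ as in \eqref{eq:B5}, I first compute $BO_\theta^T$ column by column. The $y$-column stays zero. The $x$-column is
\[
 (p\cos\theta-s\sin\theta,\ p\sin\theta+s\cos\theta)^T=J_\theta(p,s)^T=:(p'',s'')^T .
\]
The $z$-column is $(-p\sin\theta-s\cos\theta,\ p\cos\theta-s\sin\theta)^T=(-s'',p'')^T$. Thus $BO_\theta^T$ has again the form \eqref{eq:B5}, with $(p,s)$ replaced by $J_\theta(p,s)$.

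Next I left-multiply by $U_\theta=J_{\theta/2}$, which acts on each column. On the $x$-column this gives $J_{\theta/2}J_\theta(p,s)^T=J_{3\theta/2}(p,s)^T$. The $z$-column is $J_{\pi/2}$ applied to the $x$-column. Since $J_{\theta/2}$ commutes with $J_{\pi/2}$, the $z$-column is carried to $(-s',p')^T$, and the zero column stays zero. Hence $B'$ has the form \eqref{eq:B5} with $(p',s')^T=J_{3\theta/2}(p,s)^T$, and for real $p,s$ all entries remain real.

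Finally, $H'=\LL(A')^\perp$ is already covered by the general equivariance remark preceding the lemma, so nothing further is needed.
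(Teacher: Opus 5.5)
Your proposal is correct and follows essentially the paper's proof. The covariance is checked, as you do, by conjugating $\sigma_1,\sigma_2,\sigma_3$ by $U_\theta$; the transformation law composes the physical rotation by $\theta$ with the spin rotation by $\theta/2$ on $(p,s)$, but the paper writes $A_1+iA_2=(p+is)(x+iz)$ and collects the phases $e^{i\theta}$ and $e^{i\theta/2}$, whereas your column-by-column use of $J_{\theta/2}J_\theta=J_{3\theta/2}$ is the same calculation in real form (and, being linear, does not even need $p,s$ real).
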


\begin{proof}
The covariance identity is the standard spin lift of a rotation about the $y$-axis; it is verified by expanding $U_\theta\sigma_\alpha U_\theta^*$ for $\alpha=1,2,3$ ($\sigma_2$ is fixed, and $\sigma_1,\sigma_3$ are rotated into each other by the angle $\theta$).  For the second statement write the components of $A=Bx$ as $A_1+iA_2=(p+is)(x+iz)$ and $A_1-iA_2=(p-is)(x-iz)$, which is exactly the content of \eqref{eq:B5} for real $p,s$.  Under $x\mapsto O_\theta^Tx$ one has $x+iz\mapsto e^{i\theta}(x+iz)$, and $U_\theta$ acts on $(A_1,A_2)$ by $A_1+iA_2\mapsto e^{i\theta/2}(A_1+iA_2)$.  Hence $p+is\mapsto e^{3i\theta/2}(p+is)$, which is the displayed rotation.
\end{proof}

The transformation of Lemma~\ref{lem:spin} preserves the relation $H=\LL A^\perp$, by the equivariance recorded before \eqref{eq:B5}, and it moves the design to a congruent design.  Choose $\theta$ such that the transformed pair equals $(\sqrt{p^2+s^2},\,0)$. Dividing both $A$ and $H$ by this positive number preserves the alias relation and the factorisation $\FF=R\LL$ (both sides scale quadratically). We reach the canonical form
\begin{equation}\label{eq:Acanon}
A=\begin{pmatrix}x\\z\end{pmatrix}.
\end{equation}
Equation \eqref{eq:HLA} then gives
\begin{equation}\label{eq:Hcanon}
H=\begin{pmatrix}x^2-ixy-z^2\\ -z(2x+iy)\end{pmatrix}
\end{equation}
and
\begin{equation}\label{eq:Rcanon}
R=\det(A,H)=z^3-3x^2z=z(z^2-3x^2).
\end{equation}

\section{The final moment contradiction}\label{sec:moment}

Every node lies on $R=0$. At a node with $A\ne0$, the relation $A=\zeta H$ determines the phase.

If $z=0$, then \eqref{eq:Hcanon} gives $H=(x(x-iy),0)^T$, and hence, using $x^2+y^2=1$,
\begin{equation}\label{eq:zeta1}
\zeta=\frac1{x-iy}=x+iy .
\end{equation}
If $z=\pm\sqrt3\,x$, then $H=-(2x+iy)A$, and $4x^2+y^2=1$, so
\begin{equation}\label{eq:zeta2}
\zeta=-\frac1{2x+iy}=-2x+iy .
\end{equation}
Thus at every node with $A\ne0$,
\begin{equation}\label{eq:Imzeta}
\Im\zeta_i=y_i .
\end{equation}
If $A=0$, then $x=z=0$ and $y_i^2=1\ge(\Im\zeta_i)^2$. Therefore
\begin{equation}\label{eq:ineq}
\sum_{i=1}^{11}y_i^2\ge\sum_{i=1}^{11}(\Im\zeta_i)^2 .
\end{equation}
The spherical $2$-design identity gives
\begin{equation}\label{eq:y2}
\sum_iy_i^2=\frac{11}3 .
\end{equation}
On the other hand, $|\zeta_i|=1$ and $\sum_i\zeta_i^2=0$ imply
\begin{equation}\label{eq:Im2}
\sum_i(\Im\zeta_i)^2=\frac{11}2 .
\end{equation}
Equations \eqref{eq:ineq}--\eqref{eq:Im2} give $11/3\ge11/2$, a contradiction. This completes the proof of Proposition~\ref{prop:exceptional}.

\begin{proof}[Proof of Theorem~\ref{thm:eleven}]
Under the standing assumption, Sections~\ref{sec:circle}--\ref{sec:hermitian} produce the nonzero real harmonic cubic $R$ and the Hermitian quartic matrix $\FF$.  Either some scalar compression $f_v$ is not divisible by $R$, which Proposition~\ref{prop:generic} excludes, or all of them are, which Proposition~\ref{prop:exceptional} excludes.  Hence no eleven-point $4$-design exists on $\Sph{2}$.
\end{proof}

\section{The exact minimum on the two-sphere}\label{sec:minimum}

For completeness, the tight cardinality nine also has a short kernel proof.

\begin{lemma}\label{lem:nine}
There is no nine-point spherical $4$-design on $\Sph{2}$.
\end{lemma}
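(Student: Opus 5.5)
Since $D_2(3)=9$, a nine-point $4$-design on $\Sph{2}$ is tight (corank zero), and I would prove the lemma with a short kernel computation.

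First, let $X$ be such a design. Then $N=D_2$, so the normalised evaluation matrix $E$ is a square isometry. Hence $P=EE^{T}=I$, and the entries $P_{xy}=K(x\cdot y)/9$ give
\[
 K(x\cdot y)=0\qquad(x\ne y),
\]
with $K(t)=\tfrac32(5t^2+2t-1)$ from \eqref{eq:K2d3}. (Equivalently, this is Theorem~\ref{thm:finite-defect}(a) with the rank-zero projection $Q=0$.) So every inner product between distinct nodes is a root of $5t^2+2t-1$, namely
\[
 \alpha,\beta=\frac{-1\pm\sqrt6}{5}.
\]
Both roots are irrational and conjugate over $\Q$.

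Second, fix a node $x$ and let $k_\alpha,k_\beta$ be the numbers of other nodes at inner products $\alpha,\beta$ with $x$. Summing $p(x\cdot y)$ over $y\in X$ for $p=1$ and $p=t$ and using the design property gives
\[
 1+k_\alpha+k_\beta=9,\qquad 1+k_\alpha\alpha+k_\beta\beta=0 .
\]
Take the irrational part of the second equation. The coefficient of $\sqrt6$ is $(k_\alpha-k_\beta)/5$, so $k_\alpha=k_\beta$. The rational part then reads $1-\tfrac15(k_\alpha+k_\beta)=0$, so $k_\alpha+k_\beta=5$. This contradicts $k_\alpha+k_\beta=8$, and no such design exists. (This is the Galois-constancy mechanism of Lemma~\ref{lem:galois} in the corank-zero setting.)

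As a backup, I would use the second moment. It gives $1+k_\alpha\alpha^2+k_\beta\beta^2=9/3=3$, which also contradicts the first-moment equations. There is no real obstacle here. The only step to check is that tightness forces $P=I$ exactly, and this is immediate because $E^{T}E=I_{D_2}$ with $E$ square.
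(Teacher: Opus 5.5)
Your proof is correct and follows the paper's argument. Tightness forces $P=I$, hence $K(x\cdot y)=0$ for distinct nodes, and the zero-centroid equation at a fixed node then has no integer solution. The only difference is bookkeeping: the paper substitutes $k_\beta=8-k_\alpha$ and solves to get $k_\alpha=4+\sqrt6/4$, whereas you compare rational and $\sqrt6$ parts.
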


\begin{proof}
If $|X|=9$, the degree-two evaluation projection is the identity. Hence for distinct nodes $K(t_{ij})=0$, with $K$ from \eqref{eq:K2d3}. The two possible inner products are
\[
a,b=\frac{-1\pm\sqrt6}5 .
\]
Fix $x_i$, and let $k$ be the number of the other eight nodes having inner product $a$ with $x_i$. Since a $4$-design has zero centroid,
\[
1+ka+(8-k)b=0 .
\]
Thus $k=4+\sqrt6/4$, not an integer.
\end{proof}

The regular icosahedron is a spherical $5$-design and hence a spherical $4$-design with twelve points \cite{BB09,BD1,DGS}.  Combining Lemma~\ref{lem:nine} ($N=9$), Corollary~\ref{cor:ten} ($N=10$; independently Theorem~\ref{thm:sign-obstruction} with $(d,m)=(3,2)$) and Theorem~\ref{thm:eleven} ($N=11$) with the Fisher bound $N\ge9$ gives the exact value and proves Theorem~\ref{thm:main}.

\begin{theorem}[Exact minimum]\label{thm:exact}
The minimum cardinality of an equal-weight spherical $4$-design on $\Sph{2}$ is
\[
\boxed{N_4(\Sph{2})=12}\,.
\]
\end{theorem}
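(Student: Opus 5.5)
The statement follows from results already in place, so the proof is an assembly with two parts: a lower bound $N\ge12$ and a matching upper bound.

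For the lower bound, let $X\subset\Sph{2}$ be an equal-weight spherical $4$-design. By Definition~\ref{def:corank} with $d=3$, $m=2$, the Fisher-type bound gives $|X|\ge D_2(3)=9$. It then suffices to exclude each of $|X|\in\{9,10,11\}$, and each corresponds to a different corank.
\begin{itemize}
\item Corank zero, $|X|=9$: excluded by Lemma~\ref{lem:nine}. This is the integrality argument on the two admissible inner products $(-1\pm\sqrt6)/5$ combined with the zero centroid.
\item Corank one, $|X|=10=D_2+1$: excluded by Corollary~\ref{cor:ten}, the case $d=3$ of Theorem~\ref{thm:strength4}. Independently, Theorem~\ref{thm:sign-obstruction} with $(d,m)=(3,2)$ applies, since $3\le d\le m+1=3$; it forces $A_{5,1}<0$, contradicting the nonnegativity in \eqref{eq:signed-schoenberg}.
\item Corank two, $|X|=11$: excluded by Theorem~\ref{thm:eleven}. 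That proof goes through the Gale circle and the harmonic cubic $R$, then splits into the generic branch (Proposition~\ref{prop:generic}) and the exceptional Pauli branch (Proposition~\ref{prop:exceptional}).
\end{itemize}
Hence every $4$-design on $\Sph{2}$ has at least twelve points.

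For the upper bound, the twelve vertices of the regular icosahedron form a spherical $5$-design, so in particular a $4$-design, and they are distinct unit vectors. The minimum is therefore at most $12$, and $N_4(\Sph{2})=12$.

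All the substantive difficulty lies in the $N=11$ case, which is already handled by Theorem~\ref{thm:eleven}. At this stage the only thing to check is that the case split covers every cardinality: the Fisher bound rules out everything below nine, and the three lemmas cover exactly nine, ten and eleven.
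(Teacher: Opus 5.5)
Your proposal is correct and matches the paper's own assembly: the Fisher bound $N\ge9$, Lemma~\ref{lem:nine} for $N=9$, Corollary~\ref{cor:ten} for $N=10$ (with Theorem~\ref{thm:sign-obstruction} at $(d,m)=(3,2)$ as the independent second exclusion), Theorem~\ref{thm:eleven} for $N=11$, and the icosahedron as a $5$-design attaining $12$. Your identification of the residue obstruction as $A_{5,1}<0$, coming from $q_2(3)=-3$, is also the $m+3$ coefficient the paper uses.
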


\section{Stability and open problems}\label{sec:further}

Let $(Y_{\ell r})$ be an orthonormal harmonic basis on $\Sph{2}$ and define the degree-four defect energy of an ordered $N$-tuple $X=(x_1,\ldots,x_N)$ by
\[
 \Phi_4(X)=\sum_{\ell=1}^4\sum_{r=1}^{2\ell+1}
 \left|\frac1N\sum_{j=1}^NY_{\ell r}(x_j)\right|^2.
\]
It vanishes exactly when the ordered tuple, with multiplicities, integrates every polynomial of degree at most four.

\begin{corollary}[Qualitative stability]\label{cor:stability}
For every $1\le N\le11$ there is a constant $c_N>0$ such that every ordered equal-weight $N$-point configuration on $\Sph{2}$, repetitions allowed, satisfies $\Phi_4(X)\ge c_N$.
\end{corollary}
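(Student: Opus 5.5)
The plan is a compactness argument combined with the nonexistence results already proved.

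First, for fixed $N$, view $\Phi_4$ as a function on the compact space $(\Sph{2})^N$ of ordered $N$-tuples, repetitions allowed. Each $Y_{\ell r}$ is a polynomial, so $\Phi_4$ is continuous. It therefore attains its infimum at some tuple $X^\ast$. It suffices to show that this minimum is positive, that is, that no ordered $N$-tuple with $N\le11$, repetitions allowed, satisfies $\Phi_4=0$. We then take $c_N=\min\Phi_4>0$.

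The real content is the treatment of repeated nodes, since Theorem~\ref{thm:main} concerns distinct nodes with equal weights. Suppose $\Phi_4(X)=0$. Grouping equal points gives a weighted $4$-design with distinct support $Y$ of size $M\le N\le 11$ and positive rational weights $k_y/N$. I would rerun the Fisher argument with weights: the weighted sampled inner product is exact on products of quadratics. So evaluation on $\Pol_2$ is injective on $Y$, which forces $M\ge9$, and hence $N\ge9$. That leaves a bounded number of multiplicity patterns $(N,M)$ with $9\le M\le N\le 11$.

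If $M=N$, the design is equal-weight with distinct nodes, and Theorem~\ref{thm:main} excludes it. If $M<N$, some point carries weight at least $2/N$, while the others carry weight at least $1/N$. For this case I would use the weighted analogue of the projection argument in Proposition~\ref{prop:cb-hilbert}. With weights $w_y$, the matrix $P$ with entries $\sqrt{w_xw_y}K(x\cdot y)$ is a projection of rank $9$. Its diagonal entries $w_yK(1)=9w_y$ must be at most $1$, so $w_y\le1/9$. A repeated point has $w_y\ge2/N\ge2/11>1/9$, which is a contradiction. This kills every case with repetition at once; it is the same bound as $P_{xx}\le1$ in Theorem~\ref{thm:finite-defect}(a).

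The main obstacle I expect is making sure this diagonal bound transfers cleanly to the weighted setting. Concretely, I need exactness of $\sum_y w_y p(y)\overline{q(y)}$ on $\Pol_2$, so that $E^*E=I$ with $E_{y\alpha}=\sqrt{w_y}\phi_\alpha(y)$, and hence that $EE^*$ is an orthogonal projection with diagonal $w_yK(1)$. Once this holds, the compactness argument gives the constants $c_N$, non-explicitly.
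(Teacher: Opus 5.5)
Your proof is correct and follows essentially the paper's route: compactness plus Theorem~\ref{thm:exact}, with repeated nodes excluded by a projection inequality that forces $N\ge18$. The only difference is packaging. The paper keeps the unreduced $N\times9$ evaluation matrix and uses the $2\times2$ principal minor of $Q=I-P$ at two coinciding rows, whereas you merge repeated points into weights and use $P_{yy}=9w_y\le1$; the two bounds are equivalent.
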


\begin{proof}
Suppose $\Phi_4(X)=0$.  Let $E$ be the $N\times9$ normalised evaluation matrix of an orthonormal basis of $\Pol_2(\Sph{2})$.  Then $E^*E=I_9$, so $N\ge9$.  Put $P=EE^*$ and $Q=I-P$.  Since the reproducing kernel equals $9$ on the diagonal,
\[
 P_{ii}=\frac9N,
 \qquad
 Q_{ii}=\frac{N-9}{N}.
\]
If $x_i=x_j$ with $i\ne j$, the corresponding rows of $E$ coincide, so $Q_{ij}=-9/N$.  Positivity of the corresponding $2\times2$ principal minor gives
\[
 0\le\frac{(N-9)^2-81}{N^2},
\]
which for $N\ge9$ forces $N\ge18$.  Thus a zero with $N\le11$ has distinct nodes and would contradict Theorem~\ref{thm:exact}.  Compactness of $(\Sph{2})^N$ and continuity of $\Phi_4$ give a positive minimum.
\end{proof}

Three problems remain especially natural.  First, determine whether spherical $4$-designs on $\Sph{2}$ exist with $13$ or $15$ nodes.  Second, settle corank two in dimensions $d\ge4$; Theorem~\ref{thm:corank-two-identities} and the forced socle identify the structure that any example must carry.  Third, classify corank one in the complementary range $m\ge4$, $d\ge m+2$, where the residue sign obstruction disappears (Remark~\ref{rem:complementary}) and the arithmetic of $K_m^{(d)}\pm1$ becomes decisive.

Numerical searches may guide these problems, but no finite truncation is used as a theorem here.  In particular, feasibility of finitely many Schoenberg inequalities does not by itself prove feasibility of the full infinite positivity programme.

\appendix

\section{Corank-one coefficient extraction}\label{app:identities}

This appendix records the finite algebra behind the closed forms \eqref{eq:A-m1}--\eqref{eq:A-m3}.

\label{app:residue-coefficients}
In the notation of Theorem~\ref{thm:residue}, put
\[
 \mathcal R(s)=
 \frac{(1+\gamma_1s+\gamma_2s^2)(1+\pi_2s^2)}
 {(1-s)(1+\kappa_1s+\kappa_2s^2)^2}.
\]
The coefficients needed for $j=1,2,3$ are
\begin{align*}
 [s^0]\mathcal R&=1,\\
 [s^1]\mathcal R&=1+\gamma_1-2\kappa_1
 =\frac{(d-2)(d+m-1)}{d(d+2m-2)},\\
 [s^2]\mathcal R&=\gamma_2+\pi_2+1+3\kappa_1^2-2\kappa_2
 +\gamma_1-2\kappa_1\gamma_1-2\kappa_1\\
 &=\frac{(d-2)(d+m-1)(d^2+dm-2d-2m^2-2m)}
 {d(d+2m-2)^2(d+2m+2)}.
\end{align*}
Together with \eqref{eq:lead-ratio}, these are exactly \eqref{eq:A-m1}--\eqref{eq:A-m3}.

\section{Computational certificates for Section~\ref{sec:higher}}\label{app:certificates}

\subsection{Rational roots at strength six: proof of Lemma~\ref{lem:rational-roots}}\label{app:rational-roots}
Write $g=g_d$. Every real root of $g$ lies in $(-1,1)$. Indeed,
\[
 g'(t)=3d\bigl((d+4)t^2+2t-1\bigr)>0\qquad(|t|\ge1),
\]
while
\[
 g(-1)=-d^2+2d-3<0,\qquad g(1)=d^2+4d-3>0.
\]
Thus $g(t)<0$ for $t\le-1$ and $g(t)>0$ for $t\ge1$. By the rational root theorem, a rational root $t=\varepsilon/s$ in lowest terms has $\varepsilon\mid3$, so $\varepsilon\in\{\pm1,\pm3\}$, $s\ge1$, $\gcd(|\varepsilon|,s)=1$, and $|\varepsilon/s|<1$. Substituting and clearing $s^3$ gives
\[
E_\varepsilon(d,s):=d(d+4)\varepsilon^3+3d\varepsilon^2s-3d\varepsilon s^2-3s^3=0,
\]
a quadratic in $d$ with integer coefficients whose discriminant $\Delta_\varepsilon(s)$ must be a perfect square:
\begin{align*}
\Delta_{\pm1}(s)&=9s^4\mp6s^3-15s^2\pm24s+16,\\
\Delta_{\pm3}(s)&=81s^4\mp162s^3-1215s^2\pm5832s+11664.
\end{align*}
For large $s$, each discriminant lies strictly between consecutive squares:
\begin{align*}
(3s^2-s-3)^2&<\Delta_{+1}(s)<(3s^2-s-2)^2 &&(s\ge6),\\
(3s^2+s-3)^2&<\Delta_{-1}(s)<(3s^2+s-2)^2 &&(s\ge9),\\
(9s^2-9s-72)^2&<\Delta_{+3}(s)<(9s^2-9s-71)^2 &&(s\ge255),\\
(9s^2+9s-73)^2&<\Delta_{-3}(s)<(9s^2+9s-72)^2 &&(s\ge250).
\end{align*}
The corresponding lower and upper differences are, respectively,
\[
\begin{array}{c|cc}
\varepsilon & \text{lower difference} & \text{upper difference}\\ \hline
+1 & 2s^2+18s+7 & 4s^2-20s-12\\
-1 & 2s^2-18s+7 & 4s^2+20s-12\\
+3 & 4536s+6480 & 18s^2-4554s-6623\\
-3 & 18s^2-4518s+6335 & 4536s-6480
\end{array}
\]
and are positive in the displayed ranges.

The remaining finite ranges admit the following exact certificate. Only reduced fractions are retained, so for $|\varepsilon|=3$ we impose $3\nmid s$.
\[
\begin{array}{c|c|c|c}
\varepsilon & \text{admissible }s & \text{square cases} & \text{corresponding }d\\ \hline
+1 & 2\le s\le5 & s=2,\ \Delta_{+1}=100 & d=-4,6\\
-1 & 2\le s\le8 & s=2,\ \Delta_{-1}=100 & d=2,12\\
+3 & 4\le s\le254,\ 3\nmid s & \text{none} & \text{none}\\
-3 & 4\le s\le249,\ 3\nmid s & s=4,\ \Delta_{-3}=0 & d=8/3
\end{array}
\]
(The additional square cases at $s=6$ for $\varepsilon=\pm3$ represent the non-reduced fractions $\pm3/6=\pm1/2$ and have already appeared in the first two rows.) Hence the only pairs $(d,t)$ with $d\ge2$ an integer and $t\in\Q$ a root of $g_d$ are
\[
(d,t)\in\{(2,-\tfrac12),\ (6,\tfrac12),\ (12,-\tfrac12)\},
\]
which proves Lemma~\ref{lem:rational-roots}.  The exact enumeration is reproduced by the script \path{corank1_exact.py} (Appendix~\ref{app:repro}).

\subsection{Factorisation ranges}\label{app:factorisation-ranges}
The companion script \path{corank1_exact.py} constructs $K_m^{(d)}$ over $\Q$ from its normalised Jacobi-polynomial formula and factors $K_m^{(d)}\pm1$ exactly.  For $3\le d\le300$ with $2\le m\le12$, for $3\le d\le60$ with $13\le m\le24$, and for $d=3$ with $2\le m\le40$, the ranges contain respectively $973$, $132$ and $20$ parity-eligible pairs.  In every case at least one of $K_m^{(d)}+1$ and $K_m^{(d)}-1$ is irreducible over $\Q$.  The same script reproduces the finite discriminant certificate of Appendix~\ref{app:rational-roots} and verifies the companion factorisations at $d=6$ and $d=12$.  This is a computation, not a theorem; it is recorded to delimit the open problem stated in Remark~\ref{rem:what-remains} and Section~\ref{sec:further}.

\section{Reproducibility statement}\label{app:repro}

The proofs are mathematical and independent of software.  The ancillary archive contains pinned requirements, deterministic logs, checksums, and the following exact scripts:
\begin{enumerate}
\footnotesize
\item \path{verify_finite_defect_global.py}, for the degree-two kernel, strength-four spectral identities, nine-point obstruction, Gale coupling, and stability collision minor;
\item \path{corank1_exact.py}, for the factorisation ranges, the finite strength-six certificate, and the companion cubics at $d=6,12$;
\item \path{spherical_design_alias_closure_verify.py}, for the determinant and tangent identities in the generic branch;
\item \path{verify_two_alias_pauli_branch.py}, for the Pauli determinant, harmonic-spinor equations, trace and reality identities, spin-$3/2$ covariance, and canonical phases;
\item \path{verify_higher_dimensional_structure.py}, for the Hilbert vector, forced-socle dimension, and heptagon power sums;
\item \path{verify_finite_defect_all_corank.py}, for the binomial/kernel ledger behind the antipodal bound and the all-corank dimension identities;
\item \path{corank1_residue.py}, for the symbolic Laurent coefficients and exact polynomial-remainder checks in Theorem~\ref{thm:residue};
\item \path{corank1_residue_explore.py}, for the exact evaluation of \eqref{eq:residue-formula} in Remarks~\ref{rem:polygon-check} and~\ref{rem:complementary}.
\end{enumerate}
Running \texttt{python run\_all.py} executes the complete exact suite.  The scripts are reproducibility aids, not substitutes for the proofs.  No numerical optimisation or finite linear-programming result is used as a theorem in this version.

\end{document}